\documentclass[12pt]{article}
\usepackage{amsfonts,amsthm,amsmath,amssymb,amscd,graphicx}
\usepackage{epsfig}
\usepackage{color}

\textwidth 17.5 cm \textheight 9.5in \topmargin -0.5in
\oddsidemargin -0.5in \evensidemargin -0.5in
\theoremstyle{plain}
\newtheorem{lemma}{Lemma}[section]
\newtheorem{theorem}{Theorem}

\newtheorem{lm}{Lemma}
\newtheorem{prop}[lemma]{Proposition}

\theoremstyle{definition}
\newtheorem{df}{Definition}

\newtheorem{remark}{Remark}[section]

\title{
On dynamics and bifurcations of area-preserving maps with homoclinic
tangencies.} 
\author{
Amadeu Delshams$^{(1)}$, Marina Gonchenko$^{(2)}$
and Sergey Gonchenko$^{(3)}$
} 
\date{}

\begin{document}
\maketitle
\begin{center}
(1) Departament de Matem\`atica Aplicada I, Universitat
Polit\`ecnica de Catalunya,

Diagonal 647, 08028 Barcelona (Spain)

{\tt Amadeu.Delshams@upc.edu}

(2) Institut f\"ur Mathematik,
Technische Universit\"at Berlin\\
Strasse des 17. Juni 136, 
10623 Berlin (Germany)\\

{\tt  gonchenk@math.tu-berlin.de}

(3) Institute of Applied Mathematics and Cybernetics,
Nizhny Novgorod State University

Ulyanova St 10, 603005 Nizhny Novgorod (Russia)

{\tt gosv100@uic.nnov.ru}
\end{center}

\begin{abstract}
 We study bifurcations of area-preserving maps, both orientable (symplectic) and non-orientable, with quadratic homoclinic tangencies.
We consider one
and two parameter general unfoldings and establish results related to the appearance of elliptic
periodic orbits. In particular, we find conditions {for} such maps {to} have infinitely many generic (KAM-stable) elliptic periodic orbits of all
successive periods
starting
at some number.
\end{abstract}

\section{Introduction.}
The present paper is devoted to the study of bifurcations of area-preserving maps (APMs) with quadratic homoclinic tangencies. The case of
two-dimensional symplectic {(area-preserving and orientable)} maps  was analyzed in the papers~{\cite{MR97,GS01,GS03,Dua08,GG09}}.
Closely related bifurcation problems were considered in the papers~\cite{B87,BSh89,Ler91,Ler00,LerK00}
where bifurcations of conservative flows
with a homoclinic loop of a saddle-focus equilibrium were studied. In the works~\cite{B87,BSh89} the case of
three-dimensional
divergence-free flows was considered, while in ~\cite{Ler91,Ler00,LerK00} {the} dynamical behaviour and bifurcations in two degrees of freedom
Hamiltonian systems were analyzed.

In the present paper {we do not restrict ourselves to symplectic maps, but we also consider}
the new case of area-preserving and
non-orientable maps. First, we give a classification of APMs with quadratic homoclinic tangencies and, further, prove certain theorems on the existence of infinitely many bifurcations (cascades)  leading to the appearance of generic (KAM-stable) elliptic periodic orbits.

We recall that for dissipative systems, the related
problems are quite traditional and
many results obtained here
{are of fundamental importance in} the theory of dynamical chaos. One of such
results, known as {\em theorem on cascade of periodic sinks}, goes back to the famous papers of
Gavrilov and Shilnikov~\cite{GaS73} and Newhouse~\cite{N74}, see also~\cite{G83,PV94}.  This
theorem  {deals with}
the so-called sectionally dissipative case, i.e., when a homoclinic tangency is
associated to a saddle fixed (periodic) point with multipliers $\lambda_1,...,\lambda_n, \gamma$ such that $|\lambda_i|<1, |\gamma|>1$  and
the \emph{saddle value} $\sigma \equiv  |\gamma|\cdot \max\limits_i|\lambda_i|$ is less than 1.
In this case,
bifurcations of the homoclinic tangency lead to the appearance of asymptotically stable  periodic
orbits (periodic sinks). Moreover, in any one parameter general unfolding $f_\mu$, such orbits exist for values of
$\mu$ {forming} an infinite sequence (cascade) of intervals that do not intersect and accumulate to
$\mu=0$.

A very nontrivial extension of this (quite simple) result was made by S.~Newhouse~\cite{N79}, who proved that, for any such one parameter
general unfolding $f_\mu$, there exist intervals  in which there are dense values of $\mu$ such that the corresponding diffeomorphism
$f_\mu$ has a homoclinic tangency. Together with the theorem on cascade of periodic sinks, this implies that the values of $\mu$
 {where} $f_\mu$
possesses {\em infinitely many periodic sinks} {form} residual subsets of these intervals, i.e., subsets {which are}
the intersection of a countable number of open and dense sets. Thus, this {\em Newhouse phenomenon} should be
generic for chaotic sectionally dissipative systems allowing homoclinic tangencies.
Later, the existence of Newhouse regions (where systems with homoclinic tangencies are dense) was proved for any dimension,
\cite{PV94,GST93b,Rom95}, as well as for conservative systems,~{\cite{Dua99, D00}, see also~\cite{MR97a}}.

\begin{figure} [htb]
\centerline{\epsfig{file=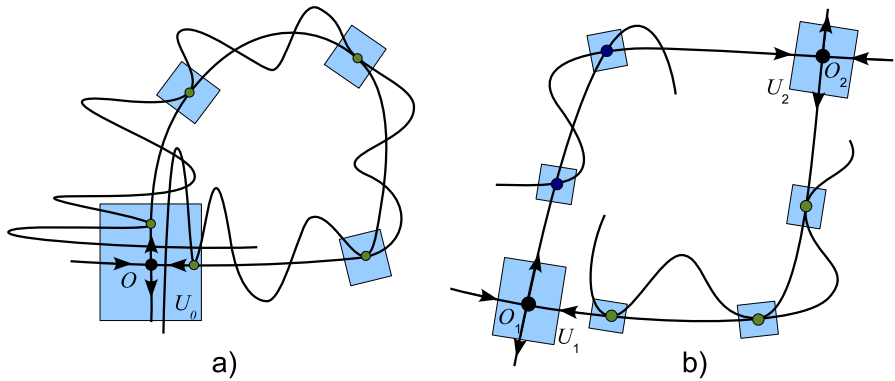,
height=5.5cm }} \caption{{\footnotesize Examples of planar diffeomorphisms  with (a) a (quadratic)
homoclinic tangency at the points of some homoclinic orbit; (b) a nontransversal heteroclinic cycle
containing two saddle fixed points and two selected heteroclinic orbits one of which is nontransversal.
Here it is shown also small neighbourhoods of (a) the nontransversal homoclinic orbit and (b) the heteroclinic cycle which look as a union of small
rectangles. }}
\label{fig:DGG1}
\end{figure}

In principle, the theorem on cascade of periodic sinks admits various extensions even in the case of two-dimensional diffeomorphisms,
see e.g.~\cite{GS07}.

Thus, the main bifurcations of quadratic homoclinic tangencies with $\sigma=1$
were studied
in~\cite{GG00,GG04} where the so-called {\em generalized H\'enon map}
$$
\bar x = y, \; \bar y = M_1 - M_2  x - y^2 + a_1 xy + a_2 y^3
$$
was derived as the normal form for the first return maps. In this map the parameters $M_1$ and $M_2$
are, in fact, the rescaled initial parameters, which control the splitting of the invariant
manifolds and the saddle value,  respectively.
 {The small coefficients $a_1$ and $a_2$
are important: if $a_1\neq 0$, then
the Andronov-Hopf
bifurcation of a fixed point (with multipliers $e^{\pm i\psi}$)  is non-degenerate;  if
$a_1\neq 0$ and  $a_2\neq 0$, then  bifurcations at the strong resonant case $\psi = \pi/2$ are non-degenerate, see
\cite{GKM05,MGon05}.}
Naturally, in the area-preserving case, we have that $|M_2|\equiv 1$,
$a_1\equiv 0$, and
$a_2$ is important again.

Another important extension of the theorem on cascade of periodic sinks concerns the case of
two-dimensional diffeomorphisms having nontransversal heteroclinic cycles, i.e., there are several
saddle fixed (periodic) points which form a cycle due to intersections of their invariant manifolds,
and some of these intersections are nontransversal (see Figure~\ref{fig:DGG1}~b).
If the saddle values of all these  {(saddle)} points are less than 1 (or greater than 1), then the results look quite similar
to the homoclinic case with {saddle value} $\sigma<1$ (respectively, $\sigma>1$), although the intervals of existence of
periodic sinks (sources) can
intersect here~\cite{GMS01}. However,
if the heteroclinic cycle contains at least two saddle points such that $\sigma_1<1$ and $\sigma_2>1$,
then, as it was shown in~\cite{GST97}, a new dynamical phenomenon {called} {\em mixed dynamics} occurs. The
essence of this phenomenon consists in the fact that, first, global bifurcations
of such systems lead to the appearance of infinitely many coexisting hyperbolic periodic points of all
possible types,
i.e., saddle, stable and completely unstable
 (as well as stable and unstable invariant circles~\cite{GSSt02,GSSt06});
and, second, periodic orbits {of one type} are not separated {from the ones of another type}, i.e., the closures
of the sets of  periodic orbits of different types have nonempty intersections.
Note also that
the mixed dynamics
is a generic phenomenon~\cite{GST97}, i.e.,
it takes place on residual subsets in some open (Newhouse) regions.
Especially, this is significant for reversible systems for which the (reversible) mixed dynamics {gives rise} to the
coexistence of {\em infinitely many saddle, attracting, repelling and elliptic periodic orbits}
appearing generically when symmetric homoclinic or heteroclinic structures are involved,
see~\cite{LS04,DGGTS13}. Note that the phenomenon of reversible mixed dynamics is frequently observed in applications,
for example, in a model of coupled rotators~\cite{PT02}, in mechanical models such as  nonholonomic models of a Celtic stone~\cite{GGK13}
and a rubber-body~\cite{K13}, etc.

Concerning {the corresponding} results in the conservative
case, we mention, above all, the well-known theorem of S.~Newhouse~\cite{N77} on the {emergence} of
1-elliptic periodic orbits ({with} only one pair of multipliers {on the unit circle} $e^{\pm i\varphi}$
with $\varphi\neq 0,\pi$) under bifurcations of homoclinic tangencies of
multidimensional symplectic maps. Note that the Newhouse theorem
does not give
answer whether these 1-elliptic orbits are generic.\footnote{The birth of 2-elliptic generic
periodic orbits was proved in~\cite{GST98,GST04} for the case of four-dimensional symplectic maps
with homoclinic tangencies to saddle-focus fixed points. Recall that a periodic orbit is 2-elliptic
if it has two pairs of multipliers $e^{\pm i\phi}$ and $e^{\pm i\psi}$ with $\phi\neq\psi$ and
$\phi,\psi\neq\{0,\pi\}$. The genericity means, in particular, that
$\phi,\psi\neq\{\pi/2,2\pi/3\},\phi\neq 2\psi,\phi\neq 3\psi$, etc.} However, this fact is
{very} important in the two-dimensional case where an 1-elliptic point is elliptic and the
genericity means the KAM-stability of such point. Such a
problem was considered in~\cite{B87,BSh89} when studying bifurcations of three-dimensional
divergence free flows with a homoclinic loop of a saddle-focus equilibrium, and in~\cite{MR97}
{when}  studying bifurcations of two-dimensional symplectic maps with quadratic homoclinic
tangencies. However, a more or less complete description of related bifurcation diagrams (including
the questions about the coexistence of elliptic points of different periods) was not obtained in these papers.
This was done in~\cite{GG09} for the symplectic case.  In this
connection, we note that {in~\cite{GS01,GS03}} it was discovered  that APMs
with quadratic homoclinic tangencies (at $\mu=0$) can possess infinitely many coexisting elliptic
periodic orbits, and, moreover, these orbits have successive periods $k_0,k_0+1,\dots$, starting
at some integer $k_0$. Thus, such APMs {display} the {\em  phenomenon of global resonance}
leading to strict
ordering even in the   structure of elliptic points.\footnote{It is interesting to note that maps
with infinitely many generic elliptic periodic points are dense in the space of {APMs}
with nontransversal heteroclinic cycles,~\cite{GS97,GS00}.
Moreover, the conditions for the existence of such orbits are closely related to certain arithmetic
properties of some numerical invariants ($\Omega$-moduli), whose set includes even
the first Birkhoff coefficients from the normal forms of local maps near saddle points.
{Note also that in \cite{Dua94} it was proved that in the standard map family
there exists a residual set of parameter values for which the map has infinitely many elliptic islands accumulating to
a locally maximal hyperbolic set.
An analogous result for the so-called cyclicity-one elliptic islands was proved recently in~\cite{DeS13}.
}}

In the present paper
the results of~\cite{GG09} and~\cite{GS01,GS03} {are significantly extended}, in particular including into consideration non-orientable
APMs with quadratic homoclinic tangencies. Note that such systems can {be} either planar maps, like the non-orientable conservative
H\'enon map $\bar x =y,\; \bar y = M + x -y^2$, or area-preserving diffeomorphisms on non-orientable {surfaces}.

Our paper is organized as follows.

In {\em Section}~\ref{sec:stofproblem} we state the problem and give the necessary geometric constructions {as well as} the general technical results.
In particular, we formulate, in form of lemmas, several important results on normal forms of  {saddle APMs}
including rather new results (e.g. Lemma~\ref{lemma:FSNF} on the $n$-th order normal form) {for}
the non-orientable case. {In fact, we extend the well-known analytical
Birkhoff-Moser normal form (see formula (\ref{eq:nfBM})) to the finite-smooth case}.

In {\em Section}~\ref{sec3}  we give
a classification of APMs with quadratic homoclinic tangencies according to the type of the semi-local dynamics, i.e., the type of the structure
of the set $N$ of orbits entirely lying in a small neighbourhood $U$ of the contour $O\cup\Gamma_0$, where $O$ is a saddle fixed point and
$\Gamma_0$ is a homoclinic orbit at whose points the manifolds $W^u(O)$ and $W^s(O)$ have a quadratic tangency. Note that $U$ is represented as
a union of a small disk $U_0$ containing the point $O$ and a finite number of disks surrounding those homoclinic points of the orbit $\Gamma_0$ which
do not belong to $U_0$, see Figure~\ref{fig:DGG1}. {Thus}, $U_0$ contains infinitely many points of $\Gamma_0$ lying in $W^s_{loc}(O)$ and
$W^s_{loc}(O)$ and accumulating to $O$. We divide the APMs with quadratic homoclinic tangencies into three classes. In the first class, the set $N$
has always a trivial structure: $N=\{O;\Gamma_0\}$; in the second class, $N$ is nontrivial and allows always a complete description in terms of
the symbolic dynamics,
 see Section~\ref{sec:1-2cl}. In the third class, the structure of $N$ can be both trivial ($N=\{O;\Gamma_0\}$) and nontrivial ($N$ contains
 nontrivial hyperbolic subsets) depending not only on the geometry of the manifolds $W^u(O)$ and $W^s(O)$ near a point of
 homoclinic tangency\footnote{In the case of quadratic homoclinic tangencies such a geometry  is completely {determined} by the signs of
 4 parameters:
 the two multipliers of the point $O$ and {two more} parameters $c$ and $d$ that characterize the mutual position and orientation of the curves $W^u(O)$ and
 $W^s(O)$ near a tangency point, see Section~\ref{sec:globmap}.}, as in the case of tangencies in the first and second classes, but also on other
 invariants of the homoclinic structure. In particular, the structure of $N$ depends on an invariant $\tau$ (see formula~(\ref{eq:tau})) whose
 variation {near $\tau=0$}
(without splitting the tangency) implies that the set $N$ changes the structure.
{See the corresponding
propositions in Section~\ref{sec:3cl}}.

The central part of the paper, {\em Sections}~4, 5 and 6,
is devoted to the study of {the main} bifurcations in parameter families $f_\varepsilon$ of APMs which unfold generally the initial homoclinic
tangency.
First of all, we are interested in bifurcations of the so-called {\em single-round} periodic orbits, i.e., those which pass only once along
the neighbourhood $U(O\cup\Gamma_0)$, see Definition~\ref{definition:p-round}. Every point of such an orbit can be considered as a fixed point of
the corresponding first return map $T_k$ defined in some  {domain}
near a homoclinic point.
In this paper we construct these first return maps as certain compositions $T_k = T_1T_0^k$ of the local map $T_0$ and the global map $T_1$,
where $k$ runs {along} all sufficiently large integer numbers. The local map is, in fact, a conservative saddle map which is defined by orbits of
the diffeomorphism $f_\varepsilon$ on a small neighbourhood (a disk) $U_0\subset U$ containing the point $O_\varepsilon$, thus,
$T_0 = f_\varepsilon |_{U_0}$. The global map $T_1$ is a map acting by the orbits of $f_\varepsilon$ from a small neighbourhood,
say $\Pi^-$, of a homoclinic point, $M^-$,  belonging to $W^u_{loc}\cap U_0$,  to a small
neighbourhood, $\Pi^+$, of another  homoclinic point, $M^+$, belonging to $W^s_{loc}\cap U_0$. Then one can write $T_1 = f_\varepsilon^q |_{\Pi^-}$,
where $q$ is a number such that $M^+= f_0^q(M^-)$, see Figure~\ref{fig:fretm}.

We assume here that the set $\varepsilon$ of the governing parameters include always the parameter $\mu$ of the splitting {between} the manifolds
$T_1(W^u_{loc})$ and $W^s_{loc}$ near the homoclinic point $M^+$. Then we show, see the Rescaling Lemma~\ref{henmain} of
Section~\ref{sec:bifsingleround},   that every first return map $T_k$, for sufficiently large $k$ and small $\mu$, can be written
in the unified rescaled form
\begin{equation}
\bar x = y + o (\lambda^k),\;\; \bar y = M - \nu_1 x - y^2 + \nu_2 \lambda^k y^3 + o(\lambda^k),
\label{eq:1}
\end{equation}
where the rescaled coordinates $(x,y)$ and the parameter $M$ can take values on a ball $\|(x,y,M)\| \leq L_k$, where $L_k\to\infty$ as $k\to\infty$;
$\nu_1$ is the index equal to $+1$ or $-1$ depending on the orientability of the map $T_k$;
$\nu_2$ is some invariant of the homoclinic structure. In fact, the map~(\ref{eq:1}) is a generalized conservative H\'enon map whose bifurcations
are well known. Therefore, we know the bifurcations {that} single-round periodic orbits undergo: the list of these bifurcations coincides
(up to some small details) with the list of bifurcations of fixed points in the map
(\ref{eq:1}), see Sections~\ref{sec:421} and~\ref{sec:422}.  However, this does not mean that we have studied {completely} the homoclinic
bifurcations,
since we need to construct the {\em bifurcation diagram}, which includes not only the list of bifurcations of the first return maps $T_k$, but
also shows a disposition of these bifurcations in the parameter space. Since we are interested in the bifurcations leading to the appearance of
elliptic periodic orbits, first of all, we need to answer the question ``Can elliptic orbits of different periods coexist?''

Our first result on this theme, Theorem~\ref{th:1parcasc} from Section~\ref{sec:fmu}, shows that in
the main case, when $\tau \neq 0$ and {the homoclinic tangency takes place for
$\mu=0$}, in the family $f_\mu$, the intervals of values of $\mu$ corresponding to the
existence of single-round elliptic periodic orbits of period $(k+q)$ (or double-round ones of period $2(k+q)$ when the maps $T_k$ are non-orientable)
are not crossed for different sufficiently large $k$.
However, the ``globally resonant case'' $\tau=0$ is much more interesting. Here, the pointed out intervals can intersect and, moreover, they all
can be {\em nested}, i.e., all the intervals contain the point $\mu=0$. The corresponding results, Theorems~\ref{th:sympinf},~\ref{th:gnorinf}
and~\ref{th:lnorinf}, are presented
in Section~\ref{sec:2parfam} and both formulated and proved in a context of two parameter general unfoldings.

In {\em Section}~\ref{newinv} we prove the invariance of certain quantities which play a
{very} important {role} for the description of the dynamical
phenomena at
the ``global resonance''.

In {\em Section}~\ref{appendix} we prove Lemma~\ref{lemma:FSNF}.

\section{Statement of the problem and preliminary geometric constructions.} \label{sec:stofproblem}

Consider a $C^r$-smooth ($r\geq 3$) area-preserving map $f_0$ satisfying the following conditions.
\begin{itemize}
\item[{\bf A.}]  $f_0$ has a saddle fixed (or periodic) point $O$ with multipliers
$\lambda$ and $\gamma$, where $0<|\lambda|<1<|\gamma|$ and $|\lambda\gamma| = 1$~. Moreover, we
will consider two different
cases: 
\begin{itemize}
\item[{\bf A}.1] the saddle is {\it orientable}, i.e.,  $\lambda=\gamma^{-1}$;
\item[{\bf A}.2] the saddle is {\it non-orientable}, i.e., $\lambda= -\gamma^{-1}$.
\end{itemize}

\item[{\bf B.}] The stable and unstable invariant manifolds of the saddle $O$ have
a quadratic tangency at the points of some homoclinic orbit $\Gamma_0$
(see Figure~\ref{fig:DGG1}(a)).
\end{itemize}

Let ${\cal H}$ be a (codimension one) bifurcation {manifold} composed of area-preserving  $C^r$-maps
close to $f_0$ and such that every map of ${\cal H}$ has a nontransversal
homoclinic orbit close to $\Gamma_0$. Let $f_{\varepsilon}$ be a family of area-preserving
$C^r$-maps that contains the map $f_0$ at $\varepsilon =0$. We suppose that the family depends
smoothly on parameters $\varepsilon = (\varepsilon_1,...,\varepsilon_m)$ and satisfies the
following condition.

\begin{itemize}
\item[{\bf C.}] The family $f_{\varepsilon}$ is transverse to ${\cal
H}$.
\end{itemize}

Let $U$ be a small neighbourhood of the set $O\cup\Gamma_0$. Note that $U$ consists of a small disk
$U_0$ containing the point $O$ and a number of small disks containing those points of $\Gamma_0$
that do not belong to $U_0$ (see Figure~\ref{fig:DGG1}(a)).

\begin{df}
{\em A periodic or homoclinic orbit entirely lying in $U$ is called {\sl p-round}  if it has
exactly $p$ intersection points with any disk of the set $U\backslash U_0$.}
\label{definition:p-round}
\end{df}

\begin{figure}[htb]
\centerline{\epsfig{file=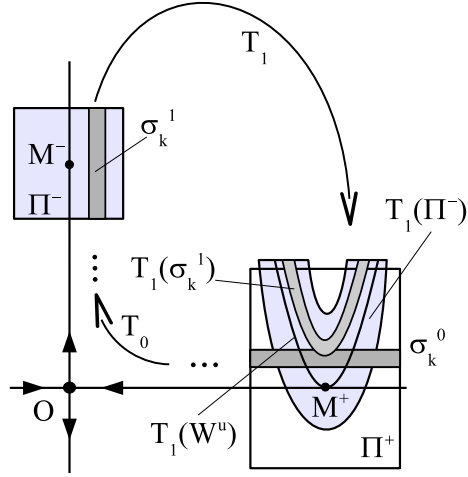, width=7cm }} 
\caption{{\footnotesize{Geometric properties of the local and global maps $T_0$ and $T_1$.} }} \label{fig:fretm}
\end{figure}

In this paper we study bifurcations of {\em single-round ($p=1$) periodic orbits} in the families
$f_\varepsilon$. Note that every point of such an orbit can be considered as a fixed point of the
corresponding \emph{first return map}. Such a map is usually constructed as a superposition $T_k=
T_1T_0^k$ of two maps $T_0\equiv T_0(\varepsilon)$ and $T_1\equiv T_1(\varepsilon)$, see
Figure~\ref{fig:fretm}. The map $T_0$ is called \emph{local map} and it is defined as the
restriction of $f_\varepsilon$ onto $U_0$, i.e., $T_0(\varepsilon)\equiv f_\varepsilon\bigl|{U_0}$.
The map $T_1$ is called \emph{global map} and it is defined as $T_1 \equiv f_\varepsilon^q$  and
acts from a small neighbourhood $\Pi^-\subset U_0$ of some
point $M^-\in W^u_{loc}(O)$ of the orbit $\Gamma_0$ into a neighbourhood $\Pi^+\subset U_0$ of
another
point $M^+\in W^s_{loc}(O)$ of $\Gamma_0$, where $q$ is an integer such that $f_0^q(M^-)=M^+$.
Thus, any fixed point of $T_k$ is a point of a single-round periodic orbit for $f_\varepsilon$ with
period $k+q$. We will study maps $T_k$ for all sufficiently
large integer $k$. Therefore,
it is very important to have good coordinate representations for both
maps $T_0$ and $T_1$.

\subsection{Finite-smooth normal
forms of  {saddle APMs}.} \label{sec:1p1}

The area-preserving map $T_0(\varepsilon)$ has a saddle fixed point $O_\varepsilon$ for all
sufficiently small $\varepsilon$. The simplest form for $T_0$ might be the linear one: $\bar x =
\lambda(\varepsilon) x, \;\;\bar y = \gamma(\varepsilon) y$, where $|\lambda| = |\gamma|^{-1}$,
however, it is non-applicable {since only $C^1$-linearization
can be ensured here}. In the real-analytical case with $\gamma = \lambda^{-1}>0$ we can use
the well-known \emph{Birkhoff-Moser normal form}~\cite{M56}
\begin{equation}
\begin{array}{l}
\bar{x} = B(xy,\varepsilon)x = \lambda(\varepsilon) x\left(1 +
\sum\limits_{i=1}^\infty \beta_i(\varepsilon) \cdot (xy)^i \right),  \\
\bar y = B(xy,\varepsilon)^{-1}y = \lambda^{-1}(\varepsilon) y\left(1 +\sum\limits_{i=1}^\infty
\tilde \beta_i(\varepsilon) \cdot (xy)^i \right),
\end{array}
\label{eq:nfBM}
\end{equation}
where $B(xy,\varepsilon)$ is a real-analytic function (of the variable $u=xy$) well-defined in a
small fixed neighbourhood of $u=0$ for all sufficiently small $\varepsilon$.
The coefficients $\beta_i$ are called {\em Birkhoff coefficients}, the coefficients $\tilde\beta_i$ depend on those in such a way that
$\tilde\beta_i$ is a single-valued functions of
$\beta_1,...,\beta_i$. For example, $\tilde\beta_1 = - \beta_1$, $\tilde\beta_2 = \beta_1^2 -
\beta_2$, etc.

In the smooth case, following
\cite{GS87,GS90,GST07},  we can apply the so-called {\it finitely smooth normal forms}
of the saddle map. {\em The main normal
form} (of the first order) for the saddle map $T_0(\varepsilon)$ is given by the following lemma.
\begin{lm}
\label{lemma:NF1order} {\rm~\cite{GST07}.} Let $T_0(\varepsilon)$ be $C^r$ with $r\geq 3$.   Then
there exists a canonical {$C^r$-change} of coordinates under which
$T_0(\varepsilon)$ takes the form
\begin{equation}
\begin{array}{l}
\bar{x} = \lambda(\varepsilon) x\left(1 + \beta_1(\varepsilon)  xy\right) + o(x^2y), \;\;
\bar y = \gamma(\varepsilon) y\left(1 - \beta_1(\varepsilon)  xy \right) + o(xy^2),
\end{array}
\label{eq:nf1}
\end{equation}
where $\beta_1\equiv 0$ in the case $\lambda\gamma=-1$. The change is $C^{r-2}$ with respect to the
parameters.
\end{lm}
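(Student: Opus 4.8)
The plan is to reduce $T_0(\varepsilon)$ to the stated form in three stages---linear reduction, straightening of the invariant manifolds, and a resonant (Birkhoff) normalization---carrying out every step by an area-preserving (canonical) transformation and tracking the loss of smoothness throughout. The guiding principle is that for a saddle with $\lambda\gamma=\pm1$ the only monomials that cannot be removed are powers of the product $u=xy$, and that the sign $\lambda\gamma=-1$ kills the odd powers, which is exactly what forces $\beta_1\equiv0$ in the non-orientable case.

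First I would diagonalize the linear part by a linear symplectic change, so that $T_0$ reads $\bar x=\lambda x+\cdots$, $\bar y=\gamma y+\cdots$, and then straighten the local invariant manifolds $W^u_{loc}(O_\varepsilon)$ and $W^s_{loc}(O_\varepsilon)$ onto the coordinate axes $\{y=0\}$ and $\{x=0\}$. Since both axes must remain invariant after straightening, the map acquires the multiplicative (``cross'') form
\[
\bar x = x\,h(x,y,\varepsilon),\qquad \bar y = y\,g(x,y,\varepsilon),
\]
with $h(0,0,\varepsilon)=\lambda(\varepsilon)$ and $g(0,0,\varepsilon)=\gamma(\varepsilon)$. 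The existence of a canonical smooth straightening follows from the invariant-manifold theorem combined with an area-preserving interpolation; already at this stage one pays a couple of derivatives in the $\varepsilon$-dependence, which is one source of the final $C^{r-2}$ estimate.

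The core of the argument is the resonant normalization. The key observation is that $u=xy$ transforms as $\bar u=\bar x\bar y=\lambda\gamma\,u\,(1+O(u))=(\pm1)\,u\,(1+O(u))$, so that the monomial $x\,(xy)^k$ in $\bar x$ (resp. $y\,(xy)^k$ in $\bar y$) is resonant precisely when $(\lambda\gamma)^k=1$; the first Birkhoff term $\beta_1 xy$ corresponds to $k=1$, i.e. to $x^2y$, whose resonance condition is $\lambda\gamma=1$. In the orientable case $\lambda\gamma=1$ all such terms are resonant, yielding the Birkhoff form $\bar x=\lambda x\,B(xy,\varepsilon)$, $\bar y=\gamma y\,B(xy,\varepsilon)^{-1}$ of~(\ref{eq:nfBM}); in the non-orientable case $\lambda\gamma=-1$ only even powers survive, so the $k=1$ term is non-resonant and $\beta_1\equiv0$. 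I would eliminate the non-resonant monomials by successive canonical polynomial changes, solving the corresponding homological equations; since only the first-order term is needed it suffices to run the procedure up to cubic order, sweeping everything of higher order into the remainders $o(x^2y)$ and $o(xy^2)$. The degree-two terms $x^2,xy$ (and $xy^2,x^3$ at degree three) are all non-resonant and disappear, while $x^2y$ and $xy^2$ are kept. Finally, imposing the area-preservation constraint $\det DT_0\equiv\lambda\gamma$ order by order forces the coefficient in $\bar y$ to be the negative of the one in $\bar x$ at first order, producing the $1+\beta_1 xy$ versus $1-\beta_1 xy$ structure (consistently with $\tilde\beta_1=-\beta_1$ in~(\ref{eq:nfBM})).

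The main obstacle is not the resonance algebra, which is classical, but the finite-smoothness bookkeeping. In the analytic case Moser's theorem delivers the full convergent form~(\ref{eq:nfBM}) directly; here one must instead estimate how many derivatives are consumed by each normalizing change in order to certify that the transformed map is genuinely $C^r$ in $(x,y)$ with remainders of the stated order, while the dependence on $\varepsilon$ degrades only to $C^{r-2}$ (two derivatives being lost because the normalizing transformations and the manifolds are differentiated once in $\varepsilon$ at the homological step and once more in matching the orders). This is precisely the type of careful loss-of-derivatives estimate developed in~\cite{GS87,GS90,GST07}, and it is the step I expect to demand the most technical care.
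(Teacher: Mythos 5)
Your stages 1--2 (linear symplectic reduction, then straightening of $W^{s}_{loc}$ and $W^{u}_{loc}$ by a generating-function canonical change) coincide with what is actually done, and your idea of using the Jacobian identity to force the $1+\beta_1 xy$ versus $1-\beta_1 xy$ structure is also the correct mechanism. The genuine gap is in your stage 3. After straightening, the map has the form $\bar x=\lambda x\bigl(1+\varphi(x,\varepsilon)+\psi(y,\varepsilon)+\cdots\bigr)$, where $\varphi$ and $\psi$ are $C^{r-1}$ \emph{functions} vanishing at the origin, not finite jets. The normal form~(\ref{eq:nf1}) requires a remainder that is $o(x^2y)$ in the product sense, i.e.\ it must vanish on both coordinate axes (in particular $T_0$ must become exactly linear on $W^s_{loc}$ and $W^u_{loc}$). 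No finite-order polynomial normalization can achieve this: after removing the quadratic and cubic jets you are left with tails such as $x\cdot O(x^3)$ (a function of $x$ alone) and $x\cdot O(y^3)$, and a term depending on $x$ alone, however high its order, is never $o(x^2y)$ because it does not vanish on $\{y=0\}$; likewise $xy^4$ is not $o(x^2y)$, since $xy^4/(x^2y)=y^3/x$ blows up along $x=y^4$. So ``sweeping everything of higher order into the remainders'' is precisely the step that fails; your resonance algebra is correct only at the level of formal jets, and the first-order form cannot be certified this way.

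What is actually required --- and what~\cite{GST07} and the paper's own proof of the $n$-th order version (Lemma~\ref{lemma:FSNF} in Section~\ref{appendix}, of which Lemma~\ref{lemma:NF1order} is explicitly ``the first step of induction'') do --- is to remove the \emph{entire function-coefficients} $\varphi(x,\varepsilon)$ and $\psi(y,\varepsilon)$ (and, at order $i$, the terms $\varphi^{(i)}(x,\varepsilon)(xy)^i$, $\psi^{(i)}(y,\varepsilon)(xy)^i$). This is done with canonical changes generated by $V(x,\eta)=x\eta+(x\eta)^{i+1}v(x,\varepsilon)$, where $v$ must solve the \emph{functional} homological equation $v(\lambda x,\varepsilon)=\pm\, v(x,\varepsilon)-\tfrac{1}{i+1}\varphi^{(i)}(x,\varepsilon)$, which the paper solves by recognizing $u=v(x,\varepsilon)$ as the strong stable invariant manifold of the auxiliary planar map $\bar u=\pm u-\tfrac{1}{i+1}\varphi^{(i)}(x,\varepsilon)$, $\bar x=\lambda x$ (the Leontovich--Afraimovich method mentioned in Remark~\ref{rem:nf}). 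It is exactly this step --- not polynomial algebra --- that produces the product-type remainders $o(x^2y)$, $o(xy^2)$, and it is where the stated finite loss of smoothness (the $C^{r-2}$ dependence on $\varepsilon$ here, and $C^{r-2n+1}$ at order $n$) originates. The remaining cross-terms are then killed by the Jacobian identity $|J(T_\varepsilon)|\equiv 1$, as you anticipated, and in the non-orientable case the odd-order \emph{constant} (monomial) terms are the only ones that polynomial canonical changes are used for, which is what yields $\beta_1\equiv 0$.
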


The following lemma {concerns}
{\em the $n$-th order normal form}.
\begin{lm}
\label{lemma:FSNF}
For any integer $n\geq 2$ such that $n < r/2$ (if $r = \infty$, then $n$ is arbitrary), there
exists a canonical $C^{r-2n +1}$
change of coordinates  under which $T_0(\varepsilon)$ takes
the form
\begin{equation}
\begin{array}{l}
\bar{x} = \lambda(\varepsilon) x\left(1 + \beta_1(\varepsilon) \cdot xy + ... +
\beta_n(\varepsilon) \cdot (xy)^n\right) + o(x^{n+1}y^n), \\
\bar y = \gamma(\varepsilon) y\left(1 + \tilde \beta_1(\varepsilon) \cdot xy + ... +
\tilde\beta_n(\varepsilon) \cdot (xy)^n\right) + o(x^{n}y^{n+1}).
\end{array}
\label{eq:nfn}
\end{equation}
Moreover,
in the case $\lambda\gamma=-1$, $\beta_{i} = \tilde\beta_{i}\equiv 0$ for odd $i$.
\end{lm}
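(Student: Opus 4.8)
**The plan is to prove Lemma~\ref{lemma:FSNF} by induction on the normal-form order, realizing each successive order by a single canonical (area-preserving) polynomial change of coordinates that kills the corresponding resonant cross-terms, and carefully tracking the loss of smoothness.** The starting point is Lemma~\ref{lemma:NF1order}, which already provides the $n=1$ normal form with a $C^{r-2}$ change of variables. I would set up the inductive step as follows: assuming $T_0(\varepsilon)$ has been brought to the $(n-1)$-th order normal form by a $C^{r-2(n-1)+1}$ change, I seek a near-identity canonical transformation of the form $x \mapsto x(1 + p_n(xy)) $, $y \mapsto y(1 + q_n(xy))$ (plus appropriate higher corrections), where $p_n, q_n$ are homogeneous of degree $n$ in the product $u = xy$, chosen to normalize the degree-$(2n+1)$ terms. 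The key structural fact I would exploit is that the ``resonant'' monomials for a saddle map with multipliers $\lambda, \gamma = \pm\lambda^{-1}$ are exactly the powers of the product $u = xy$, so after each step the only surviving terms in $\bar x / x$ and $\bar y / y$ are functions of $u$.

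The main technical device is the \textbf{area-preservation (canonical) constraint}, which is what forces the relation between the $\beta_i$ and $\tilde\beta_i$ and ultimately pins them down as single-valued functions of $\beta_1,\dots,\beta_i$ (exactly as recorded for the analytic Birkhoff--Moser form). Concretely, writing $\bar x = B(u)x$, $\bar y = \tilde B(u) y$ to the relevant order, the Jacobian condition $\det D T_0 = 1$ imposes that $B(u)\tilde B(u) = \lambda\gamma = \pm 1$ up to the order being controlled, together with a correction coming from the $\partial_u$ derivatives; expanding this identity order by order in $u$ yields recursively $\tilde\beta_i$ in terms of $\beta_1,\dots,\beta_i$. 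I would verify that the polynomial generating function (or the explicit canonical substitution) producing each coordinate change is itself area-preserving, so that canonicity is preserved throughout the induction and the final change remains canonical.

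\textbf{I expect the main obstacle to be the smoothness bookkeeping}, i.e.\ proving precisely that $n$ orders of normalization cost $2n-1$ derivatives, giving the stated regularity $C^{r-2n+1}$ and the constraint $n < r/2$. The difficulty is that each normalizing step divides by ``small denominators'' of the form $\lambda^{a}\gamma^{b} - 1$ evaluated on the resonant web; although for the conservative saddle these denominators do not actually vanish on the non-resonant monomials, solving the corresponding homological equation in the finite-smooth category (rather than analytic) loses derivatives, and one must show the loss is exactly two per order beyond the first. I would handle this by following the finite-smoothness scheme of~\cite{GS87,GS90,GST07}: at each step express the unknown coordinate change as the solution of a linear homological equation whose right-hand side lies in $C^{r - 2(n-1)+1}$, solve it explicitly on the resonant monomials (which is algebraic and hence loses no smoothness on the polynomial part), and absorb the non-resonant remainder into the $o(\cdot)$ terms at the cost of the derivatives needed to extract the Taylor polynomial of the required degree. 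The smoothness with respect to parameters $\varepsilon$ is tracked in parallel, noting that the coefficients $\lambda(\varepsilon), \gamma(\varepsilon), \beta_i(\varepsilon)$ depend smoothly on $\varepsilon$ and the algebraic solution of the homological equation preserves this dependence.

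\textbf{Finally, for the non-orientable case $\lambda\gamma = -1$}, I would argue that the symmetry $(x,y) \mapsto (-x,-y)$ interacts with the sign of $\gamma$ to force the vanishing of the odd-index coefficients. Since the linear part in case \textbf{A}.2 satisfies $\gamma = -\lambda^{-1}$, the relevant involution anti-commutes appropriately with $T_0$ at the linear level; the resonant monomial $(xy)^i$ picks up a sign $(-1)^i$ under a suitable $\mathbb{Z}_2$-equivariance, so that terms with odd $i$ are incompatible with the canonical, equivariant normal form and can be eliminated, yielding $\beta_i = \tilde\beta_i \equiv 0$ for odd $i$. This is consistent with Lemma~\ref{lemma:NF1order}, where already $\beta_1 \equiv 0$ when $\lambda\gamma = -1$. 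The detailed statement of this last step, together with the full verification of Lemma~\ref{lemma:FSNF}, is carried out in Section~\ref{appendix}.
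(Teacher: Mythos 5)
Your outline reproduces the right skeleton --- induction on the order with Lemma~\ref{lemma:NF1order} as base case, canonical generating-function changes at each step, the Jacobian identity to constrain coefficients, a loss of two derivatives per order --- but the step that carries the whole difficulty is handled incorrectly. At an intermediate order $i<n$ the terms that must be eliminated are not polynomials: after splitting off the constants, the map still contains $\lambda x\,[\varphi_1^{(i)}(x,\varepsilon)+\psi_1^{(i)}(y,\varepsilon)](xy)^i$ (and its analogue in $\bar y$), where $\varphi_1^{(i)},\psi_1^{(i)}$ are merely finitely smooth functions vanishing at the origin. These terms are neither resonant nor negligible: for $i<n$ they are \emph{not} $o(x^{n+1}y^n)$ (take $\varphi_1^{(i)}(x)=x$; the resulting term $x^{i+2}y^i$ dominates $x^{n+1}y^n$ as $y\to 0$ with $x$ fixed), so your plan to ``absorb the non-resonant remainder into the $o(\cdot)$ terms'' after extracting Taylor polynomials fails. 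Nor can your ansatz $x\mapsto x(1+p_n(xy))$, $y\mapsto y(1+q_n(xy))$ remove them: a canonical change built from the resonant monomials $x(xy)^n$, $y(xy)^n$ acts trivially at order $2n+1$ when $\lambda\gamma=1$ (the Birkhoff coefficients are invariants --- that is exactly why they survive into the normal form), so it can neither normalize the polynomial part nor touch the function terms. What is actually needed --- and what the paper does --- is a change generated by $V_1^{(i)}=x\eta+(x\eta)^{i+1}v_1^{(i)}(x,\varepsilon)$ (and symmetrically $V_2^{(i)}$ with a function of $\eta$), where $v_1^{(i)}$ solves the functional equation~(\ref{can3.5}); the existence and the $C^{r-2i-1}$ smoothness of this solution are obtained by realizing its graph as the strong stable invariant manifold of the auxiliary planar map~(\ref{can3.6}). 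This Leontovich--Afraimovich invariant-manifold device, not small-denominator algebra, is the core of the finite-smooth proof, and it is absent from your proposal. Note also that area preservation is used for more than recovering $\tilde\beta_i$ from $\beta_i$: after the two changes, the two remaining functions $\tilde\varphi_2^{(i)},\tilde\psi_1^{(i)}$ are killed \emph{automatically}, because $|J(T_\varepsilon)|\equiv 1$ forces $\lambda\tilde\varphi_2^{(i)}(x,\varepsilon)+\gamma\tilde\psi_1^{(i)}(y,\varepsilon)\equiv 0$, and two functions of different variables vanishing at the origin must then vanish identically.

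The non-orientable claim is also not established by your argument. Under the involution $(x,y)\mapsto(-x,-y)$ the product $xy$ is \emph{invariant}, so $(xy)^i$ picks up no sign $(-1)^i$ and no parity constraint on the $\beta_i$ follows; your equivariance argument collapses at this point. The correct mechanism is resonance arithmetic: when $\lambda\gamma=-1$ one has $\lambda^{i+1}\gamma^i=(-1)^i\lambda$, so the monomials $x(xy)^i$ and $y(xy)^i$ with odd $i$ are \emph{non-resonant}; the paper removes them by explicit canonical polynomial changes with generating functions $\tilde V_i=x\eta+\nu_i(x\eta)^{i+1}$, using in addition the area-preservation relation $\beta_n=-\tilde\beta_n$ at the last surviving odd order, so that a single such change cancels both coefficients simultaneously. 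Finally, your closing sentence defers ``the full verification'' to Section~\ref{appendix}, i.e.\ to the paper's own proof; in a blind proof this is circular, and the items listed above are precisely what that verification has to supply.
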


\begin{remark}
1) We refer Lemma~\ref{lemma:NF1order} to the paper~\cite{GST07}, where it was proved for the case
$|\lambda\gamma|=1$ and the proof (using canonical transformations) covers also the area-preserving
case, independently, whether the map $T_0(\varepsilon)$ is orientable or not.
We also note that a  version of Lemma~\ref{lemma:NF1order} with the existence of a $C^{r-1}$-change of coordinates
was proved in~\cite{MR97,GS00}
for the symplectic case and in~\cite{GS90} for the case $|\lambda\gamma|=1$.

2) Note that analogous to~(\ref{eq:nfn}) finite-smooth local normal forms for two-dimensional flows having a
saddle equilibrium with eigenvalues $-\rho$ and $\rho$, where
$\rho>0$,
were derived by E.A.~Leontovich
\cite{Le-51,Le-88}. When proving Lemma~\ref{lemma:FSNF} we {follow} closely to the Leontovich method
with some modifications proposed by V.S.~Afraimovich~\cite{Afr84}.
\label{rem:nf}
\end{remark}

One of the advantages of the pointed out normal forms is that they allow {us} to obtain a quite simple
coordinate expression for the
iterations $T_0^k$ for all integer $k$.  Namely, let $(x_i,y_i)\in U_0, i=0,\dots,k-1,$ be
points {such} that $(x_{i+1},y_{i+1}) = T_0(x_{i},y_{i})$. If $T_0$ is linear, then, evidently, $x_k=\lambda^k x_0,\; y_k=\gamma^k y_0$.
We can rewrite the last formula in the so-called {\em cross-form} $x_k = \lambda^k x_0, y_0 = \gamma^{-k}y_k$.
An analogous cross-form exists also in the nonlinear case.
In the case of $T_0$ in the Birkhoff-Moser normal form~(\ref{eq:nfBM}),
the map $T_0^k$ can be written as follows~\cite{GS97}
\begin{equation}
\begin{array}{l}
x_k = \lambda^k x_0 \cdot R^{(k)}(x_0y_k,\varepsilon), \;\;
y_0 = \lambda^{k} y_k \cdot R^{(k)}(x_0y_k,\varepsilon),
\end{array}
\label{03b-BM}
\end{equation}
where
\begin{equation}
\begin{array}{l}
R^{(k)}(x_0y_k,\varepsilon) \equiv 1 + \sum\limits_{i=1}^\infty  \hat\beta_1(k) \lambda^{ik} (x_0y_k)^i
\end{array}
\label{002a}
\end{equation}
and $\hat\beta_i(k)$ are some $i$-th degree polynomials of $k$ with coefficients depending on
$\beta_1,...,\beta_{i}$, in particular,
\begin{equation}
\begin{array}{l}
\hat\beta_1(k,\varepsilon) = \beta_1(\varepsilon) \cdot k, \;\; \hat\beta_2(k,\varepsilon) =
\beta_1^2(\varepsilon) \cdot k^2 + \beta_2(\varepsilon)\cdot k\;.
\end{array}
\label{hatbet}
\end{equation}

In the case of finitely smooth normal forms the following results hold.
\begin{lm} {\rm~\cite{GST07}}
If $T_0$ takes the first order normal form {\rm~(\ref{eq:nf1})}, then $T_0^k$ can be written as
follows
\begin{equation}
\begin{array}{l}
(x_k,y_0) = (\lambda^k x_0,\gamma^{-k} y_k)(1  + \beta_1 k\lambda^{k}x_0y_k) + \lambda^{2k}(P_1(x_0,y_k,\varepsilon),Q_1(x_0,y_k,\varepsilon))
\end{array}
\label{03bnf1}
\end{equation}
where the functions $P_1$ and $Q_1$ are uniformly bounded along with all {their} derivatives up to order $(r-2)$  and the following
estimates take place
for the last {two} derivatives
$$
\|(x_k,y_0)\|_{C^{r-1}} = O(|\lambda|^k),\; \|(x_k,y_0)\|_{C^{r}} = o(1)_{k\to\infty}.
$$
\label{lemma:lmTknf1}
\end{lm}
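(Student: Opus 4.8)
The plan is to recast the iteration $T_0^k$ as a two-point boundary value problem solved by contraction, and then to read off the dependence on $k$ from the near-invariance of $u=xy$ under $T_0$. Since the saddle contracts forward in $x$ (factor $\lambda$, $|\lambda|<1$) and backward in $y$ (factor $\gamma^{-1}$, $|\gamma^{-1}|=|\lambda|<1$), the appropriate unknowns are the outgoing coordinates $(x_k,y_0)$ expressed through the incoming data $(x_0,y_k)$, exactly the cross-form used in the analytic case~(\ref{03b-BM}). I would fix $(x_0,y_k,\varepsilon)$ and seek the orbit segment $\{(x_i,y_i)\}_{i=0}^{k}$ as a fixed point of the operator that propagates $x$ forward via $x_{i+1}=\lambda x_i(1+\beta_1 x_iy_i)+o(x_i^2y_i)$ and $y$ backward by solving the second equation of~(\ref{eq:nf1}) for $y_i$ in terms of $y_{i+1}$ (uniquely solvable because $x_iy_i$ is small). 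On a small ball and in the sup-norm this operator is a contraction with factor close to $|\lambda|<1$; hence there is a unique fixed point, smooth in $(x_0,y_k,\varepsilon)$ to the regularity provided by Lemma~\ref{lemma:NF1order}.

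From the contraction one obtains $|x_i|\le C|\lambda|^i|x_0|$ and $|y_i|\le C|\lambda|^{k-i}|y_k|$, whence the crucial uniform bound $u_i:=x_iy_i=\lambda^kx_0y_k(1+o(1))$ for every $i$; this is the finite-smooth counterpart of the exact invariance of $xy$ in~(\ref{eq:nfBM}). Writing $x_{i+1}=\lambda x_i(1+\beta_1u_i+r_i)$ with $r_i=o(u_i)$ and telescoping gives $x_k=\lambda^kx_0\prod_{i=0}^{k-1}(1+\beta_1u_i+r_i)$. The explicit $k$-linear term arises precisely from summing the near-constant quantities $\beta_1u_i$, since $\sum_{i=0}^{k-1}u_i=k\lambda^kx_0y_k(1+o(1))$; by contrast the remainders $r_i$ carry extra geometric decay toward the middle of the orbit and therefore sum to $O(\lambda^k)$, as does the second-order part of the product, so that the $k$-dependence stays confined to the one explicit term. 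Collecting, $x_k=\lambda^kx_0(1+\beta_1k\lambda^kx_0y_k)+\lambda^{2k}P_1$ with $P_1$ bounded, and the same computation on the backward $y$-recursion yields the companion formula for $y_0$ with $\gamma^{-k}$ in place of $\lambda^k$. In the non-orientable case $\beta_1\equiv0$ and the explicit correction simply vanishes, the estimates being unchanged in modulus.

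The main obstacle is the smoothness bookkeeping behind the regularity of $P_1,Q_1$ and the degraded top-order estimates. I would differentiate the fixed-point relations with respect to $(x_0,y_k)$ and $\varepsilon$: each derivative of the solution satisfies a linear recursion driven by the matching derivatives of the right-hand side, and the same contraction factor $|\lambda|<1$ closes these linear recursions, producing uniform bounds wherever the corresponding derivatives of the normal form are under control. Because the remainder in~(\ref{eq:nf1}) is controlled only in $C^{r-2}$, this yields uniform bounds on $P_1,Q_1$ together with all derivatives up to order $r-2$ and reproduces the geometric factor $\lambda^{2k}$ at those orders. At orders $r-1$ and $r$ one reaches the smoothness ceiling of the underlying $C^r$ map, where the clean geometric gain is lost; there only boundedness together with a vanishing-rate argument survives, which is exactly why the statement asserts $\|(x_k,y_0)\|_{C^{r-1}}=O(|\lambda|^k)$ but merely $\|(x_k,y_0)\|_{C^{r}}=o(1)_{k\to\infty}$. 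I expect the delicate point to be tracking how each differentiation in $y_k$ lowers one power of $\lambda^k$ while differentiation in $x_0$ does not, so as to confirm the stated $C^{r-1}$ and $C^{r}$ bounds for the full map rather than only for its leading term.
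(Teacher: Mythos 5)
The paper itself contains no proof of Lemma~\ref{lemma:lmTknf1}: it is quoted from \cite{GST07} (see also \cite{GS00}), so your proposal can only be measured against the method of those references. That method is exactly the one you describe -- Shilnikov's cross-form (boundary-value) formulation, in which the orbit segment is recovered from the data $(x_0,y_k)$ by successive approximations/contraction, followed by telescoping of the one-step factors. Your derivation of the leading term is right, including the fact that the backward $y$-recursion produces the \emph{same} factor $1+\beta_1 k\lambda^k x_0y_k$ as the forward $x$-recursion (the $-\beta_1$ in the second equation of (\ref{eq:nf1}) becomes $+\beta_1$ upon inverting the one-step factor), and the reduction of the non-orientable case to $\beta_1\equiv 0$ is consistent with the statement.

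The one step you should not leave as an assertion is $\sum_{i=0}^{k-1} r_i = O(\lambda^k)$. From $r_i=o(u_i)$ and $u_i\approx \lambda^k x_0y_k$ alone one only gets $\sum_i r_i=o(k\lambda^k)$, hence an error in $x_k$ of size $o(k\lambda^{2k})$, which is \emph{not} of the stated form $\lambda^{2k}P_1$ with $P_1$ uniformly bounded in $k$. What rescues the estimate is the structure of the $o(x^2y)$ term in (\ref{eq:nf1}): in the normal form it vanishes identically on both coordinate axes and can be written as $x^2y\,\tilde f(x,y,\varepsilon)$ with $\tilde f(0,0)=0$ and $\tilde f\in C^{r-2}$, hence Lipschitz for $r\ge 3$; along the orbit this gives $|\tilde f(x_i,y_i)|=O(|\lambda|^i+|\lambda|^{k-i})$, so $|r_i|=O\bigl(|\lambda|^k(|\lambda|^i+|\lambda|^{k-i})\bigr)$ and the sum is geometric from both ends -- this is the precise content of your phrase ``extra geometric decay toward the middle of the orbit'', and it must be invoked explicitly. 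The same mechanism governs the smoothness claims: the uniform $C^{r-2}$ bounds on $P_1,Q_1$ hold because the divided (refined) estimates on the $o$-terms survive exactly that many differentiations of the fixed-point equations, while at orders $r-1$ and $r$ they are exhausted, which is where $O(|\lambda|^k)$ and $o(1)_{k\to\infty}$ come from; your plan states this correctly in outline, but that bookkeeping, not the leading-order computation, is the bulk of the actual proof in \cite{GST07}.
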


\begin{lm} {\rm~\cite{GG09}}
If $T_0$ takes the $n$-th order normal form {\rm~(\ref{eq:nfn})}, then $T_0^k$ can be written as
\begin{equation}
\begin{array}{l}
x_k = \lambda^k x_0 \cdot R_n^{(k)}(x_0y_k,\varepsilon) +
\lambda^{(n+1)k}P_n^{(k)}(x_0,y_k,\varepsilon) , \;\;\\
y_0 = \gamma^{-k} y_k \cdot R_n^{(k)}(x_0y_k,\varepsilon) +
\lambda^{(n+1)k}Q_n^{(k)}(x_0,y_k,\varepsilon) \;,
\end{array}
\label{03b}
\end{equation}
where $\displaystyle R_n^{(k)}(x_0y_k,\varepsilon) = 1 + \sum\limits_{i=1}^n  \hat\beta_1(k) \lambda^{ik} (x_0y_k)^i$ ({a finite sum}
of series {\rm~(\ref{002a})}).
The functions $P_n^{(k)} = o(x_0^{n+1}y_k^n),
Q_n^{(k)} = o(x_0^{n}y_k^{n+1})$ are uniformly bounded {in} $k$ along with all {their}
derivatives with respect to $x_0$ and $y_k$ up to the order $(r-2n-1)$ (and up to the order  $(r-2n-1)$ with respect to the derivatives by parameters)
and  $\|(x_k,y_0)\|_{C^{r-2n}} = O(|\lambda|^k),\; \|(x_k,y_0)\|_{C^{r-2n+1}} = o(1)_{k\to\infty}$.
\label{lemma:lm5}
\end{lm}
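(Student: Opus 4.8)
The plan is to realize $T_0^k$ as the solution of the two-point boundary value problem attached to the saddle, generalizing the first-order case (Lemma~\ref{lemma:lmTknf1},~\cite{GST07}) and the analytic scheme (\ref{03b-BM}),~\cite{GS97}. Writing $T_0$ in the $n$-th order normal form (\ref{eq:nfn}), I set $u=xy$, $R_x(u)=1+\sum_{i=1}^n\beta_i u^i$, $R_y(u)=1+\sum_{i=1}^n\tilde\beta_i u^i$, so that $\bar x=\lambda x R_x(u)+\rho_x$ and $\bar y=\gamma y R_y(u)+\rho_y$, with $\rho_x=o(x^{n+1}y^n)$, $\rho_y=o(x^n y^{n+1})$, both of class $C^{r-2n+1}$. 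Along an orbit segment $(x_0,y_0)\to\cdots\to(x_k,y_k)$ I keep the contracting datum $x_0$ and the backward-contracting datum $y_k$ fixed, and invert the expanding equation to get $y_i=\gamma^{-1}y_{i+1}R_y(u_i)^{-1}+\ldots$. The forward recursion for $x$ and the backward recursion for $y$ define an operator on sequences $\{(x_i,y_i)\}_{i=0}^k$ whose fixed point is the orbit; since $|\lambda|<1$ and $|\gamma^{-1}|=|\lambda|<1$, and the coupling through $u_i$ is of higher order, this operator is a \emph{uniform-in-$k$} contraction on a small ball in the sup norm, so the orbit exists, is unique, and depends on $(x_0,y_k,\varepsilon)$ as smoothly as the data permit.

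The contraction immediately yields the hyperbolic bounds $x_i=O(|\lambda|^i x_0)$ and $y_i=O(|\lambda|^{k-i}y_k)$, whence the key observation that every crossed product is uniformly small, $u_i=x_iy_i=O(|\lambda|^k)$, and in fact $u_i=\lambda^k x_0y_k\,(1+o(1))$. This is reinforced by the \emph{almost-invariance} of $u$: the normal-form relations between the $\beta_i$ and $\tilde\beta_i$ (the same ones that force $R_x R_y=1+O(u^{n+1})$ in (\ref{eq:nfBM})) give $\bar u=(\lambda\gamma)u R_x(u)R_y(u)+(\text{cross terms})=\pm u+o(u^{n+1})$, the cross terms $\lambda xR_x\rho_y$, $\gamma y R_y\rho_x$, $\rho_x\rho_y$ all being $o(u^{n+1})$. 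Hence the per-step drift of $u$ is $o(|\lambda|^{(n+1)k})$ and $u_i$ may be replaced by $\lambda^k x_0y_k$ up to controlled errors (in the non-orientable case $\bar u=-u+\ldots$ merely flips signs, which is harmless since the relevant $R_x,R_y$ are then even in $u$).

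Telescoping the forward recursion gives $x_k=\lambda^k x_0\prod_{i=0}^{k-1}R_x(u_i)$ modulo propagated remainders. Substituting $u_i=\lambda^k x_0y_k(1+o(1))$ and expanding the $k$-fold product to order $n$ in the small quantity $\lambda^k x_0y_k$ reproduces exactly $R_n^{(k)}(x_0y_k,\varepsilon)=1+\sum_{i=1}^n\hat\beta_i(k)\lambda^{ik}(x_0y_k)^i$, the coefficients $\hat\beta_i(k)$ emerging as the degree-$i$ polynomials in $k$ recorded in (\ref{hatbet}) (from $\log R_x$ and the binomial expansion, exactly as in the analytic case (\ref{03b-BM})). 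The tail of this expansion, together with the genuine remainders $\rho_x,\rho_y$ propagated forward through the remaining steps (multiplication by the contraction factor $\sim\lambda^{k-i}$) and summed over $i$, are of order $\lambda^{(n+1)k}$ times a quantity $o(x_0^{n+1}y_k^n)$; collecting them into $\lambda^{(n+1)k}P_n^{(k)}$, and symmetrically into $\lambda^{(n+1)k}Q_n^{(k)}$ for $y_0$, gives (\ref{03b}). The one subtlety here is that the $k$-fold products and sums create polynomial-in-$k$ factors; in the \emph{main} term these are precisely the $\hat\beta_i(k)$, whereas in the \emph{remainder} they must be dominated by the extra power $\lambda^{(n+1)k}$, a point settled only by the estimates below.

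The remaining and hardest step is to show that $P_n^{(k)},Q_n^{(k)}$ together with all derivatives up to order $r-2n-1$ (in $x_0,y_k$ and in $\varepsilon$) stay \emph{uniformly bounded in} $k$, and that the top-order norms obey $\|(x_k,y_0)\|_{C^{r-2n}}=O(|\lambda|^k)$ and $\|(x_k,y_0)\|_{C^{r-2n+1}}=o(1)_{k\to\infty}$. For this I differentiate the fixed-point equation repeatedly: each derivative of the solution again satisfies a linear equation governed by the same contraction, with inhomogeneity built from lower-order derivatives, so the uniform constant $O(|\lambda|)$ keeps the geometric sums finite and kills the polynomial-in-$k$ growth in the remainder — but only while the smoothness $C^{r-2n+1}$ of the normal form is not exhausted, which accounts for the loss from $r$ down to $r-2n-1$ (two further derivatives being spent on the smooth-dependence argument). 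I expect the genuinely delicate point to be the endpoint estimate at the very top order, where the $k$-fold product no longer decays like a fixed power of $|\lambda|$ and one recovers only $o(1)_{k\to\infty}$ rather than $O(|\lambda|^k)$; this forces a splitting of the top derivative into a contracted interior part and a boundary part, using continuity of that derivative in place of a bound on the next one.
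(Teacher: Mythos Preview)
The paper itself does not prove this lemma: it states that the symplectic case was proved in~\cite{GG09} and that ``the proof of Lemma~\ref{lemma:lm5} for the non-orientable area-preserving case is practically the same and, therefore, we omit it.'' Your outline is precisely the method used in~\cite{GG09} (and in~\cite{GST07} for the first-order analogue, Lemma~\ref{lemma:lmTknf1}): set up the Shilnikov boundary-value problem in the cross variables $(x_0,y_k)$, solve it by a uniform-in-$k$ contraction, exploit the near-invariance $\bar u=(\lambda\gamma)u+o(u^{n+1})$ of the product $u=xy$ to replace every $u_i$ by $\lambda^k x_0y_k$, expand the telescoped product $\prod R_x(u_i)$ to obtain $R_n^{(k)}$ with the polynomial-in-$k$ coefficients $\hat\beta_i(k)$, and control the propagated remainders by differentiating the fixed-point equation. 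Your handling of the non-orientable case (sign alternation of $u_i$ being harmless because all odd $\beta_i,\tilde\beta_i$ vanish) is exactly the ``practically the same'' observation the paper alludes to.

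One point worth tightening when you write it out in full: the claim that the summed remainders are $\lambda^{(n+1)k}\cdot o(x_0^{n+1}y_k^n)$ with a \emph{uniformly bounded} factor needs the slightly stronger input $\rho_x=O(x^{n+2}y^{n+1})$, $\rho_y=O(x^{n+1}y^{n+2})$ (which is what the normal-form construction in Section~\ref{appendix} actually delivers, cf.\ (\ref{can3.1})), not merely the qualitative $o(x^{n+1}y^n)$; with only the latter, the sum of $k$ terms of size $|\lambda|^{(n+1)k}$ would carry an uncontrolled factor $k$. With the extra factor of $|x|\cdot|y|$ available, the $i$-th propagated term picks up an additional $|\lambda|^k$, the sum becomes geometric, and the $k$-growth disappears. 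This is the step that justifies absorbing the polynomial-in-$k$ factors into the remainder, which you correctly flag as the delicate point.
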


Lemmas~\ref{lemma:NF1order} and~\ref{lemma:lmTknf1} were proved in~\cite{GST07} (see also~\cite{GS00}). Lemmas~\ref{lemma:FSNF} and~\ref{lemma:lm5}
were proved in~\cite{GG09} for the symplectic case. The
proof of Lemma~\ref{lemma:lm5} for the non-orientable area-preserving case
is practically the same and, therefore, we omit it.  Thus, only Lemma~\ref{lemma:FSNF} is {really} new (in its part related to the
non-orientable case) and we prove it in Section~\ref{appendix}. For the convenience of the reader
we give the
complete proof of this lemma considering both symplectic and non-orientable cases.

\begin{remark}
In our calculations (see e.g. the proof of Lemma~\ref{henmain}) we will also use the {\em second order normal form} for $T_0$
\begin{equation}
\begin{array}{l}
\bar{x} = \lambda x\left(1 + \beta_1  xy + \beta_2 (xy)^2\right) +
O[(|x|^{3}|y|^2(|x| + |y|)]  \;,\;  \\
\bar y = \gamma y\left(1 - \beta_1 xy + \tilde\beta_2
(xy)^2\right) + O[|x|^{2}|y|^{3}(|x|+|y|)]  \;,
\end{array}
\label{eq:nfn2}
\end{equation}
that is given by~(\ref{eq:nfn}) for $n=2$,
where  $\tilde\beta_2 =
\beta_1^2 - \beta_2$; in the case $\lambda\gamma=-1$ we have
that $\beta_1\equiv 0$ and  $\tilde\beta_2 = -
\beta_2$. Then  formulae~(\ref{03b}) for $T_0^k$ {with} $n=2$ can be written as
\begin{equation}
\begin{array}{l}
x_k = \lambda^k x_0(1 + k \beta_1 \lambda^k x_0y_k) + O(k^2\lambda^{3k}), \;\; \\
y_0 = \gamma^{-k} y_k(1 + k \beta_1 \lambda^k x_0y_k)  + O(k^2\lambda^{3k}),
\end{array}
\label{eq:03bnf1}
\end{equation}
moreover, in the case $\lambda\gamma=-1$, {they take} a simpler
form
\begin{equation}
\begin{array}{l}
x_k = \lambda^k x_0 + O(k^2\lambda^{3k}),\;\;
y_0 = \gamma^{-k} y_k  + O(k^2\lambda^{3k}).
\end{array}
\label{03bnf1-1}
\end{equation}
\label{rem:03bnf1}
\end{remark}

\subsection{Properties of the global map $T_1(\varepsilon)$.}\label{sec:globmap}

In what follows, we will use in $U_0$ the local normal form coordinates $(x,y)$  introduced in Section~\ref{sec:1p1}.
In these coordinates both $W^s_{loc}$ and
$W^u_{loc}$ are straightened {out} and, hence, we can put $M^+=(x^+,0),M^-=(0,y^-)$, where $x^{+}>0$ and
$y^{-}>0$.
Then the global map $T_{1}(\varepsilon)\equiv f^q(\varepsilon):\Pi^{-}\rightarrow \Pi^{+}$ can be
written as follows
\begin {equation}
\begin {array}{l}
\overline{x} -x^{+}  =  F(x,y-y^{-},\varepsilon),\;\;
\overline{y} = G(x,y-y^{-},\varepsilon),
\end {array}
\label{eq:t1}
\end{equation}
where $F(0)=0,G(0)=0$. Besides, one has that $G_y(0)=0, G_{yy}(0)=2d\neq 0$ which follows from the
fact (condition B) that at $\varepsilon=0$
the curve $T_1(W^u_{loc}):\{\overline{x} -x^{+} = F(0,y-y^{-},0), \overline{y} = G(0,y-y^{-},0)\}$
has a quadratic tangency with $W_{loc}^s:\{\bar y =0\}$ at $M^+$. When parameters vary this
tangency can split and, moreover, we can introduce the corresponding splitting parameter $\mu\equiv G(0,0,\varepsilon)$.
{By condition \textbf{C}, we can assume that the parameter $\mu$ belongs to the set of
parameters $\varepsilon$.}
Accordingly, we can write the following Taylor expansions for the functions~$F$ and~$G$
\begin {equation}
\begin {array}{rcl}
F(x,y-y^{-},\varepsilon)  &=&
ax + b(y-y^{-}) + e_{20}x^2 + e_{11}x(y-y^{-}) + e_{02}(y-y^{-})^2 + \mbox{h.o.t} \;, \\
G(x,y-y^{-},\varepsilon)  &=& \mu + cx+d(y-y^{-})^{2} +  f_{20}x^2
+ f_{11}x(y-y^{-}) +  f_{30}x^3 \\
&&+ f_{21}x^2(y-y^{-}) + f_{12}x(y-y^{-})^2 + f_{03}(y-y^{-})^3 + \mbox{h.o.t}  \;,
\end {array}
\label{eq:t1ext}
\end{equation}
where the coefficients $a, b, \ldots, f_{03}$ (as well as $x^+$ and $y^-$) depend smoothly on
$\varepsilon$.

In the area-preserving case, the Jacobian $J(T_1)$ of $T_1$ is equal identically to $\pm 1$  for all
values of $\varepsilon$. In particular, this implies that
\begin {equation}
\begin {array}{l}
|bc| \equiv  1\;\;\mbox{and}\;\;
R = 2ad - b f_{11} - 2 c e_{02}\equiv 0,
\end {array}
\label{bcR}
\end{equation}
since $J(T_1)\bigl|_{M^-}= -bc\;\;$ and $\;\displaystyle \frac{\partial J(T_1)}{\partial
y}\Bigl|_{M^-} = R$.
\begin{figure}[htb]
\centerline{\epsfig{file=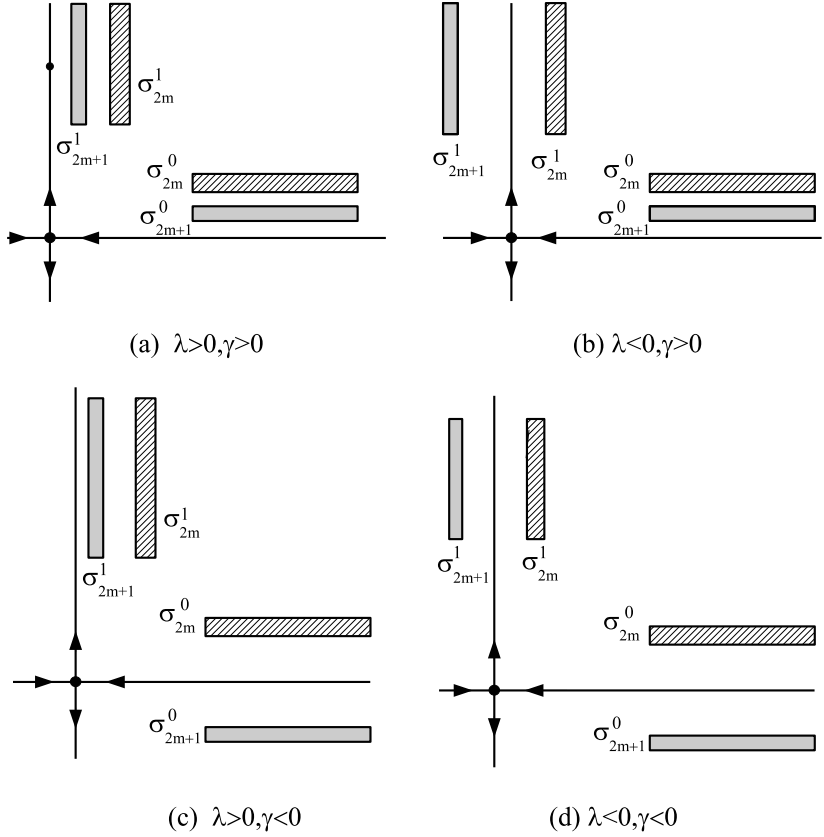,
height=12cm }} \caption{{\footnotesize The strips $\sigma_{k}^{0}$ and $\sigma_{k}^{1}$ for
$\lambda$ and $\gamma$ of various signs.}} \label{fig:strips}
\end{figure}

{We divide the APMs under consideration into three groups:
\begin{itemize}
 \item[(i)]  {the \emph{symplectic maps}}, when $T_0$ and $T_1$ are both orientable ($\lambda \gamma =1$ and $bc =-1$), in this case
condition \textbf{A}.1 holds;
\item[(ii)]  {the \emph{globally non-orientable maps}}, when $T_0$ is orientable and $T_1$ is non-orientable ($\lambda \gamma =1$ and $bc =1$),
i.e., the condition \textbf{A}.1 holds;
\item[(iii)]  {the \emph{locally non-orientable maps}}, when $T_0$ is non-orientable ($\lambda \gamma = -1$), i.e., the
condition \textbf{A}.2 holds.
\end{itemize}
}

Note that in the
case  $\lambda\gamma = -1$, the global map $T_1$ can be orientable ($bc=-1$) or non-orientable ({$bc=1$}) depending on the choice
of pairs of
homoclinic points $M^+$ and $M^-$. If $T_1$ is orientable for a given pair $(M^+,M^-)$, then it is non-orientable for the pairs $(T_0(M^+),M^-)$ or
$(M^+,T_0^{-1}(M^-)$ and again orientable for the pairs $(T_0^2(M^+),M^-)$, $(M^+,T_0^{-2}(M^-)$ or $(T_0(M^+),T_0^{-1}(M^-)$ etc. We will call any
such a pair  of the homoclinic points, when the
corresponding global map $T_1$ is orientable, of the {\em needed type}. For more definiteness,  we will assume that the following condition holds.

\begin{itemize}
\item[{\bf D.}] In the locally non-orientable case, we take always a pair of points $M^+\in W^s_{loc}$ and
$M^-\in W^u_{loc}$ of the homoclinic orbit $\Gamma_0$ which is of the needed type.
\end{itemize}

\subsection{Strips, horseshoes and return maps.}

We assume that the neighbourhoods $\Pi^{+}$ and $\Pi^{-}$ are sufficiently small and fixed, so that
$T_{0}(\varepsilon)(\Pi^{+})\cap\Pi^{+}=\emptyset$ and
$T_{0}^{-1}(\varepsilon)(\Pi^{-})\cap\Pi^{-}=\emptyset$ for all small $\varepsilon$. Then the
domain of definition of the
map from $\Pi^{+}$ to $\Pi^-$ under iterations of
$T_0(\varepsilon)$ consists of infinitely many nonintersecting strips $\sigma_{k}^{0}$ belonging to
$\Pi^+$ and accumulating to $W^s_{loc}\cap\Pi^+$ as $k\to\infty$. Analogously, the range of this
map consists of infinitely many (nonintersecting) strips $\sigma_{k}^{1}= T_0^k(\sigma_k^0)$
belonging to $\Pi^-$ and accumulating to $W^u_{loc}\cap\Pi^-$ as $k\to\infty$. See
Figure~\ref{fig:strips} where a location of the strips is shown for various cases of the signs of $\lambda$ and $\gamma$.

According to~(\ref{eq:t1}) and~(\ref{eq:t1ext}), the images $T_{1}(\sigma_j^{1})$ of the strips
$\sigma_j^{1}$ have a horse-shoe form and accumulate to the curve $l_u=T_1(W^u_{loc})$ as
$j\to\infty$. Note that any orbit staying entirely in $U$ must intersect both the neighbourhoods
$\Pi^-$ and $\Pi^+$ (otherwise, it would not be close enough to $\overline{\Gamma}_0$). Thus, such orbits
must have points belonging to the intersections of the horseshoes $T_{1} (\sigma_j^{1})$ and the strips
$\sigma_{i}^{0}$ for all possible integer $i$ and $j$.

When $\mu$ varies, the location of the horseshoes $T_{1}(\sigma_j^{1})$ changes: they move
together with $T_1(W^u_{loc})$. It implies that the character of mutual intersections of the strips
and horseshoes can change {drastically}. This concerns, in particular, the strips $\sigma_{i}^{0}$
and horseshoes  $T_{1} (\sigma_i^{1})$ with the same numbers $i$. Thus, {when $\mu$ changes},
bifurcations of Smale horseshoes creation/destruction  will occur.
In order to understand these bifurcations
we need to study, first of all, the dynamics of
the map $f_0$, i.e., at $\mu =0$.

For this goal, we study in the next section the semi-local dynamics of the  {APMs} 
with the homoclinic tangencies under conditions \textbf{A} and \textbf{B}.

\section{On a semi-local dynamics of APMs with homoclinic tangencies.}\label{sec3}

In this section we consider  APMs $f_0$ satisfying the conditions \textbf{A} and \textbf{B}. The main goal is to
understand the semi-local dynamics of $f_0$, i.e., the structure of the set $N$ of orbits of the map
$f_0$ entirely lying in a small fixed neighbourhood $U$ of the contour $O\cup\Gamma_0$.  Since $U$
is actually small and contains the neighbourhoods $\Pi^+$ and $\Pi^-$ of the homoclinic points
$M^+$ and $M^-$, we can assume that,  {apart from the orbit $O$}, the set $N$ contains only such
orbits that have intersection points with both $\Pi^+$ and $\Pi^-$. {Equivalently}, for a given
sufficiently large integer $\bar k>0$, we can assume that the neighbourhoods $\Pi^+$ and $\Pi^-$
contain the strips $\sigma_k^0$ and $\sigma_k^1$, respectively, only {for} $k\geq \bar k$.
In other words, we will consider only such {orbits} entirely lying in $U$  whose points from $\Pi^+$
can reach $\Pi^-$ {after} a number of iterations (under $f_0$) that is not less than $\bar k$. We denote
the set of such orbits by $N_{\bar k}\equiv N_{\bar k}(f_0)$.

We study properties of the orbits {in} $N_{\bar k}(f_0)$
using the main analytical result, Lemma~\ref{lm:th01}, (proved in~\cite{GS87,GS95}) as
a tool for detecting the type of intersection between  the horseshoes $T_{1}(\sigma_j^{1})$ and strips
$\sigma_i^{1}$ for various $i,j\geq\bar k$. We assume that
the initial tangency (under the conditions \textbf{A} and \textbf{B}) does not split: this corresponds to $\mu=0$ in
(\ref{eq:t1ext}). We will show that, in this case, the set $N_{\bar k}$ can be completely described
(in terms of symbolic dynamics) {for} an open and dense set\footnote{This is
not the case if  $|\lambda\gamma|\neq 1 $: as it is shown in
\cite{GST93a,GST99}),  systems with infinitely degenerate periodic orbits are
dense among those
with quadratic homoclinic tangencies.} of maps from $\cal H$.
The density should be regarded in the following sense: for a given $\bar k$, in ${\cal H}$ there exists an
open  set of maps whose set $N_{\bar k}$ is completely {determined} and this set becomes dense as $\bar
k\to\infty$:
in fact, one has to exclude only maps satisfying certain conditions {like ``the
invariant $\tau$, given in~(\ref{eq:tau}), is integer number''.}

\subsection{Conditions for the  intersection of horseshoes and strips.} \label{sec:inth+s}

Evidently,
any orbit of $N_{\bar k}$ (except for the orbits  $O$ and
$\Gamma_0$) must have
points belonging to the intersection of the horseshoes $T_{1} (\sigma_j^{1})$ and strips
$\sigma_{i}^{0}$ for some $i,j\geq\bar k$. Thus, the structure of $N_{\bar k}$ depends essentially
on the character of this  intersection.

\begin{df} {\em We say that the horseshoe $T_{1}(\sigma_{j}^{1})$ has
a {\sf regular intersection} with the strip $\sigma_{i}^{0}$ if}
\begin{itemize}
\item[(i)] {\em the set $T_{1}(\sigma_j^{1}) \cap\sigma_i^{0}$ consists of
two connected components $\Delta_{ij}^{1}$ and $\Delta_{ij}^{2}$~;}
\item[(ii)] {\em the map $T_{1}T_{0}^{j}$ restricted to the preimage
$(T_{1}T_{0}^j)^{-1}\Delta_{ij}^\alpha \subset \sigma_j^{0}$ of the component $\Delta_{ij}^\alpha$,
where $\alpha=1,2,$ is a saddle map
(i.e., it is exponentially
contracting along one of coordinates,
$x$,  and expanding along the other one,
$y$), see Figure~\ref{saddlint}.}
\end{itemize}
\label{def:regularinters}
\end{df}

\begin{figure}[htb]
\centerline{\epsfig{file=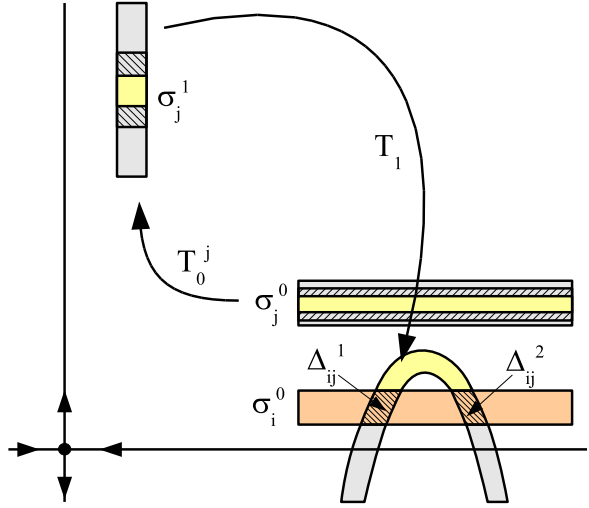,
    height=8cm}} 
\caption{{\footnotesize Regular intersection of the horseshoe $T_1(\sigma_j^1)$
and the strip $\sigma_i^0$ (Definition 2).}} 
\label{saddlint}
\end{figure}

The following lemma provides sufficient conditions characterizing intersections of the
strips and horseshoes.

\begin{lm} {\rm~\cite{GS87,GS95}}
Given {$f_0$ satisfying conditions \textbf{A} and \textbf{B}, with $c$ and $d$
given in~(\ref{eq:t1ext}) at $\mu=0$}, there exist a constant $S_1>0$ and a sufficiently large
integer $\bar{k}$ such that, for any $i,j \geq \bar{k}$, the following assertions hold:

{\rm (i)} If
\begin{equation}
d ( \gamma^{-i} y^{-} - c \lambda^{j} x^{+} ) > S_{1} ( |\lambda|^{i} + |\lambda|^{j} )
|\lambda|^{\bar{k}/2} \;, \label{eq:regular}
\end{equation}
then the horseshoe $T_{1}(\sigma_{j}^{1})$ and strip $\sigma_{i}^{0}$ {intersect}
regularly.

{\rm (ii)} If
\begin{equation}
d ( \gamma^{-i} y^{-} - c \lambda^{j} x^{+} ) < - S_{1} ( |\lambda|^{i} + |\lambda|^{j} )
|\lambda|^{\bar{k}/2}\;, \label{eq:empty}
\end{equation}
then $T_{1}(\sigma_{j}^{1}) \cap \sigma_{i}^{0} = \emptyset$.
\label{lm:th01}
\end{lm}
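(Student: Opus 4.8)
The plan is to reduce both assertions to an explicit comparison, carried out in the coordinates of $\Pi^+$, between the apex of the parabolic horseshoe $T_1(\sigma_j^1)$ and the height at which the strip $\sigma_i^0$ sits: the sign of $d(\gamma^{-i}y^- - c\lambda^j x^+)$ will decide between the two regimes, while the right-hand sides of (\ref{eq:regular}) and (\ref{eq:empty}) serve to absorb all lower-order corrections.

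First I would describe the two strips in the normal-form coordinates of Section~\ref{sec:1p1}. Writing $\Pi^-=\{|x|\le\delta,\ |y-y^-|\le\delta\}$, the cross-form (\ref{03b}) for $T_0^i$ shows that $\sigma_i^0\subset\Pi^+$ is the thin horizontal band where $y_0=\gamma^{-i}y_i(1+O(\lambda^i))$ with $y_i\in[y^--\delta,y^-+\delta]$; thus $\sigma_i^0$ is centred at height $y=\gamma^{-i}y^-$ and spans the full width of $\Pi^+$ in $x$. Dually, $\sigma_j^1=T_0^j(\sigma_j^0)$ is the thin vertical band where $y_j\in[y^--\delta,y^-+\delta]$ and, again by (\ref{03b}), $x_j=\lambda^j x_0(1+O(\lambda^j))$, so that $x_j=\lambda^j x^+ + O(|\lambda|^j)$ along the strip.

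Next I would push $\sigma_j^1$ forward by the global map. Taking $\mu=0$ in (\ref{eq:t1ext}), the second coordinate of $T_1$ is $\bar y = c\,x_j + d(y_j-y^-)^2 + \text{h.o.t.}$, and substituting the expression for $x_j$ gives
\[
\bar y = c\lambda^j x^+ + d(y_j-y^-)^2 + E ,
\]
where $E$ gathers the higher-order terms of $G$ and the corrections from (\ref{03b}) (controlled by Lemma~\ref{lemma:lm5}) together with the $O(|\lambda|^j)$ transverse thickness of the band. Hence $T_1(\sigma_j^1)$ is a thin parabola with apex at height $\approx c\lambda^j x^+$, and it meets the horizontal strip $\sigma_i^0$ precisely when $\bar y=\gamma^{-i}y^-$ is attainable, i.e. when
\[
(y_j-y^-)^2 = \frac{1}{d}\bigl(\gamma^{-i}y^- - c\lambda^j x^+\bigr) + O(E/d)
\]
has roots inside the strip. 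If (\ref{eq:regular}) holds the right-hand side is positive and bounded below, yielding two roots $y_j=y^-\pm\sqrt{d^{-1}(\gamma^{-i}y^- - c\lambda^j x^+)}$ on opposite sides of the fold $y_j=y^-$; these give the two components $\Delta_{ij}^1,\Delta_{ij}^2$ of Definition~\ref{def:regularinters}(i). If (\ref{eq:empty}) holds the right-hand side is negative, the parabola lies entirely on one side of $\sigma_i^0$, and the intersection is empty. The role of the threshold $S_1(|\lambda|^i+|\lambda|^j)|\lambda|^{\bar k/2}$ is exactly to dominate $E$ and the thicknesses of the two bands, so that the dichotomy is clean.

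Finally I would verify the saddle property (ii) of Definition~\ref{def:regularinters}. On each branch $\Delta_{ij}^\alpha$ the inequality (\ref{eq:regular}) gives $|y_j-y^-|^2\gtrsim(|\lambda|^i+|\lambda|^j)|\lambda|^{\bar k/2}$, so the slope $2d(y_j-y^-)$ of $G$ is bounded away from zero. Differentiating $T_1T_0^j$ in the coordinates of $\sigma_j^0$, using (\ref{03b}) and (\ref{eq:t1ext}), gives
\[
D(T_1T_0^j)\approx
\begin{pmatrix}
a\lambda^j & b\gamma^j\\
c\lambda^j & 2d(y_j-y^-)\gamma^j
\end{pmatrix},
\]
whose determinant equals $\pm1$ by area preservation while its trace is of order $|y_j-y^-|\,|\gamma|^j\gtrsim|\lambda|^{-\bar k/4}\to\infty$. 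Thus one eigenvalue is exponentially large and the other exponentially small, so the map contracts in $x$ and expands in $y$ on each $(T_1T_0^j)^{-1}\Delta_{ij}^\alpha$, which a standard cone argument makes rigorous. The main obstacle is precisely the joint bookkeeping of these last steps: one must keep the higher-order terms of $T_0^j$ and $T_1$ uniformly small against the leading parabola while simultaneously guaranteeing that, on the selected branches, the distance from the fold is large enough for the $\gamma^j$-expansion to beat the $\lambda^j$-contraction — and it is exactly the factor $|\lambda|^{\bar k/2}$ in (\ref{eq:regular})–(\ref{eq:empty}) that reconciles these two competing requirements.
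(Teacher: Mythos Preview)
The paper does not actually prove this lemma: it is quoted from \cite{GS87,GS95}, and the paper only supplies a paragraph of geometric intuition after the statement (the strip $\sigma_i^0$ is a thin horizontal rectangle with central line $y=\gamma^{-i}y^-$, the horseshoe $T_1(\sigma_j^1)$ contains the parabola $y=c\lambda^jx^+ + d(x-x^+)^2/b^2$, and the sign of $d(\gamma^{-i}y^- - c\lambda^jx^+)$ decides whether the line and parabola cross, with the right-hand side of the inequalities absorbing the thickness of strips and horseshoes). Your sketch expands exactly this picture into an argument, so in spirit you are following the intended route; there is nothing to ``compare'' beyond noting that you supply details the paper deliberately omits.

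As a proof sketch your outline is sound for part~(ii) and for part~(i) of Definition~\ref{def:regularinters}. The one place that deserves more care is the saddle-map verification. Your trace/determinant computation shows hyperbolicity of $D(T_1T_0^j)$, but Definition~\ref{def:regularinters}(ii) asks specifically for contraction along~$x$ and expansion along~$y$ (so that the usual horseshoe cone conditions hold between successive strips). The Jacobian you wrote down has its large entry in the $(2,2)$ slot, which is what one wants, but you should check that the invariant cone around $\{x=\text{const}\}$ is mapped into itself with expansion $\asymp |y_j-y^-|\,|\gamma|^j$, and that this quantity is indeed large: from~(\ref{eq:regular}) one only gets $|y_j-y^-|\gtrsim (|\lambda|^i+|\lambda|^j)^{1/2}|\lambda|^{\bar k/4}$, so the expansion is at least $|\gamma|^{j}|\lambda|^{(i+\bar k)/2}\ge |\gamma|^{j/2-\bar k/4}$, which tends to infinity since $j\ge\bar k$. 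That is the estimate underlying your ``$|\lambda|^{-\bar k/4}\to\infty$'' line; making it explicit (and checking the dual contracting cone) would close the argument.
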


It is convenient to reformulate this lemma as follows:
\begin{itemize}
\item[(i')] if the horseshoe $T_{1}(\sigma_{j}^{1})$ has an irregular intersection with the strip
$\sigma_{i}^{0}$ (i.e., the intersection $T_{1}(\sigma_{j}^{1}) \cap \sigma_{i}^{0}$ consists of
one connected component or the corresponding maps from Definition~\ref{def:regularinters} are
not saddle), then the following inequality holds
\begin{equation}
|d| |\gamma^{-i} y^{-} - c \lambda^{j} x^{+}|\leq  S_{1} ( |\lambda|^{i} + |\lambda|^{j} )
|\lambda|^{\bar{k}/2}\;, \label{eq:irr1}
\end{equation}

\item[(ii')] if $T_{1}(\sigma_{j}^{1}) \cap \sigma_{i}^{0} \neq \emptyset$, then the following inequality holds
\begin{equation} d ( \gamma^{-i} y^{-} - c \lambda^{j}
x^{+} ) \geq -  S_{1} ( |\lambda|^{i} + |\lambda|^{j} ) |\lambda|^{\bar{k}/2}\;.
\label{eq:irr2}
\end{equation}
\end{itemize}

The inequalities~(\ref{eq:regular})--(\ref{eq:irr2}) have a rather simple geometrical sense. The
strip $\sigma_i^0$ is a narrow horizontal rectangle in $\Pi^+$ having a central line
$y=\gamma^{-i}y^-$, while, the strip $\sigma_j^1$ is a narrow vertical rectangle in $\Pi^-$ having
a central line $x=\lambda^{j}x^+$. By~(\ref{eq:t1}) and~(\ref{eq:t1ext}), the strip $\sigma_j^1$ is
mapped under $T_1$ into a horseshoe which contains a parabola $y= c\lambda^j x^+ + d(x-x^+)^2/b^2$.
The inequality $d ( \gamma^{-i} y^{-} - c \lambda^{j} x^{+} )>0$ means that the straight line
$y=\gamma^{-i}y^-$ and the parabola are crossed in two points, whereas, the inequality $d (
\gamma^{-i} y^{-} - c \lambda^{j} x^{+} )<0$ implies that these curves do not intersect.
 {The small coefficient in the right side of the inequalities appears in order to
take into account a non-zero thickness of the strips and
horseshoes.}

Note that when the regular intersection exists one can establish certain hyperbolic properties of
(area-preserving) maps $f_0$. The following simplest result of  {such} kind relates to the existence
of Smale horseshoes in the first return maps $T_i$.

\begin{prop} {\rm~\cite{GS87}} Given $f_0$ satisfying conditions \textbf{A} and \textbf{B},
{assume that} the strip $\sigma_{i}^{0}$ and the horseshoe
$T_{1}(\sigma_{i}^{1})$  (with the same number) have a regular intersection for which condition
(\ref{eq:regular}) with  $i=j$ holds. Then the first return map $T_i\equiv
T_1T_0^i:\sigma_{i}^{0}\mapsto\sigma_{i}^{0}$ is a Smale horseshoe map, i.e., the map $T_i$ has a
non-wandering set $\Omega_i$ which is the closed invariant uniformly hyperbolic set such that the
system $T_i|_{\Omega_i}$ is conjugate to the topological Bernoulli shift with two symbols.
\label{Sm-hsi}
\end{prop}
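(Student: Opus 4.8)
The plan is to verify that the first return map $T_i=T_1T_0^i$, restricted to the strip $\sigma_i^0$, satisfies the standard Conley--Moser (Smale horseshoe) conditions, and then to invoke the classical symbolic-dynamics theorem. The hypothesis already supplies most of the geometry: by Definition~\ref{def:regularinters} (taken with $i=j$), the image $T_i(\sigma_i^0)=T_1(\sigma_i^1)$ meets $\sigma_i^0$ in exactly two connected components $\Delta^1,\Delta^2$, and on each preimage $V_\alpha=T_i^{-1}(\Delta^\alpha)\subset\sigma_i^0$ the map is a saddle map, contracting in $x$ and expanding in $y$. What remains is to turn this coordinate-wise saddle behaviour into genuinely uniform hyperbolicity (invariant cone fields) with constants independent of $i\ge\bar k$, to confirm the correct ``Markov'' alignment of the pieces, and to conclude.

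First I would rescale coordinates so that $\sigma_i^0$ becomes a fixed unit square. In the normal-form coordinates on $\Pi^+$ the strip $\sigma_i^0$ is centred at $y=\gamma^{-i}y^-$ with $x$-extent $O(1)$ and $y$-thickness of order $|\lambda|^i$; setting $x=x^++\delta u$ and $y=\gamma^{-i}y^-+\theta v$ with $\theta\sim|\lambda|^i$ normalises it to $\{|u|\le1,\,|v|\le1\}$. Using the cross-form of $T_0^i$ from Lemma~\ref{lemma:lm5}, formula~(\ref{03b}), together with the Taylor expansion~(\ref{eq:t1ext}) of $T_1$, the map $T_i$ takes, to leading order, the rescaled form $\bar u\approx C_1 v$, $\bar v\approx C_0+C_2 u+Dv^2$, where $C_0,C_1,C_2=O(1)$ and the curvature coefficient $D\sim d\,\gamma^{2i}\theta$ is large, of order $|\lambda|^{-i}$. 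This presentation makes the horseshoe geometry transparent: the requirement $\bar v\in[-1,1]$ forces $v^2\sim |C_0|/D$, i.e. $v$ close to one of two values $v^*_\alpha\sim\pm\sqrt{(\gamma^{-i}y^--c\lambda^i x^+)/d}$, so $V_1,V_2$ are two disjoint horizontal sub-strips spanning $\sigma_i^0$ fully in $u$, whose images $\Delta^1,\Delta^2$ are vertical sub-strips spanning it fully in $v$. The positivity of the gap $d(\gamma^{-i}y^--c\lambda^ix^+)$ in~(\ref{eq:regular}) is exactly what guarantees the two real crossings.

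Next I would establish the cone conditions. Differentiating the rescaled map on $V_1\cup V_2$ gives
\[
DT_i\approx\begin{pmatrix} O(|\lambda|^{i}) & C_1 \\ C_2 & 2Dv^*_\alpha\end{pmatrix},
\]
where the expanding entry satisfies $2Dv^*_\alpha\sim |\lambda|^{-i}\sqrt{(\gamma^{-i}y^--c\lambda^ix^+)/d}$. A direct check then shows that the standard unstable cone $\{|du|\le\kappa|dv|\}$ is mapped strictly into itself by $DT_i$ with the $v$-component expanded, while the stable cone $\{|dv|\le\kappa|du|\}$ is mapped into itself by $DT_i^{-1}$ with the $u$-component expanded, the common expansion factor being of order $|\lambda|^{-i}v^*_\alpha$. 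Here the lower bound in~(\ref{eq:regular}), with its margin $|\lambda|^{\bar k/2}$, is essential: it keeps $v^*_\alpha$ bounded away from zero so strongly that the expansion factor, which is of order $|\lambda|^{-i/2}|\lambda|^{\bar k/4}$, exceeds $1$ uniformly for all $i\ge\bar k$ once $\bar k$ is large; the uniform smallness of the cross-form remainders in Lemma~\ref{lemma:lm5} ensures the dropped terms do not spoil these estimates. Since only $|\lambda|$, $|\gamma|$, the sign of $bc$ and the condition $d\neq0$ enter, the argument is identical in all three classes (symplectic, globally and locally non-orientable).

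Finally, with the two correctly aligned components and the uniform invariant cone fields in hand, I would apply the classical horseshoe theorem (the Conley--Moser conditions): the maximal invariant set $\Omega_i=\bigcap_{n\in\mathbb Z}T_i^n(\sigma_i^0)$ is a compact, $T_i$-invariant, uniformly hyperbolic set, and the itinerary map sending each point to the bi-infinite sequence of indices $\alpha\in\{1,2\}$ of the sub-rectangles it visits is a homeomorphism conjugating $T_i|_{\Omega_i}$ to the two-sided Bernoulli shift on two symbols. I expect the technical heart, and the main obstacle, to be the cone estimate of the third paragraph: in the unrescaled coordinates the exponential anisotropy of $\sigma_i^0$ makes $T_i$ look strongly twisted, and only after rescaling to the unit square does the clean saddle structure emerge; one must then check that the expansion rate, which is merely the square root of the naive factor $|\gamma|^i$ because the parabolic fold absorbs half of it, still beats $1$ uniformly, which is precisely what the margin in~(\ref{eq:regular}) secures.
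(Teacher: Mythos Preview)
The paper does not prove this proposition; it is stated with a citation to~\cite{GS87} and used throughout as a known tool, so there is no in-paper proof to compare against. Your proposal is a correct and standard way to establish the result: rescale $\sigma_i^0$ to a unit square, use the cross-form~(\ref{03b}) of $T_0^i$ together with the expansion~(\ref{eq:t1ext}) of $T_1$ to obtain the leading-order H\'enon-like form, verify the Conley--Moser conditions (full-width horizontal sub-strips mapping to full-height vertical sub-strips, plus invariant cone fields), and invoke the classical Smale horseshoe theorem. Your tracking of the expanding entry $2Dv^*_\alpha$ is right, as is the observation that the margin $|\lambda|^{\bar k/2}$ in~(\ref{eq:regular}) is exactly what keeps $v^*_\alpha$ bounded away from zero and hence the expansion factor uniformly larger than~$1$; this is essentially the content one would expect to find in~\cite{GS87,GS95}.

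One small point worth tightening: in your rescaled square the two horizontal sub-strips $V_1,V_2$ sit near $v=v^*_\alpha=O(|\lambda|^{i/2})$, hence very close to the centre and to each other, not at $O(1)$ separation as in the textbook picture. This does not invalidate the argument (the cone conditions only need the expansion factor to be large, not the strips to be well separated), but it means you should check that the images $\Delta^1,\Delta^2$ are genuinely full-height in $v$ and disjoint in $u$; both follow from $C_1,C_2=O(1)$ and the sign separation of $v^*_1,v^*_2$, so the Markov property holds.
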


\subsection{Three classes of APMs with homoclinic tangencies.}
\label{3classes}

{Clearly}, the structure
of integer solutions of the inequalities
(\ref{eq:regular})--(\ref{eq:irr2}) depends, first of all, on the signs of the parameters $\lambda,
\gamma, c$ and $d$.
This means that the structure of the set
$N_{\bar k}$ depends essentially on the type of the homoclinic tangency.
By this principle, the same as for the case of general diffeomorphisms~\cite{GaS73,GS87,GS86}, we
can divide  quadratic homoclinic tangencies in the area-preserving case
into three classes in the following way:
\begin{itemize}
\item {\em The first class} {is connected} to the tangencies
with
$\lambda>0,\gamma=\lambda^{-1}$, $c<0$ and $d<0$.

\item {\em The second class} {has to do with} the tangencies
with
$\lambda>0,\gamma=\lambda^{-1}$, $c < 0$ and $d > 0$.

\item The tangencies of all other types (with all other combinations of the signs of
$\lambda,\gamma,c$ and $d$) {belong to}  {\em the third
class}.
\end{itemize}

We will say also that a given APM 
is of {the first, second or third class}, if it
has a homoclinic tangency of the first, second or third class, respectively.

Concerning maps of the third class, one can obtain formally 14 different combinations of the signs of coefficients $\lambda$,
$c$ and $d$.
However, some of them can be transformed to the others.

For example, if we choose the pair $M^{+\prime}=T_0(M^+)$ and $M^-$ of homoclinic points instead of $M^+$ and $M^-$,  the new global map
$T_1^\prime = T_1T_0: \Pi^- \to T_0(\Pi^+)$.
Then, by~(\ref{eq:nf1}),~(\ref{eq:t1}) and~(\ref{eq:t1ext}), {it} can be written as follows
$$
\bar x = \lambda x^+ + \lambda a x + \lambda b (y-y^-) + \cdots\;,\;\; \bar y = \gamma c x + \gamma d (y-y^-)^2 + \cdots.
$$
If $\lambda$ is positive, we have that ${x^+}^\prime =  \lambda x^+, c^\prime = \gamma c $ and
$d^\prime = \gamma d$.  If $\lambda$ is negative, first we make  the change $x \mapsto -x$ and then
obtain that ${x^+}^\prime = - \lambda x^+ > 0, c^\prime = - \gamma c $ and $d^\prime = \gamma d$.
Thus, in both cases we can write  that
\begin{equation}
\mbox{sign}\;c^\prime = \mbox{sign}\;(c\lambda\gamma),\;
\mbox{sign}\;d^\prime = \mbox{sign}\;(d\gamma)
\label{cdnew}
\end{equation}
and, hence, in the case $\lambda=\gamma^{-1} <0$, by~(\ref{cdnew}), we  can always assume $d>0$.

Besides, in the area-preserving case, there is no necessity to distinguish $f_0$ and $f_0^{-1}$. Moreover, the following relations
take place for the local and global maps $\tilde T_0=T_0^{-1}$ and $\tilde T_1 = T_1^{-1}$ of  $f_0^{-1}$
\begin{equation}
\tilde\lambda = \gamma^{-1}, \; \tilde\gamma = \lambda^{-1}, \;
\tilde c = \frac{1}{c},\; \tilde d = - \frac{d}{cb^2}.
\label{eq:relf-1}
\end{equation}
and, indeed, by~(\ref{eq:t1}) and~(\ref{eq:t1ext}), the map  $\tilde T_1 = T_1^{-1}$ can be written as
$$
y-y^- = \frac{1}{b}(\bar x - x^+) + \cdots,\;\; x = \frac{1}{c}\bar y - \frac{d}{b^2 c}(\bar x - x^+)^2 + \cdots,
$$
which takes the standard form~(\ref{eq:t1}) {if we interchange the variables $x$ and $y$ as well as the constants $x^+$ and $y^-$.}
Thus, we {we will not} distinguish the combinations $\lambda > 0$, $\gamma>0$, $c > 0$, $d > 0$
and $\lambda > 0$, $\gamma>0$, $c > 0$, $d < 0$ (see Figure~\ref{class3>0}). Also, in the case $\lambda \gamma = -1$, we can set $\lambda<0$,
$\gamma>0$.
\begin{figure}[htb]
\centerline{\epsfig{file=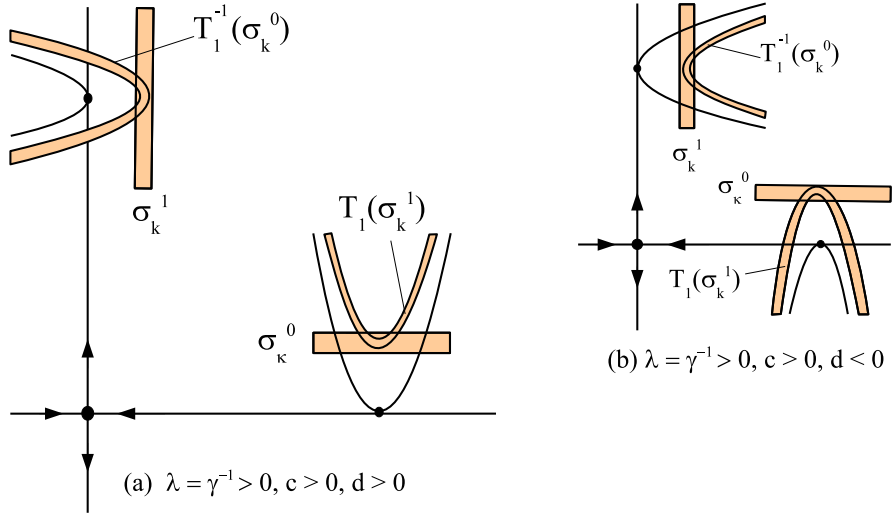, width=13cm
}} \caption{{\footnotesize  {APMs} with a homoclinic tangency of the
third class for $\lambda=\gamma^{-1}>0$: (a) the case $c>0, d>0$; (b) {the case $c>0, d<0$.}
{One can see} the complete analogy
in the geometric structure of the strips and horseshoes in these cases,
especially if we consider the map $f_0^{-1}$ in case (b).
}} \label{class3>0}
\end{figure}

{Therefore, we can reduce the number of different types of homoclinic tangencies of
the third class to the 5 {main} different ones represented in Figure~\ref{fig:table}. We denote by $H_3^i$ , $i = 1, ..., 5$, the
corresponding locally
connected codimension 1 bifurcation surfaces of APMs with homoclinic tangencies. Note that in the locally non-orientable case,
$\lambda \gamma = -1$, we have to consider always, by condition \textbf{D} of Section~\ref{sec:globmap}, pairs $(M^+, M^-)$ of the
homoclinic points of \emph{the needed type} (the corresponding global map $T_1$ is orientable, i.e., bc = -1). This means that the sign of $c$
plays an important r\^{o}le
and, thus, the surfaces $H_3^2$ and $H_3^3$ split into the parts $H_3^{2,1}$ , $H_3^{2,2}$ and $H_3^{3,1}$, $H_3^{3,2}$, respectively, see the
table of Figure~\ref{fig:table}.  {However, we note that the semi-local dynamics of the maps in
$H_3^{2,1}$ and $H_3^{2,2}$ or in $H_3^{3,1}$ and $H_3^{3,2}$ are ``of the same type'' and only
the corresponding invariant sets (e.g. Smale horseshoes $\Omega_i$), when they exist, will be
orientable and non-orientable, respectively, see Figure~\ref{fig:diforient} below.}
}

\begin{figure}[htb]
\centerline{\epsfig{file=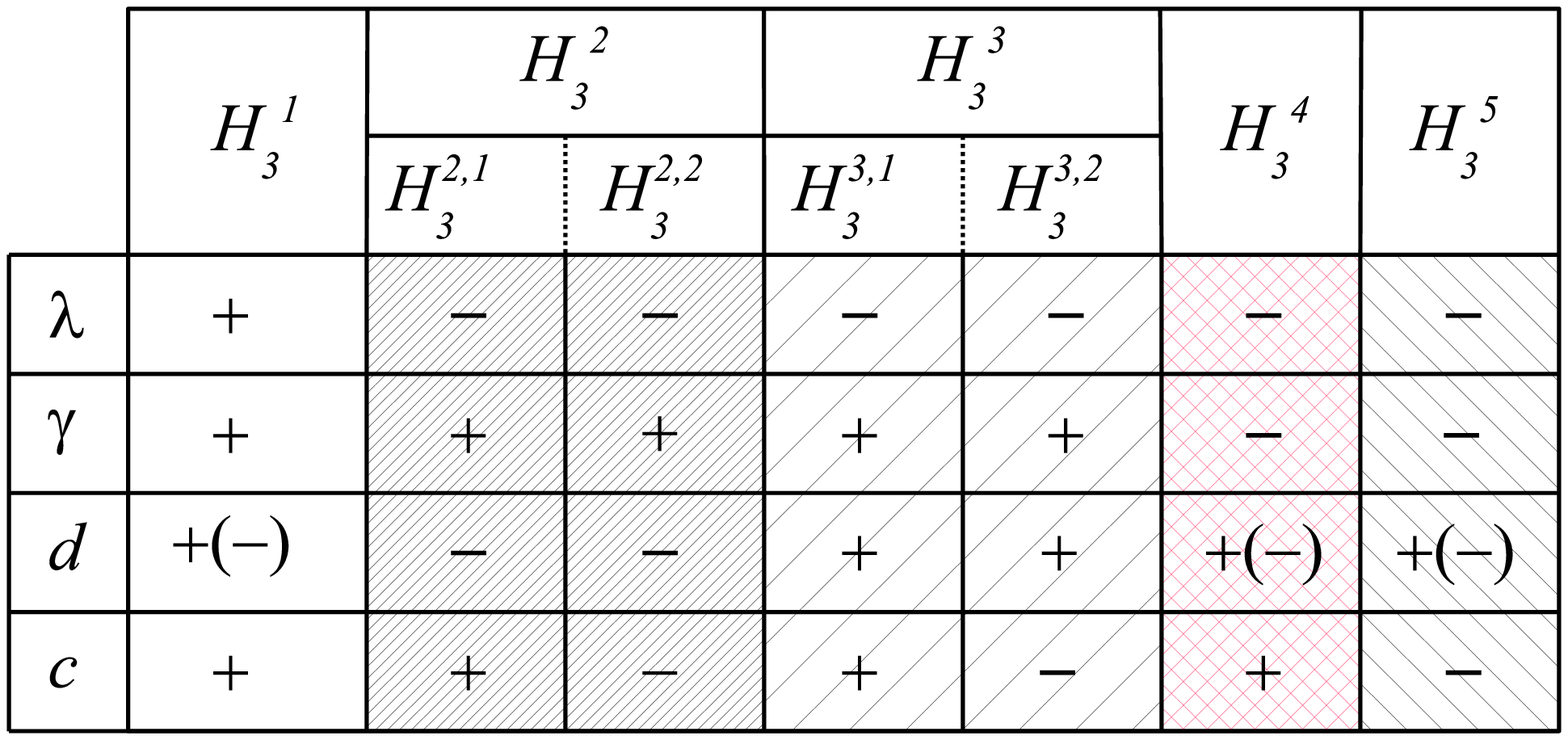, width=11cm
}} \caption{{\footnotesize Five types of APMs 
of the third class.} }
\label{fig:table}
\end{figure}

Note that  {symplectic maps can {arise} in} $H_3^1,H_3^4$ and $H_3^5$,
while maps
in $H_3^2$ and $H_3^3$ are always orientation-reversing. {Besides, non-orientable APMs can have homoclinic tangencies of
all types (in this case, the Jacobian of $T_1$ is negative for homoclinic tangencies of the first and second classes as well as for maps {inside}
$H_3^1,H_3^4$ and $H_3^5$).}

\subsection{Dynamical properties of APMs of the first and second classes.} \label{sec:1-2cl}

For maps of the first class, the inequality~(\ref{eq:empty}) holds for all $i,j\geq\bar k$. It
follows that $T_1(\sigma_j^1)\cap \sigma_i^0 = \emptyset$ for all sufficiently large $i$ and $j$
(see Figure~\ref{class}(a)) which implies the following result:
\begin{prop} {\rm~\cite{GS87}} Let $f_0$ be an 
APM of the first class. Then
there exists such sufficiently large $\bar k$ that  the set $N_{\bar{k}}$  has the trivial
structure: $N_{\bar{k}} = \{O,\Gamma_0\}$.
\label{1cl}
\end{prop}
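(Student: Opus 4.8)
The plan is to read off the conclusion from Lemma~\ref{lm:th01}(ii): I will check that, in the first class, the emptiness inequality~(\ref{eq:empty}) holds \emph{simultaneously} for all pairs $i,j\ge\bar k$ provided $\bar k$ is large enough, and then invoke the structural fact that every orbit of $N_{\bar k}$ other than $O$ and $\Gamma_0$ must visit one of the sets $T_1(\sigma_j^1)\cap\sigma_i^0$.

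First I would fix the signs. By definition of the first class, $\lambda>0$, $\gamma=\lambda^{-1}$ (so $\gamma^{-i}=\lambda^i$ and $|\lambda|=\lambda$), $c<0$ and $d<0$, while $x^+>0$, $y^->0$; note this is independent of whether $T_1$ is orientable ($bc=-1$) or not ($bc=1$), so the single computation covers both symplectic and globally non-orientable first-class maps. Writing $-c=|c|>0$ and $d=-|d|<0$, the quantity governing the intersection becomes a sum of two negative terms,
\[
d\bigl(\gamma^{-i}y^- - c\lambda^j x^+\bigr) \;=\; -|d|\,y^-\,\lambda^i \;-\; |d|\,|c|\,x^+\,\lambda^j .
\]

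Next I would compare this with the right-hand side of~(\ref{eq:empty}). Put $C:=\min\{|d|\,y^-,\ |d|\,|c|\,x^+\}>0$; then the left-hand side above is $\le -C(\lambda^i+\lambda^j)$, so~(\ref{eq:empty}) is implied by $C(\lambda^i+\lambda^j) > S_1(\lambda^i+\lambda^j)\lambda^{\bar k/2}$, i.e.\ by $C>S_1\lambda^{\bar k/2}$. Since $0<\lambda<1$, one has $\lambda^{\bar k/2}\to 0$, so this holds uniformly in $i,j$ as soon as $\bar k$ is taken large enough (larger than the threshold furnished by Lemma~\ref{lm:th01} and large enough that $S_1\lambda^{\bar k/2}<C$). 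Lemma~\ref{lm:th01}(ii) then yields $T_1(\sigma_j^1)\cap\sigma_i^0=\emptyset$ for \emph{all} $i,j\ge\bar k$.

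To finish I would use the reduction recorded at the start of Section~\ref{sec:inth+s}: apart from $O$ and $\Gamma_0$, any orbit lying entirely in $U$ must cross both $\Pi^+$ and $\Pi^-$, and hence must possess a point in some $T_1(\sigma_j^1)\cap\sigma_i^0$ with $i,j\ge\bar k$. Emptiness of all these sets leaves no room for such an orbit, so $N_{\bar k}=\{O,\Gamma_0\}$. The sign bookkeeping and the choice of $\bar k$ are immediate; the step that genuinely deserves care is this last one, namely the justification that a nontrivial recurrent orbit is forced into $\bigcup_{i,j\ge\bar k}T_1(\sigma_j^1)\cap\sigma_i^0$ rather than merely visiting $\Pi^\pm$ once. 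This relies on tracking that each passage through $U_0$ sends a point of a strip $\sigma_i^0$ to $\sigma_i^1$ under $T_0^i$, and that the only return route to $\Pi^+$ is the global map $T_1$, so recurrence necessarily lands the orbit back in some strip.
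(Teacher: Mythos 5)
Your proposal is correct and follows essentially the same route as the paper: the paper likewise observes that for first-class maps (where $\lambda>0$, $\gamma=\lambda^{-1}$, $c<0$, $d<0$) the emptiness inequality~(\ref{eq:empty}) holds for all $i,j\geq\bar k$, concludes via Lemma~\ref{lm:th01}(ii) that $T_1(\sigma_j^1)\cap\sigma_i^0=\emptyset$, and then invokes the structural fact from Section~\ref{sec:inth+s} that any orbit of $N_{\bar k}$ other than $O$ and $\Gamma_0$ must meet one of these intersections. Your explicit sign bookkeeping and choice of $\bar k$ merely fill in details the paper leaves implicit.
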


\begin{figure}[htb]
\centerline{\epsfig{file=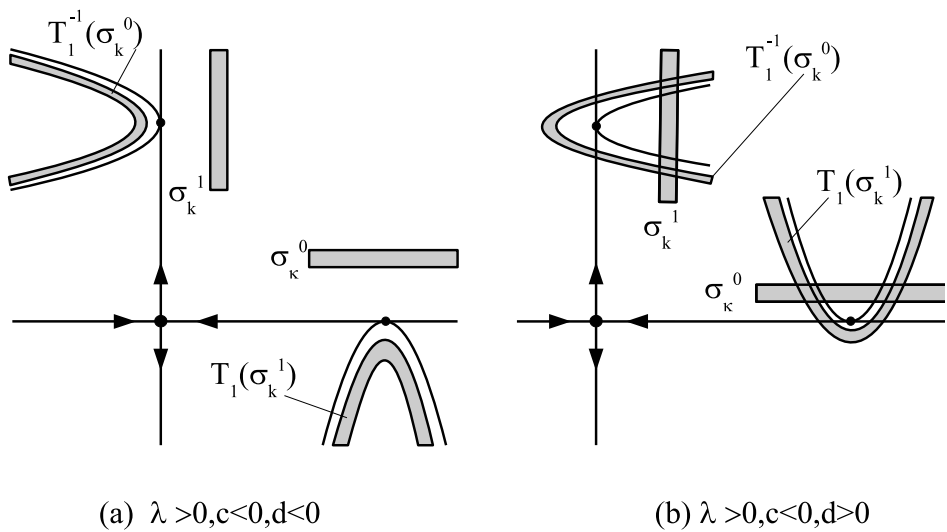, width=13cm
}} \caption{{\footnotesize 
APMs with a homoclinic tangency a) of the first class;
b) of the second class.
}} \label{class}
\end{figure}

For maps of the second class, inequality~(\ref{eq:regular}) holds for all $i,j\geq\bar k$. This means
that all the horseshoes $T_1(\sigma_j^1)$ and strips $\sigma_i^0$ have regular intersection.
Therefore, the set $N_{\bar{k}}$ possesses a non-uniformly hyperbolic structure and all orbits in
$N_{\bar{k}}$, except $\Gamma_0$, are saddle (see also~\cite{GS87}). Moreover, we can give the
exact description of the set $N_{\bar{k}}$. Namely, let $B^3_{\bar{k}+q}$ be a subsystem of the
topological Bernoulli scheme (shift) with three symbols $(0,1,2)$ consisting only of (bi-infinite)
sequences of the form
\begin{equation}
(\ldots,0,\alpha_{s-1},\overbrace {0,\ldots,0,\alpha_s }^{k_s + q}, \overbrace
{0,\ldots,0,\alpha_{s+1}}^{k_{s+1} + q},0,\ldots ),
\label{c8}
\end{equation}
where $\alpha_s\in\{1,2\}$, $k_s\geq\bar k$ for any $s$ and any sequence~(\ref{c8}) does not
contain two neighboring nonzero symbols. We assume also {that} the set of sequences~(\ref{c8}) includes
sequences having strings of infinite length composed by zeros. Let $\tilde
B^3_{\bar{k}+q}$ be a factor-system
resulted from $B^3_{\bar{k}+q}$ {by} {identifying}
homoclinic orbits $(\ldots,0,\ldots,0,1,0,\ldots,0,\ldots)$ and $(\ldots,0,\ldots,0,2,0,\ldots,0,\ldots)$.
 {We denote this orbit by} $\tilde\omega$  {as well as the orbit $(\ldots,0,\ldots,0,\ldots)$ by $\hat O$}.
\begin{prop} {\rm~\cite{GS87,GS01}}
Let $f_0$ be a map of the second class. Then, for any sufficiently large $\bar k$, the system
${f_0}|_{N_{\bar{k}}}$ is topologically conjugate to $\tilde B^3_{\bar{k}+q}$. Moreover, the
conjugating homeomorphism $\hbar$ is such that $\hbar(\Gamma_0) = \tilde\omega$ and $\hbar(O) =
\hat O$.
\label{th2cl}
\end{prop}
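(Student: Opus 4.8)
The plan is to construct the conjugating homeomorphism $\hbar$ explicitly by coding each orbit in $N_{\bar k}$ according to the sequence of strips it visits, and then verify that the admissible codings are exactly the sequences in $\tilde B^3_{\bar k + q}$. First I would recall that, by Lemma~\ref{lm:th01} and the assumption that $f_0$ is of the second class ($\lambda>0$, $\gamma=\lambda^{-1}$, $c<0$, $d>0$), inequality~(\ref{eq:regular}) holds for \emph{all} $i,j\geq\bar k$; hence every horseshoe $T_1(\sigma_j^1)$ intersects every strip $\sigma_i^0$ regularly, and by Definition~\ref{def:regularinters} each such intersection $T_1(\sigma_j^1)\cap\sigma_i^0$ consists of two components on which the return map is a saddle map. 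This uniform regularity is what lets us build a full symbolic description rather than a partial one.

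Next I would set up the coding. Any orbit in $N_{\bar k}$ other than $O$ and $\Gamma_0$ must, by the discussion in Section~\ref{sec3}, pass alternately through $\Pi^+$ and $\Pi^-$; each excursion enters some strip $\sigma_{k_s}^0$ with $k_s\geq\bar k$, travels under $T_0^{k_s}$, and then the global map $T_1$ carries it (after $q$ steps) to the next excursion. To each such orbit I assign a bi-infinite sequence in the alphabet $\{0,1,2\}$ in which a block of $k_s+q$ symbols records one passage: the $k_s$ iterates spent near the saddle are coded by $0$'s, and the symbol $\alpha_s\in\{1,2\}$ records which of the \emph{two} regular-intersection components $\Delta^1$ or $\Delta^2$ the orbit uses when it crosses from $\Pi^+$ to $\Pi^-$. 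The constraint that no two nonzero symbols are adjacent is precisely the statement that each nontrivial passage costs at least $\bar k + q\geq q+1$ steps, so the two-symbol choice cannot occur on consecutive indices; the requirement $k_s\geq\bar k$ encodes the minimal travel time built into $N_{\bar k}$. The infinite strings of zeros correspond to orbits that are forward- or backward-asymptotic to $O$, so that $\hat O$ is the all-zero sequence and the two codings of $\Gamma_0$ are the sequences with a single $1$ (respectively $2$), which we identify to a single point $\tilde\omega$ when forming the factor $\tilde B^3_{\bar k+q}$.

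I would then establish that this coding map $\hbar^{-1}$ is well defined, continuous, and a bijection onto $N_{\bar k}$, and that it conjugates the shift to $f_0$. The standard mechanism is a hyperbolic fixed-point (graph-transform / telescoping) argument: for a prescribed admissible sequence, the nested intersections $\bigcap T_0^{-k_s}T_1^{-1}\Delta^{\alpha_s}$ over the full bi-infinite itinerary shrink to a unique point because, on each regular-intersection component, $T_1T_0^{k_s}$ is a saddle map contracting in $x$ and expanding in $y$ at rates controlled by $\lambda^{k_s}$; uniform hyperbolicity across all blocks with $k_s\geq\bar k$ (this is exactly the content of the regular-intersection estimate~(\ref{eq:regular})) gives the contraction needed for the intersection to be a single point and for continuity in the sequence topology. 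Conjugacy $\hbar\circ f_0 = \sigma\circ\hbar$ follows since advancing one iterate of $f_0$ shifts the itinerary by one symbol, and the identification producing $\tilde B^3_{\bar k+q}$ is compatible with $f_0$ because the two homoclinic codings describe the same orbit $\Gamma_0$. The main obstacle I anticipate is the careful bookkeeping needed to show that the two-symbol alphabet $\{1,2\}$ is genuinely forced (neither component can be discarded, and no third option arises) and that \emph{every} admissible sequence is realized with no extra words excluded — that is, matching the combinatorial description~(\ref{c8}) \emph{exactly}, including the correct handling of the semi-infinite zero tails and the factor identification, rather than merely up to finite ambiguity.
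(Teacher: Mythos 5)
Your proposal is correct and follows the very route this paper relies on: the paper gives no proof of Proposition~\ref{th2cl} at all (it is imported from~\cite{GS87,GS01}), and the one ingredient the paper does supply — that for a second-class map ($\lambda>0$, $\gamma=\lambda^{-1}$, $c<0$, $d>0$) inequality~(\ref{eq:regular}) holds for all $i,j\geq\bar{k}$, so that every intersection $T_1(\sigma_j^1)\cap\sigma_i^0$ is regular — is exactly your first step, while the rest of your outline (block coding of length $k_s+q$, the saddle-map/nested-intersection argument for bijectivity, continuity and conjugacy, and the identification of the two codings of $\Gamma_0$ producing the factor $\tilde B^3_{\bar{k}+q}$) is the standard construction of those references. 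The only wording to adjust is ``uniform hyperbolicity'': as the paper stresses, $N_{\bar{k}}$ itself is only \emph{non-uniformly} hyperbolic (hyperbolicity degenerates along the tangency orbit $\Gamma_0$), but what your telescoping argument actually needs — and what regularity of the intersections delivers — is a per-block estimate: on each component of a regular intersection the map $T_1T_0^{k_s}$ contracts in $x$ at a rate $O(\lambda^{k_s})$ and expands in $y$ at a rate of order $\lambda^{-k_s/2}$, both bounded away from $1$ uniformly over all admissible blocks $k_s\geq\bar{k}$.
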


\subsection{Dynamical properties of APMs of the third class.} \label{sec:3cl}

Consider the following {number}
\begin{equation}
\displaystyle \tau  =
\frac{1}{\ln|\lambda|}\ln\left|\frac{cx^{+}}{y^{-}}\right|.
\label{eq:tau}\end{equation}
It was shown in~\cite{GS87} that $\tau$ is invariant on
two-dimensional diffeomorphisms with a homoclinic tangency to a
neutral saddle (i.e., condition $|\lambda\gamma|  =1$ holds but the
diffeomorphism itself is not necessarily area-preserving). We will
show (see also~\cite{GS87,GS01,GS03}) that $\tau$ can be
effectively used for the description of dynamics of orbits  from a
small neighbourhood of the homoclinic orbit in the case of APMs of
the third class.

We will use below the notations $[\tau]$ and $\{\tau\}$ for the integer and fractional part of
$\tau$, respectively.

\subsubsection{APMs in $H_3^1$.}

First, we consider  maps in $H_3^1$, i.e., APMs with $\lambda=\gamma^{-1}>0$ and $c>0$ corresponding to the first column of the table of
Figure~\ref{fig:table}. For more definiteness, we assume that $d$ is positive (the case $d<0$ is reduced to this for $f_0^{-1}$,
see Section~\ref{3classes}).

\begin{prop}
Let $f_0\in H_3^1$
and  $\tau \neq 0$. Then there exists an integer number $\bar{k}=\bar{k}(\tau)$  such that $\bar{k}(\tau)\to\infty$ as
$\tau\to 0$ and the following holds. \\
{\rm 1)} If $\tau<0$, then
the set $N_{\bar{k}}$ has a trivial structure:
$N_{\bar{k}}=\{O,\Gamma_0 \}$. \\
{\rm 2)} If $\tau>0$, the set $N_{\bar k}$
contains a nontrivial hyperbolic subset including infinitely many horseshoes $\Omega_i$
for all $i\geq\bar k$.
\label{prH31}
\end{prop}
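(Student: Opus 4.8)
The plan is to reduce everything to the intersection criteria of Lemma~\ref{lm:th01}, reading off the relevant signs from the invariant $\tau$. Since $f_0\in H_3^1$ we have $\lambda=\gamma^{-1}>0$, $c>0$ and $d>0$, so $\gamma^{-i}=\lambda^i>0$ and every ingredient of the control quantity $\gamma^{-i}y^{-}-c\lambda^{j}x^{+}$ is positive. Because $\ln|\lambda|<0$, formula~(\ref{eq:tau}) gives $\tau>0\Leftrightarrow cx^{+}<y^{-}$ and $\tau<0\Leftrightarrow cx^{+}>y^{-}$; more precisely $cx^{+}/y^{-}=|\lambda|^{\tau}$. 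First I would record the diagonal case $i=j$, where $\gamma^{-i}y^{-}-c\lambda^{i}x^{+}=\lambda^{i}(y^{-}-cx^{+})$, so that the sign is exactly $\mathrm{sign}\,\tau$ and the common factor $\lambda^{i}$ will cancel against the right-hand sides of~(\ref{eq:regular})--(\ref{eq:empty}).

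For part 2) ($\tau>0$) I would apply~(\ref{eq:regular}) with $i=j$. Substituting $\gamma^{-i}y^{-}-c\lambda^{i}x^{+}=\lambda^{i}(y^{-}-cx^{+})$ and $|\lambda|^{i}+|\lambda|^{j}=2\lambda^{i}$, the factor $\lambda^{i}$ drops out and the inequality becomes $d(y^{-}-cx^{+})>2S_{1}|\lambda|^{\bar k/2}$. The left-hand side is a fixed positive constant and the right-hand side tends to $0$, so for $\bar k$ large it holds simultaneously for every $i\geq\bar k$. Thus each strip $\sigma_i^0$ meets the horseshoe $T_1(\sigma_i^1)$ regularly, and Proposition~\ref{Sm-hsi} then yields, for all $i\geq\bar k$, a Smale horseshoe $\Omega_i\subset\sigma_i^0$ consisting of orbits that stay in $U$, i.e. $\Omega_i\subset N_{\bar k}$. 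This produces the required nontrivial hyperbolic subset with infinitely many horseshoes.

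For part 1) ($\tau<0$) the content is to show that $N_{\bar k}$ carries no recurrence beyond $O$ and $\Gamma_0$, and here I must control all pairs $i,j\geq\bar k$, not just $i=j$. The key step is to prove that $i\geq j$ forces $T_1(\sigma_j^1)\cap\sigma_i^0=\emptyset$. Writing $\gamma^{-i}y^{-}-c\lambda^{j}x^{+}=\lambda^{j}(\lambda^{i-j}y^{-}-cx^{+})$ and using $\lambda^{i-j}\leq 1$ for $i\geq j$, I get $\lambda^{i-j}y^{-}-cx^{+}\leq y^{-}-cx^{+}=-\delta$ with $\delta:=cx^{+}-y^{-}>0$; hence $d(\gamma^{-i}y^{-}-c\lambda^{j}x^{+})\leq -d\delta\lambda^{j}$. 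Since $|\lambda|^{i}+|\lambda|^{j}\leq 2\lambda^{j}$, inequality~(\ref{eq:empty}) follows as soon as $d\delta\geq 2S_{1}|\lambda|^{\bar k/2}$, which again holds for $\bar k$ large, uniformly in $i\geq j\geq\bar k$. Consequently every admissible transition (nonempty intersection) from $\sigma_j^1$ into $\sigma_i^0$ must have $i<j$: the strip index strictly decreases along forward iterations. A strictly decreasing sequence of integers bounded below by $\bar k$ is finite, so no orbit other than $O$ and the homoclinic $\Gamma_0$ can remain in $N_{\bar k}$ under all forward iterates, giving $N_{\bar k}=\{O,\Gamma_0\}$.

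Finally, for the dependence $\bar k(\tau)\to\infty$ I would observe that in both regimes the governing threshold is $|\lambda|^{\bar k/2}\lesssim|cx^{+}-y^{-}|=y^{-}\,\bigl|1-|\lambda|^{\tau}\bigr|$, which behaves like $y^{-}\,|\ln|\lambda||\,|\tau|$ as $\tau\to 0$; hence one is forced to take $\bar k$ of order $2\ln|\tau|/\ln|\lambda|$, which diverges as $\tau\to 0$. I expect the main obstacle to be precisely the $\tau<0$ case: establishing triviality requires the uniform emptiness estimate over the whole range $i\geq j\geq\bar k$ together with the monotonicity (index-decreasing) argument, rather than a single diagonal computation; the $\tau>0$ case and the asymptotics of $\bar k(\tau)$ are then comparatively routine consequences of Lemma~\ref{lm:th01} and Proposition~\ref{Sm-hsi}.
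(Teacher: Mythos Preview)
Your proposal is correct and follows essentially the same approach as the paper: both arguments reduce to Lemma~\ref{lm:th01}, show that for $\tau<0$ one has $T_1(\sigma_j^1)\cap\sigma_i^0=\emptyset$ whenever $i\geq j$ (so the strip index strictly decreases along any forward orbit and recurrence is impossible), and for $\tau>0$ verify~(\ref{eq:regular}) on the diagonal $i=j$ and invoke Proposition~\ref{Sm-hsi}. The only cosmetic difference is that the paper divides through and takes logarithms to recast~(\ref{eq:regular})--(\ref{eq:empty}) as $j-i+\tau \gtrless \pm S|\lambda|^{\bar k/2}$, whereas you work with the raw estimates $\lambda^j(\lambda^{i-j}y^- - cx^+)$; the logarithmic form is slightly cleaner for the subsequent propositions (where non-integrality of $\tau$ enters), but for this statement your direct estimates are equally effective and arguably more transparent.
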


\begin{proof}
1) Consider the inequality~(\ref{eq:empty}) which  can be rewritten as
$$
\lambda^{i} \left(y^{-} + \frac{S_1}{d}\lambda^{\bar{k}/2}\right) < \lambda^j\left(cx^+ -
\frac{S_1}{d}\lambda^{\bar{k}/2}\right),
$$
since $\lambda=\gamma^{-1}>0$,
$c>0$ and $d>0$. Taking logarithms of both sides, we obtain the inequality
\begin{equation}
j - i + \tau < -  S|\lambda|^{\bar k/2},
\label{l92}
\end{equation}
where $S$ is a positive constant (independent of $i,j$ and $\bar k$). By Lemma~\ref{lm:th01}, if
$i\geq\bar k$ and $j\geq\bar k$ satisfy~(\ref{l92}), then  $T_1(\sigma_j^1)\cap \sigma_i^0 =
\emptyset$. Note that the  inequality~(\ref{l92}) with $\tau <0$ has all the solutions of the form
$i\geq j$.  Since $d>0$, this means that, for all $k\ge\bar{k}$, the horseshoes
$T_{1}\sigma^{1}_{k}$ are  {located} 
above the \emph{own} strips $\sigma^{0}_{k}$, see
Figure~\ref{fig:fig6}(a).
Therefore, all orbits, except for
$O$ and $\Gamma_0$, leave the neighbourhood $U$ under forward
iterations of $f_0$.

\begin{figure}[htb]
\centerline{\epsfig{file=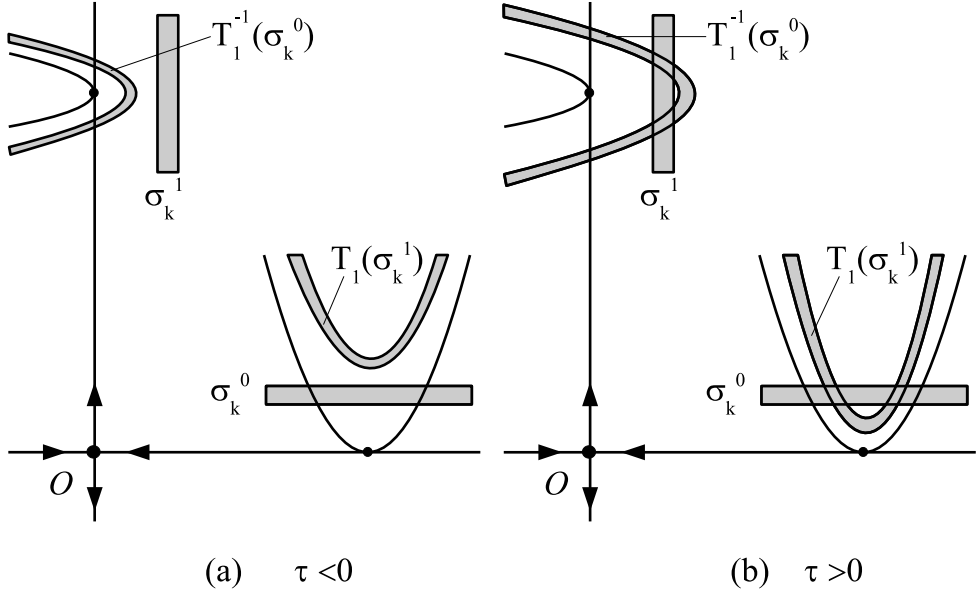, width=14cm
}} \caption{{\footnotesize A creation of the Smale horseshoes $\Omega_k$ at transition from $\tau<0$ to $\tau>0$ in the case
of APMs in $H_3^1$. }} \label{fig:fig6}
\end{figure}

2) Consider the  inequality~(\ref{eq:regular}) which can be written now in the form
\begin{equation}
j - i + \tau >  S|\lambda|^{\bar k/2}. \label{l93}
\end{equation}
When $\tau$ is positive  this inequality, for sufficiently large $\bar k$, has always infinitely many integer
solutions of the form $j\leq i$ including
the solutions  $j = i$.
By Lemma~\ref{lm:th01}, this means that
the horseshoes
$T_{1}\sigma^{1}_{i}$ have regular intersection with the strips $\sigma^{0}_{i}$,
see Figure~\ref{fig:fig6}(b).
By Proposition~\ref{Sm-hsi}, this implies that if $\tau>0$,
the map $f_0\in H_3^1$
has infinitely many Smale horseshoes $\Omega_i$.
\end{proof}

\begin{prop}
Let $f_0\in H_3^1$ and $\tau >0$. If $\tau$ is not integer,
then for some $\bar{k}=\bar{k}(\tau)$, where $\bar{k}(\tau)\to\infty$ as
$\{\tau\}\to 0$, the set $N_{\bar k}$ is completely described in terms of symbolic dynamics.
\label{pr:H31comp}
\end{prop}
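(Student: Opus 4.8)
The plan is to build on Proposition~\ref{prH31}: we already know that for $f_0\in H_3^1$ with $\tau>0$ every intersection $T_1(\sigma_j^1)\cap\sigma_i^0$ is governed by the dichotomy of Lemma~\ref{lm:th01}, whose regular and empty alternatives are exactly~(\ref{l93}) and~(\ref{l92}). The only obstruction to a complete symbolic description is the \emph{irregular} case~(\ref{eq:irr1}), i.e.\ the band $|j-i+\tau|\le S|\lambda|^{\bar k/2}$ in which Lemma~\ref{lm:th01} gives no information. The key observation is arithmetic: since $i,j$ are integers, the quantity $j-i+\tau$ ranges over the shifted lattice $\tau+\mathbb{Z}$, and because $\tau\notin\mathbb{Z}$ its distance to $0$ is bounded below by $\min(\{\tau\},1-\{\tau\})>0$. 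Hence, choosing $\bar k=\bar k(\tau)$ so large that
\[
S|\lambda|^{\bar k/2}<\min(\{\tau\},1-\{\tau\}),
\]
no pair $i,j\ge\bar k$ can satisfy~(\ref{eq:irr1}), so that \emph{every} intersection is either regular or empty. Since the right-hand side tends to $0$, this forces $\bar k(\tau)\to\infty$ as $\{\tau\}\to0$, as claimed.

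Once irregular intersections are excluded, the combinatorics of $N_{\bar k}$ is rigidly determined. Recalling $\gamma=\lambda^{-1}$ and $d>0$, the regular alternative~(\ref{l93}) reduces (up to the exponentially small correction) to $i-j<\tau$, that is to $i-j\le[\tau]$, while the empty alternative~(\ref{l92}) becomes $i-j\ge[\tau]+1$. Thus $T_1(\sigma_j^1)$ meets $\sigma_i^0$ \emph{regularly} precisely when $i-j\le[\tau]$, and the two sets are disjoint otherwise. For an orbit of $N_{\bar k}$ visiting successively the strips $\sigma_{i_s}^0$ (with all $i_s\ge\bar k$) this yields a single Markov-type admissibility rule, $i_{s+1}-i_s\le[\tau]$, and within each regular intersection there are, by Definition~\ref{def:regularinters}, exactly two saddle branches contributing a symbol $\alpha_s\in\{1,2\}$.

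With this rule in hand the conjugacy is constructed exactly as in the second-class case, Proposition~\ref{th2cl} (and as in~\cite{GS87,GS01}): to each orbit of $N_{\bar k}$ one assigns its itinerary, the two-sided sequence recording the branch symbols $\alpha_s$ arranged into blocks exactly as in~(\ref{c8}), with the identifications of $\Gamma_0$ and $O$ performed as there. The difference from the second class is that admissibility now carries the extra transition constraint $i_{s+1}\le i_s+[\tau]$, so the limiting object is a genuine (countable-state) topological Markov chain rather than the full gap-constrained shift. Because every nonempty intersection is regular, Proposition~\ref{Sm-hsi} together with the cone/hyperbolicity estimates underlying Lemma~\ref{lm:th01} guarantees that the coding map is continuous, injective and surjective onto this subshift.

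I expect the principal technical difficulty to lie not in the arithmetic selection of $\bar k$ — which is the genuinely new ingredient — but in verifying that the coding is a \emph{homeomorphism}: one must establish uniform hyperbolicity, i.e.\ invariant cone fields with uniform expansion and contraction rates, simultaneously across the whole countable family of regular intersections, and then check that the resulting local product structure corresponds bijectively to the admissible sequences. These estimates are, however, precisely of the type already secured in the proofs of Lemma~\ref{lm:th01} and Proposition~\ref{Sm-hsi}, so the argument ultimately reduces to combining them with the combinatorial rule $i-j\le[\tau]$ derived above.
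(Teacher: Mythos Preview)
Your proof is correct and follows essentially the same approach as the paper: rewrite the irregular-intersection condition~(\ref{eq:irr1}) as $|j-i+\tau|\le S|\lambda|^{\bar k/2}$, use that $\tau\notin\mathbb{Z}$ to rule out integer solutions for large $\bar k$, and conclude that every intersection is regular or empty so that the symbolic description goes through. You supply more detail than the paper does here (the explicit bound $\min(\{\tau\},1-\{\tau\})$, the transition rule $i_{s+1}-i_s\le[\tau]$, and the discussion of the conjugacy), material which the paper largely defers to the subsequent Proposition~\ref{pr:H31descrip} and the references~\cite{GS87,GS95}.
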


\begin{proof}
Taking logarithm of both sides, the inequality~(\ref{eq:irr1})  is rewritten as follows
\begin{equation}
|j - i + \tau| \leq S |\lambda|^{\bar k/2}. \label{l93r}
\end{equation}
If $\tau$ is not an integer,  this inequality has no integer solutions for sufficiently large  $\bar
k=\bar k(\tau)$. Thus, in this case all the strips and horseshoes have either regular or empty
intersections. Taking into account only the regular intersections we obtain the complete
description for $N_{\bar k}$.
\end{proof}

The description of the set $N_{\bar k}$ from Proposition~\ref{pr:H31comp} can be obtained as follows.
If $\tau>0$ and $\{\tau\}\neq 0$,
we can write the inequality~(\ref{l93}) in the equivalent form
\begin{equation}
j - i + [\tau] + \frac{1}{2} > 0, \;\; i,j \geq \bar k,
\label{l93j}
\end{equation}
where $\bar k = \bar k(\tau)$ is sufficiently large (in any case, $\bar{k}(\tau)\to\infty$ as
$\{\tau\}\to 0$). Let ${\cal B}_{[\tau]}^3$ be a subsystem of $\tilde B^3_{\bar k+q}$ containing
the orbits $\hat O$, $\;\tilde\omega$ and all orbits corresponding to the sequences~(\ref{c8}) in
which
\begin{itemize}
\item {\em all the {integer} numbers $k_s$ and $k_{s+1}$
satisfy the inequality}~(\ref{l93j}) with $k_s=j, k_{s+1} = i$ including also  all the pairs
$k_0,k_1$ where
$k_0=\infty, \bar k \leq k_1 < \infty$.
\end{itemize}
Then, using methods of ~\cite{GS87,GS95}, we prove the following result

\begin{prop}
Assume that the hypotheses of Proposition~{\rm\ref{pr:H31comp}} hold. Then the system ${f_0}|_{N_{\bar k}}$ is
topologically conjugate to ${\cal B}_{[\tau]}^3$.
\label{pr:H31descrip}
\end{prop}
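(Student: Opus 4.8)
The plan is to build an explicit itinerary coding of the orbits in $N_{\bar k}$ and to identify the admissible codes with the symbolic system $\mathcal{B}^3_{[\tau]}$. For an orbit $\{p_n\}$ lying entirely in $U$ (other than $O$ and $\Gamma_0$) I would record the successive strips it visits. Such an orbit necessarily meets $\Pi^+$ in some strip $\sigma^0_{k_s}$ with $k_s\geq\bar k$, then travels through $U_0$ under $T_0^{k_s}$ into $\sigma^1_{k_s}=T_0^{k_s}(\sigma^0_{k_s})\subset\Pi^-$, and finally jumps by the global map into the horseshoe $T_1(\sigma^1_{k_s})$, which must meet the next strip $\sigma^0_{k_{s+1}}$. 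By Proposition~\ref{pr:H31comp}, for $\tau>0$ non-integer and $\bar k=\bar k(\tau)$ large, every such pair of strip and horseshoe intersects either regularly or emptily, with no intermediate ``irregular'' case since (\ref{l93r}) has no integer solutions. Because a regular intersection has exactly two connected components $\Delta^1,\Delta^2$ on each of which $T_1T_0^{k_s}$ is a saddle map, I attach a label $\alpha_s\in\{1,2\}$ to each global transition and a block of zeros of length $k_s+q$ to each local sojourn, producing a bi-infinite sequence of the form (\ref{c8}). Applying $f_0$ shifts this itinerary, so the coding map $\hbar$ carries $f_0$ into the left shift; the saddle $O$ is coded by $\hat O$ and the homoclinic orbit $\Gamma_0$ by the identified one-symbol sequence $\tilde\omega$, respecting the identification built into $\tilde B^3_{\bar k+q}$.

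The second step is to verify that the set of \emph{admissible} sequences obtained this way is exactly $\mathcal{B}^3_{[\tau]}$. A transition $k_s\to k_{s+1}$ is realizable precisely when $T_1(\sigma^1_{k_s})\cap\sigma^0_{k_{s+1}}\neq\emptyset$, and since every nonempty intersection here is regular, this is equivalent to (\ref{l93j}) with $j=k_s$, $i=k_{s+1}$ --- which is exactly the constraint imposed in the definition of $\mathcal{B}^3_{[\tau]}$, including the limiting transitions with $k_0=\infty$ that correspond to orbits asymptotic to $O$. Thus $\hbar$ has image contained in $\mathcal{B}^3_{[\tau]}$ and commutes with the dynamics by construction.

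It then remains to show that $\hbar$ is a homeomorphism onto $\mathcal{B}^3_{[\tau]}$, and this is where the substantive work lies. Following the methods of~\cite{GS87,GS95}, I would establish uniform cone conditions and contraction/expansion estimates on the saddle branches of the regular intersections, using the cross-form expression (\ref{03bnf1}) for $T_0^k$ together with the Taylor form (\ref{eq:t1ext}) of $T_1$; this generalizes the diagonal horseshoe of Proposition~\ref{Sm-hsi} to all admissible off-diagonal transitions $k_s\neq k_{s+1}$. Granted these estimates, a standard graph-transform (nested-intersection) argument shows that for every admissible sequence the corresponding infinite intersection of preimages of strips and horseshoes is nonempty and shrinks to a single orbit, yielding both surjectivity and injectivity of $\hbar$; continuity of $\hbar$ and of $\hbar^{-1}$ follows since two orbits whose codes agree on a long central block are exponentially close.

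The main obstacle I anticipate is establishing the hyperbolicity \emph{uniformly across the whole family} of admissible transitions. As $k_s\to\infty$ the strips $\sigma^0_{k_s}$ become exponentially thin and the horseshoes exponentially stretched, so the cone and overflowing conditions must be checked with constants independent of the indices $k_s\geq\bar k$. This is precisely the degeneration controlled by Lemma~\ref{lm:th01} and the margin $|\lambda|^{\bar k/2}$ appearing in (\ref{eq:regular}); handling it with uniform constants --- rather than the symbolic bookkeeping --- is the crux of the argument.
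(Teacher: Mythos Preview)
Your proposal is correct and is essentially the approach the paper has in mind: the paper itself gives no detailed proof of Proposition~\ref{pr:H31descrip}, but simply states that the result follows ``using methods of~\cite{GS87,GS95}''. Your outline --- itinerary coding by strip indices and branch labels, identification of the admissible transitions with the inequality~(\ref{l93j}), and then a graph-transform/nested-intersection argument driven by the uniform saddle estimates of Lemma~\ref{lm:th01} --- is precisely a fleshing-out of those methods, and your identification of the uniformity of the cone conditions (controlled by the $|\lambda|^{\bar k/2}$ margin in~(\ref{eq:regular})) as the technical crux is accurate.
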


\begin{figure}[htb]
\centerline{\epsfig{file=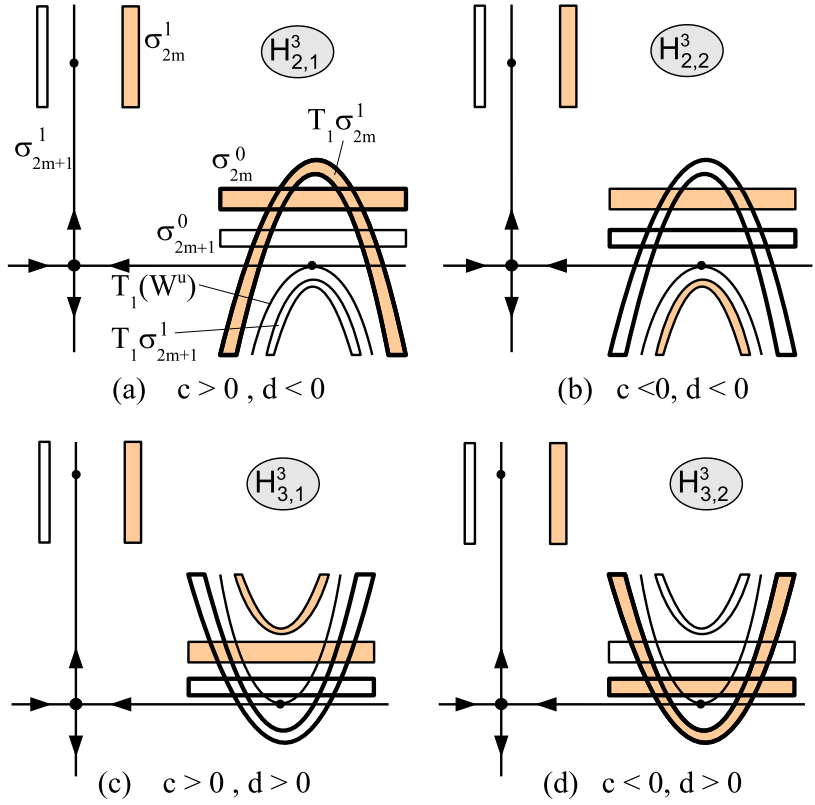, width=12cm
}} \caption{{\footnotesize Geometry of the strips and horseshoes  for the locally non-orientable
case: for APMs maps in $H_{3}^2$ (above) and $H_{3}^3$ (below). It is illustrated that
the Smale horseshoes $\Omega_k$ for the maps in $H_{3,1}^2$ (a) and $H_{3,1}^2$ (b) as well as in
$H_{3,1}^3$ (c) and {$H_{3,2}^3$} (d) {have different orientation}.
} }
\label{fig:diforient}
\end{figure}

\subsubsection{APMs of the third class with negative $\lambda$.}  \label{sec:3clneq}

For maps of the third class with negative $\lambda$,
a {quick glance at the}
Figures~\ref{fig:diforient} and~\ref{fig:f2345} suggests us that the description of $N_{\bar k}$ has {several} peculiarities in
{each of the six cases} under consideration.
Moreover, since $\lambda$
is negative,
it is clear that this description must
include {some} conditions on {the parity of the numbers $i$ and $j$ of the strips
$\sigma_i^0$
and the horseshoes
$T_1(\sigma_j^1)$}.

\begin{figure}
\centerline{\epsfig{file=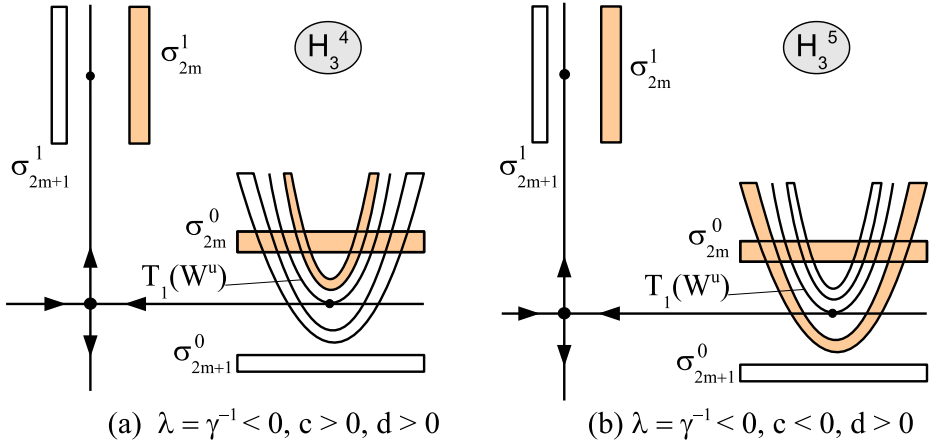, width=14cm
}} \caption{{\footnotesize Geometry of the strips and horseshoes for APMs in (a)  $H_3^4$ and (b) $H_3^5$.
} }
\label{fig:f2345}
\end{figure}

Note that for the maps {inside} $H_3^3$ and $H_3^5$ the set $N_{\bar k}$ has always a nontrivial
structure. In particular,
the following result (which is
quite analogous to the Proposition~\ref{th2cl}) holds.

\begin{prop}{
Let $f_0\in H_3^3, H_3^5$. Then there exists $\bar k$ such that the set $N_{\bar k}$ contains a non-uniformly hyperbolic subset
$\hat N_{\bar k}$ and the following holds
\begin{enumerate}
\item if $f_0\in H_3^{3,1}$, then  $f_0|_{\hat N_{\bar k}}$ is conjugate to $B^{3odd}_{\bar{k}+q}$,
\item if $f_0\in H_3^{3,2}\cup H_3^5$, then $f_0|_{\hat N_{\bar k}}$ is conjugate to $B^{3ev}_{\bar{k}+q}$,
\end{enumerate}
where $B^{3odd}_{\bar{k}+q}\;\;$ (respectively, $B^{3ev}_{\bar{k}+q}$) is a subsystem of the system $B^3_{\bar{k}+q}$ consisting only of
bi-infinite sequences of the form {\rm (\ref{c8})} where all numbers $k_s$ are odd  (respectively, even).}
\label{H25}
\end{prop}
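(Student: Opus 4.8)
The plan is to isolate, inside $N_{\bar k}$, the subsystem carried by the strips whose index has one fixed parity, and to show that on this subsystem \emph{every} return is a regular (saddle) intersection, so that the symbolic coding of Proposition~\ref{th2cl} applies verbatim with the obvious parity restriction. Throughout I use the conventions fixed in Section~\ref{3classes} for the locally non-orientable case: $\lambda<0$, $\gamma>0$, and $d>0$ by~(\ref{cdnew}). The essential observation is that, while the strip $\sigma_i^0$ always sits at the positive height $\gamma^{-i}y^-=|\lambda|^i y^->0$, the horseshoe $T_1(\sigma_j^1)$ is a parabola opening upward (since $d>0$) whose vertex lies at the height $c\lambda^j x^+=c\,(-1)^j|\lambda|^j x^+$, whose sign is governed by the parity of $j$ together with the sign of $c$.

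First I would fix, in each of the cases $H_3^{3,1}$, $H_3^{3,2}$ and $H_3^5$, the \emph{admissible parity} of $j$, namely the one for which $c\,(-1)^j<0$: this is $j$ odd when $c>0$ (case $H_3^{3,1}$) and $j$ even when $c<0$ (case $H_3^{3,2}$ and, with the corresponding configuration, $H_3^5$). For such $j$ the parabola has a \emph{negative} vertex and therefore meets every strip $\sigma_i^0$, all of which lie at positive height, in two branches; quantitatively, the left-hand side of~(\ref{eq:regular}) becomes $d\bigl(|\lambda|^i y^-+|c|\,|\lambda|^j x^+\bigr)$, a sum of two positive terms of order $|\lambda|^i$ and $|\lambda|^j$, which dominates the right-hand side for all $i,j\ge\bar k$ once $\bar k$ is large. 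Thus, restricting attention to returns whose index is of the admissible parity, every transition is a regular intersection in the sense of Definition~\ref{def:regularinters}, independently of the magnitudes involved; in particular the admissible parity closes up, so we obtain a \emph{full} (not merely partial) subshift on these indices.

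Given this, I would define $\hat N_{\bar k}$ to be the set of orbits in $N_{\bar k}$ all of whose returns occur through strips $\sigma_i^0$ with $i$ of the admissible parity, and then conclude exactly as in Propositions~\ref{Sm-hsi} and~\ref{th2cl}. On each admissible strip the relevant branch of the first return map $T_1T_0^j$ is a saddle map, so the maximal invariant set in the union of these regular intersections is a non-uniformly hyperbolic set, the non-uniformity stemming from the degeneration of the contraction and expansion rates as the return index tends to infinity. The itinerary map assigning to an orbit its symbolic sequence~(\ref{c8}) is then a homeomorphism conjugating $f_0|_{\hat N_{\bar k}}$ with the shift, by the same contraction-of-nested-rectangles argument of~\cite{GS87,GS95} used for Proposition~\ref{th2cl}. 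Since by construction every gap $k_s$ equals an admissible-parity index, the image of this map is precisely $B^{3odd}_{\bar k+q}$ in case $H_3^{3,1}$ and $B^{3ev}_{\bar k+q}$ in cases $H_3^{3,2}\cup H_3^5$, which is the assertion. I expect the main obstacle to be the bookkeeping of the first step alone: verifying the sign of the vertex $c\,(-1)^j x^+$ in each of the three configurations and checking that the resulting parity restriction yields the complete subshift; once the geometry of Figures~\ref{fig:diforient} and~\ref{fig:f2345} is translated into these sign conditions, the hyperbolicity and the conjugacy are routine adaptations of the second-class construction.
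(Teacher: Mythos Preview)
Your approach is exactly what the paper intends: it states the proposition without proof, noting only that it ``is quite analogous to Proposition~\ref{th2cl}'', and your argument is precisely that analogy carried out---verify inequality~(\ref{eq:regular}) for all pairs $(i,j)$ of a fixed parity, then invoke the symbolic coding of~\cite{GS87,GS95} already used for the second class. The logic and the conclusion are correct.

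One point needs repair. You set up all three cases under the locally non-orientable conventions $\lambda<0$, $\gamma>0$, but $H_3^5$ is \emph{not} locally non-orientable: there $\lambda=\gamma^{-1}<0$, so $\gamma<0$ as well (see the sign conventions in Section~\ref{3classes} and the remark after Proposition~\ref{prop:H31-comp5}). Consequently your blanket claim that the strip $\sigma_i^0$ ``always sits at the positive height $\gamma^{-i}y^-=|\lambda|^i y^->0$'' is false for $H_3^5$ when $i$ is odd, since then $\gamma^{-i}y^-<0$. This does not damage the final result, because the admissible parity you select for $H_3^5$ is even, and for even $i,j$ one has $\gamma^{-i}=|\lambda|^i>0$ and $\lambda^j=|\lambda|^j>0$ regardless of the sign of $\gamma$; the left side of~(\ref{eq:regular}) then reads $d(|\lambda|^i y^- + |c|\,|\lambda|^j x^+)$ exactly as you write. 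So the fix is cosmetic: treat $H_3^5$ separately with $\gamma<0$, or simply restrict to the admissible parity \emph{before} asserting the sign of $\gamma^{-i}y^-$. (A minor secondary point: $d>0$ for $H_3^3$ comes from the classification table itself, not from~(\ref{cdnew}), which concerns the case $\lambda=\gamma^{-1}<0$.)
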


In particular, it follows from Proposition~\ref{H25} that any map $f_0\in H_3^3, H_3^5$ has always infinitely many Smale
horseshoes $\Omega_i$ with numbers $i$ running all sufficiently large integers of a certain
parity:\footnote{ The horseshoes are orientable for $f_0\in H_3^{3,2}$ and and non-orientable for
$f_0\in H_3^{3,1}$, when $f_0\in H_3^5$ the horseshoes are orientable in the symplectic case and
non-orientable in the globally non-orientable case. }
all the numbers are odd for $f_0\in H_{3,1}^3$ and even for $f_0\in H_{3,2}^3\cup H_3^5$,  see Figure~\ref{fig:diforient}(c)--(d) and
Figure~\ref{fig:f2345}(b).

However, {as for the maps in} $H_3^1$, the horseshoe geometry of APMs in $H_3^i, i=2,3,4$
depends essentially on $\tau$ and,
above all, on the sign of $\tau$ that one can see in Figure~\ref{fig:tau(2-4)}. Below we consider
some results analogous to those for maps in $H_3^1$.

\begin{figure}[htb]
\centerline{\epsfig{file=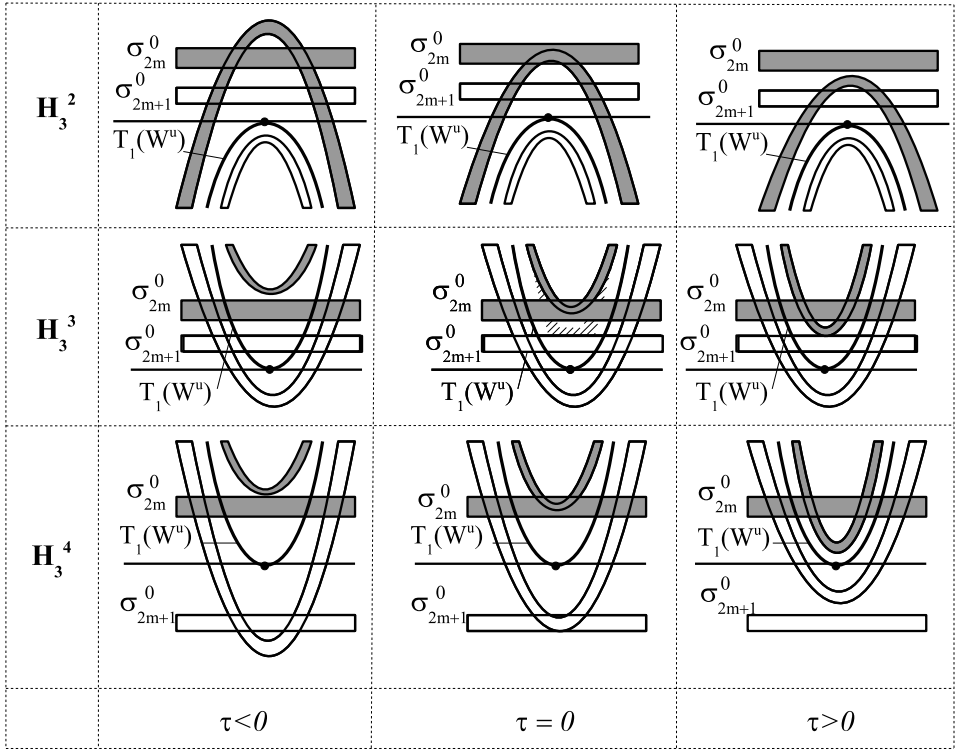, width=14cm
}} \vspace{-1cm}\caption{{\footnotesize A creation/destruction of the new Smale horseshoes $\Omega_{2m}$ and/or $\Omega_{2m+1}$ at transition
from $\tau<0$ to $\tau>0$ in the case of APMs in $H_3^{2,1}$, $H_3^{2,1}$ and {$H_3^{4}$.} }}
\label{fig:tau(2-4)}
\end{figure}

\begin{prop}
Let $f_0\in H_3^2$.
{\rm 1)} If $\tau>0$, then there exists $\bar{k}=\bar{k}(\tau)$, $\bar{k}(\tau)\to\infty$ as
$\tau\to +0$, such that the set $N_{\bar{k}}$ has a trivial structure:
$N_{\bar{k}(\tau)}=\{O,\Gamma_0 \}$.
{\rm 2)}
If $\tau<0$,
the set $N_{\bar k}$ contains infinitely many horseshoes $\Omega_i$, where the numbers $i$ are even for
$f_0\in H_3^{2,1}$ (see Figure~\ref{fig:tau(2-4)}) or odd for $f_0\in H_3^{2,2}$.
\label{H32hyp}
\end{prop}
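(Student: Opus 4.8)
The plan is to follow the same strategy used in the proof of Proposition~\ref{prH31}, namely to analyze the integer solutions of the regularity/emptiness inequalities~(\ref{eq:regular}) and~(\ref{eq:empty}) from Lemma~\ref{lm:th01}, but now taking into account that $\lambda<0$ (so $\lambda\gamma=-1$ and condition \textbf{A}.2 holds). First I would fix the sign conventions for maps in $H_3^2$. Recall that by definition of the second column of the table in Figure~\ref{fig:table} and by condition \textbf{D}, the pair $(M^+,M^-)$ is of the needed type (so $bc=-1$), and for such maps we have $d>0$ with $c>0$ for $f_0\in H_3^{2,1}$ and $c<0$ for $f_0\in H_3^{2,2}$. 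Since $\lambda<0$, I would write $|\lambda|^{i}=(-1)^i\lambda^i$ and $\gamma^{-i}=(-1)^i|\lambda|^i$, so that the left-hand quantity $\gamma^{-i}y^- - c\lambda^j x^+$ in the inequalities carries alternating signs $(-1)^i$ and $(-1)^j$ attached to its two terms. This is exactly the source of the parity conditions flagged in Section~\ref{sec:3clneq}.

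The key computational step is to rewrite~(\ref{eq:regular}) and~(\ref{eq:empty}) in logarithmic form as was done for $H_3^1$. Writing $\gamma^{-i}y^- - c\lambda^j x^+ = (-1)^i|\lambda|^i y^- - (-1)^j c|\lambda|^j x^+$, I would observe that for the two terms to combine into something of a definite sign comparable with $\tau$ (rather than cancelling), the parities of $i$ and $j$ must be adjusted according to $\mathrm{sign}\,c$. Concretely, for $H_3^{2,1}$ ($c>0$), the product $d(\gamma^{-i}y^- - c\lambda^j x^+)$ can be made large and positive (regular intersection) precisely when $i$ and $j$ have appropriate equal parity and $|\lambda|^i y^-$ dominates $c|\lambda|^j x^+$, which after taking logarithms yields an inequality of the shape $i-j-\tau > S|\lambda|^{\bar k/2}$ restricted to the admissible parity class; the complementary inequality $i-j-\tau < -S|\lambda|^{\bar k/2}$ corresponds to the empty case~(\ref{eq:empty}). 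For $H_3^{2,2}$ ($c<0$) the opposite parity class is selected. I would then invoke Lemma~\ref{lm:th01} to conclude regularity or emptiness from these inequalities, and Proposition~\ref{Sm-hsi} to upgrade a regular self-intersection ($i=j$) into a genuine Smale horseshoe $\Omega_i$.

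With this setup, statement~1) ($\tau>0$) follows by showing that the logarithmic inequality forces all intersections to be empty once $\bar k=\bar k(\tau)$ is large enough: just as in the $H_3^1$ case the horseshoes sit above their own strips, so every orbit other than $O$ and $\Gamma_0$ escapes $U$ under forward iteration, giving $N_{\bar k}=\{O,\Gamma_0\}$. For statement~2) ($\tau<0$) I would show that the inequality~(\ref{eq:regular}), restricted to the parity class dictated by $\mathrm{sign}\,c$, acquires infinitely many integer solutions with $i=j$ (the diagonal solutions), so that for every sufficiently large $i$ of the correct parity the strip $\sigma_i^0$ and horseshoe $T_1(\sigma_i^1)$ intersect regularly; Proposition~\ref{Sm-hsi} then produces the horseshoes $\Omega_i$, even for $f_0\in H_3^{2,1}$ and odd for $f_0\in H_3^{2,2}$, as claimed and as depicted in Figure~\ref{fig:tau(2-4)}.

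The main obstacle I anticipate is the careful bookkeeping of the alternating signs: because $\lambda<0$, the quantity $\gamma^{-i}y^- - c\lambda^j x^+$ is not monotone in $i,j$, and one must isolate exactly those parity classes of $(i,j)$ for which the two terms reinforce (rather than partially cancel) so that the logarithmic reduction to a clean inequality in $i-j-\tau$ is legitimate. In particular, I would need to verify that the mixed-parity pairs either fall under the irregular estimate~(\ref{eq:irr1}) or are harmless, and that the surviving same-parity diagonal $i=j$ always lands in the regular regime when $\tau<0$. Once the correct parity class is pinned down, the remaining logarithmic estimates are routine and parallel those already carried out for $H_3^1$.
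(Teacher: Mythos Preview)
Your overall strategy---analyze the inequalities~(\ref{eq:regular}) and~(\ref{eq:empty}) from Lemma~\ref{lm:th01} with parity restrictions, reduce to a logarithmic inequality in $j-i+\tau$, and invoke Proposition~\ref{Sm-hsi} on the diagonal $i=j$---is exactly the paper's approach. However, your setup contains two sign errors that would make the execution fail.

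First, for $f_0\in H_3^2$ one has $d<0$, not $d>0$ (see the parameters recorded in the proof of Theorem~\ref{th:lnorinf}: $H_3^{2,1}$ has $\lambda=-\gamma^{-1}<0,\ c>0,\ d<0$, and $H_3^{2,2}$ has $c<0,\ d<0$). Second, since $\gamma>0$ (indeed $\gamma=|\lambda|^{-1}$), one has $\gamma^{-i}=|\lambda|^i$ for all $i$, not $(-1)^i|\lambda|^i$; the alternating sign appears only in the term $c\lambda^j x^+=(-1)^j c\,|\lambda|^j x^+$. With your signs, the logarithmic reduction for even $i,j$ yields regular self-intersection at $i=j$ precisely when $\tau>0$, which is the opposite of the statement. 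With the correct signs, one first notes (as the paper does) that for $H_3^{2,1}$ the horseshoes $T_1(\sigma_j^1)$ with odd $j$ satisfy~(\ref{eq:empty}) for every $i$ and are therefore irrelevant; restricting to even $i,j$, the regularity condition becomes $j-i+\tau<S|\lambda|^{\bar k/2}$, which for $\tau<0$ admits the diagonal solutions $i=j$ and gives the even-indexed horseshoes $\Omega_i$. The triviality for $\tau>0$ then follows from~(\ref{eq:irr2}), and the escape is under \emph{backward} iteration (since $d<0$), not forward. Once you correct these two signs the rest of your plan goes through and matches the paper.
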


\begin{proof} Consider the case of $f_0\in H_3^{2,1}$ (the proof for the case of $f_0\in H_3^{2,2}$ is analogous).

1)
As one can see in Figure~\ref{fig:diforient}(a), all horseshoes $T_1(\sigma_j^1)$ with odd $j$ do
not intersect any strip $\sigma_i^0 \subset\Pi^+$: here the inequality~(\ref{eq:empty}) holds for
any $i$ when $j$ is odd (since $\lambda<0,\gamma>0,c>0,d<0$). This means that only the strips
$\sigma_i^{0}$ and the horseshoes $T_1(\sigma_j^1)$ with both even $i$ and $j$ can be responsible
for a nontrivial structure of $N_{\bar k}(f_0)$. Now we assume that $i$ and $j$ are even and
consider the inequality~(\ref{eq:irr2}) that can be rewritten as
\begin{equation}
|\lambda|^{i} \left(y^{-} - |d|^{-1} S_{1} |\lambda|^{\bar{k}/2}\right) \leq |\lambda|^{j}\left(|c|
x^{+} + |d|^{-1} S_{1} |\lambda|^{\bar{k}/2}\right).
\label{l10nemp2}
\end{equation}
Taking logarithm of both sides of~(\ref{l10nemp2}), we
obtain the inequality
\begin{equation}
j - i + \tau \leq - S  |\lambda|^{\bar{k}/2}, \;\; i,j=0(\mbox{mod}2).
\label{l10n2-1}
\end{equation}
If $\tau$ is positive, ~(\ref{l10n2-1}) has {only} (integer) solutions $(i,j)$ such that $i>j$. This
means that any horseshoe $T_1(\sigma_j^1)$ can {only} intersect the strips $\sigma_i^0$ whose number
$i$ is greater than $j$. Since $d<0$, this means that the backward semi-orbit of any point from $\Pi^+$
(except for $M^+$) leaves $U$.

2) Let $\tau$ be negative. Consider now the inequality~(\ref{eq:regular}) for even $i$ and $j$
which
can be rewritten as follows
\begin{equation}
j - i + \tau <  S |\lambda|^{\bar{k}/2},\;\; i,j=0(\mbox{mod}2).
\label{l10n2-12}
\end{equation}

Since $\tau<0$, this inequality  has infinitely many integer solutions of the form $i\leq j$, in
particular, it has
the solutions $i=j$ with even $i$. This implies, by Proposition~\ref{Sm-hsi}, the existence of infinitely
many horseshoes $\Omega_i$, where $i$ runs all sufficiently large  even integer numbers.
\end{proof}

{In the same way as} was done in Proposition~\ref{pr:H31comp}, one can give a complete description of the set
$N_{\bar k(\tau)}$ in the case $f_0\in H_3^2$ when $\tau$ is negative and not integer.
Namely, let ${\cal B}_{[\tau],2}^3$ be a subsystem of $\tilde
B^3_{\bar k+q}$ containing the orbits $\hat O$ and
$\tilde\omega$ and such that, in any sequence~(\ref{c8}),
\begin{itemize}
\item {\em every $k_s$ is even}
\item
{\em the numbers $k_s$ and $k_{s+1}$
 satisfy the inequality}
$ k_s - k_{s+1} + [\tau] + \frac{1}{2} <0, $  including such codings with $k_{-1}<\infty, k_0 =
\infty$.
\end{itemize}

\begin{prop}
Let $f_0\in H_3^{2}$ and $\tau<0$ be not integer. Then, for some $\bar{k}=\bar{k}(\tau)\to\infty$
as $\{\tau\}\to 0$, the system ${f_0}|_{N_{\bar k}}$  is topologically conjugate to ${\cal
B}_{[\tau],2}^3$.
\label{prop:H31-comp23}
\end{prop}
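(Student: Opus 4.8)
The plan is to follow the scheme of Proposition~\ref{pr:H31descrip}, adapting it to the parity restriction forced by $\lambda<0$. I treat the case $f_0\in H_3^{2,1}$ in detail (so that $\lambda<0,\gamma>0,c>0,d<0$ and the admissible parity is even, matching the definition of ${\cal B}^3_{[\tau],2}$); the case $f_0\in H_3^{2,2}$ is obtained verbatim by replacing ``even'' with ``odd'' throughout. First I would record, exactly as in part~1 of the proof of Proposition~\ref{H32hyp}, that for odd $j$ the inequality~(\ref{eq:empty}) holds for every $i\geq\bar k$, so every horseshoe $T_1(\sigma_j^1)$ with odd $j$ is disjoint from all strips $\sigma_i^0$. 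Hence only even-indexed strips and horseshoes can contribute to a nontrivial part of $N_{\bar k}$, and I may restrict the whole analysis to $i,j$ even.

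Next I would classify the remaining intersections. By the reformulation~(i')--(ii') of Lemma~\ref{lm:th01}, an intersection $T_1(\sigma_j^1)\cap\sigma_i^0$ that is nonempty but not regular forces the logarithmic inequality~(\ref{l93r}), namely $|j-i+\tau|\leq S|\lambda|^{\bar k/2}$. Since $\tau$ is not an integer and $j-i$ is, there is a fixed gap $\mathrm{dist}(\tau,\mathbb{Z})>0$; choosing $\bar k=\bar k(\tau)$ so large that $S|\lambda|^{\bar k/2}<\mathrm{dist}(\tau,\mathbb{Z})$ (which forces $\bar k(\tau)\to\infty$ as $\{\tau\}\to 0$) makes~(\ref{l93r}) unsolvable in integers. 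Therefore every intersection of even-indexed strips and horseshoes is either empty or regular. Moreover, for even $i,j\geq\bar k$ the regular-intersection inequality~(\ref{l10n2-12}) reduces, by the same largeness of $\bar k$ together with $\tau<0$, to $j-i+\tau<0$, which for integers $i,j$ is equivalent to $j-i+[\tau]+\tfrac12<0$. This is exactly the admissibility condition appearing in the definition of ${\cal B}^3_{[\tau],2}$ with $k_s=j$, $k_{s+1}=i$.

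With this combinatorial dictionary in place, I would construct the conjugacy following the coding machinery of~\cite{GS87,GS95} used for Propositions~\ref{th2cl} and~\ref{pr:H31descrip}. To each orbit of $N_{\bar k}$ I assign its itinerary: a symbol $0$ for each iterate in $U_0$, and one of the two symbols $\alpha_s\in\{1,2\}$ labelling the two connected components $\Delta_{ij}^1,\Delta_{ij}^2$ of a regular intersection at each homoclinic excursion, with the block of $0$'s between consecutive nonzero symbols having length $k_s+q$ as in~(\ref{c8}). The saddle property in Definition~\ref{def:regularinters}(ii) supplies uniform expansion/contraction with invariant cone fields on the preimages of the $\Delta_{ij}^\alpha$, so that $N_{\bar k}$ is realized as the intersection of an infinite family of nested, hyperbolically returning strips; the standard arguments then show the itinerary map $\hbar$ is a homeomorphism conjugating $f_0|_{N_{\bar k}}$ to the shift, with $\hbar(\Gamma_0)=\tilde\omega$ and $\hbar(O)=\hat O$ as in Proposition~\ref{th2cl}. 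A bi-infinite sequence of the form~(\ref{c8}) is realizable precisely when every $k_s$ is even and every consecutive pair satisfies $k_s-k_{s+1}+[\tau]+\tfrac12<0$, i.e.\ precisely when it belongs to ${\cal B}^3_{[\tau],2}$, which identifies the image of $\hbar$ and finishes the proof.

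I expect the only genuinely new verifications to be the parity restriction and the reduction of~(\ref{l10n2-12}) to the half-integer inequality $j-i+[\tau]+\tfrac12<0$; these are elementary. The main obstacle, as in the earlier propositions, is the hyperbolic-coding step: establishing that the nested regular intersections carry a uniformly hyperbolic structure and that the resulting itinerary map is genuinely a homeomorphism onto the admissible subshift. I would import this directly from~\cite{GS87,GS95}, since the geometry of the even-indexed strips and horseshoes here is identical (up to the parity bookkeeping) to that already handled there and in Proposition~\ref{pr:H31descrip}.
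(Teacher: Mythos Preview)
Your proposal is correct and follows essentially the same route as the paper, which gives no independent proof of this proposition but simply says it is obtained ``in the same way as was done in Proposition~\ref{pr:H31comp}'' and then invokes the coding machinery of~\cite{GS87,GS95}; you have filled in exactly those steps (parity restriction from Proposition~\ref{H32hyp}, exclusion of irregular intersections via~(\ref{l93r}) for non-integer $\tau$, and the reduction of the regular-intersection condition to the half-integer inequality defining ${\cal B}^3_{[\tau],2}$). One small remark: in the reduction of~(\ref{l10n2-12}) to $j-i+\tau<0$ it is the non-integrality of $\tau$ (not $\tau<0$) that does the work, since once irregular intersections are excluded the dichotomy regular/empty is decided by the sign of $j-i+\tau$; your conclusion is unaffected.
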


Note that Proposition~\ref{H25} deals with horseshoes $\Omega_i$ which exist always for maps
in $H_3^3$ and $H_3^5$. However, other horseshoes can appear here when varying $\tau$ that the
following result shows for  maps in $H_3^3$.

\begin{prop}
{
If $\tau<0$, then any map $f_0\in H_{3,1}^3$ (respectively, $f_0\in H_{3,2}^3$) has no horseshoes $\Omega_i$ with sufficiently
large even $i$ (respectively, with odd $i$), whereas for any $\tau>0$ infinitely many such horseshoes exists.}
\label{H33}
\end{prop}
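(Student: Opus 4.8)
The plan is to reproduce, for the ``competing'' parity of the index, the same log-linearization of the intersection conditions~(\ref{eq:regular})--(\ref{eq:empty}) that was used in Propositions~\ref{prH31} and~\ref{H32hyp}, exploiting the crucial simplification that occurs when $j=i$. Recall that for $f_0\in H_3^3$ one has $\lambda<0$, $\gamma>0$ with $\lambda\gamma=-1$ (so that $\gamma^{-i}=|\lambda|^i$ and $\lambda^i=(-1)^i|\lambda|^i$) and $d>0$, while $c>0$ for $f_0\in H_{3,1}^3$ and $c<0$ for $f_0\in H_{3,2}^3$. By Proposition~\ref{H25} the horseshoes of the ``own'' parity (odd $i$ for $H_{3,1}^3$, even $i$ for $H_{3,2}^3$) always exist, so I only have to treat the complementary parity, namely even $i$ for $H_{3,1}^3$ and odd $i$ for $H_{3,2}^3$; in both cases one checks $c(-1)^i=|c|>0$. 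Since the horseshoe $\Omega_i$ is, by Proposition~\ref{Sm-hsi}, exactly the invariant set of the first return map $T_i=T_1T_0^i$, its existence is governed by the intersection of $\sigma_i^0$ with $T_1(\sigma_i^1)$, i.e. by~(\ref{eq:regular})--(\ref{eq:empty}) taken at $j=i$.

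First I would substitute $\gamma^{-i}=|\lambda|^i$, $\lambda^i=(-1)^i|\lambda|^i$ and $c(-1)^i=|c|$ into the regular-intersection condition~(\ref{eq:regular}) with $j=i$, obtaining $d|\lambda|^i\bigl(y^- - |c|x^+\bigr) > 2S_1|\lambda|^i|\lambda|^{\bar k/2}$. Dividing by $d|\lambda|^i>0$ removes all dependence on $i$ and leaves the single inequality $y^- - |c|x^+ > (2S_1/d)|\lambda|^{\bar k/2}$. Exactly as in the passage leading to~(\ref{l93}), taking logarithms and dividing by $\ln|\lambda|<0$ rewrites this as $\tau > S|\lambda|^{\bar k/2}$ for some $S>0$ independent of $i$, where $\tau$ is the invariant~(\ref{eq:tau}); the only arithmetic point is that $\tau>0$ is equivalent to $|c|x^+<y^-$. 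The key structural observation is that, because $j=i$ cancels the factor $|\lambda|^i$, this threshold is the \emph{same} for every index of the complementary parity.

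The conclusion then follows at once. If $\tau>0$, choose $\bar k=\bar k(\tau)$ so large that $(2S_1/d)|\lambda|^{\bar k/2}<y^- - |c|x^+$; then~(\ref{eq:regular}) holds at $j=i$ for every $i\geq\bar k$ of the complementary parity, and Proposition~\ref{Sm-hsi} yields a Smale horseshoe $\Omega_i$ for each of these infinitely many $i$. If $\tau<0$, then $y^- - |c|x^+<0$, so choosing $\bar k(\tau)$ with $(2S_1/d)|\lambda|^{\bar k/2}<|c|x^+-y^-$ makes the empty-intersection condition~(\ref{eq:empty}) hold at $j=i$ for all such $i$; by Lemma~\ref{lm:th01}(ii) one gets $T_1(\sigma_i^1)\cap\sigma_i^0=\emptyset$, so no point of $\sigma_i^0$ returns to $\sigma_i^0$ under $T_i$ and $\Omega_i$ cannot exist. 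In both cases $\bar k(\tau)\to\infty$ as $\tau\to0$, in agreement with the earlier statements.

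The argument is essentially bookkeeping built on Lemma~\ref{lm:th01} and Proposition~\ref{Sm-hsi}; the one place demanding care is the sign-and-parity accounting. What must be got right is that, for $\lambda\gamma=-1$, the terms $\gamma^{-i}y^-$ and $c\lambda^i x^+$ both carry the fixed factor $|\lambda|^i$, but the second also carries $(-1)^i$, so the parity of $i$ decides whether the terms reinforce (yielding the always-present horseshoes of Proposition~\ref{H25}) or compete (yielding the $\tau$-controlled ones treated here). Once this is recognised, the $i$-independence of the threshold after dividing by $|\lambda|^i$ makes the transition at $\tau=0$ simultaneous for all large indices of the relevant parity, which is exactly the assertion of the proposition.
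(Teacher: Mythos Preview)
Your proof is correct and follows exactly the approach the paper intends: it simply says ``The proof is quite analogous to the proof of Proposition~\ref{H32hyp} and we omit it,'' and you have supplied precisely that analogous argument, specializing inequalities~(\ref{eq:regular})--(\ref{eq:empty}) to $j=i$ of the complementary parity and invoking Lemma~\ref{lm:th01} and Proposition~\ref{Sm-hsi}. The sign/parity bookkeeping (in particular $c(-1)^i=|c|>0$ in both subcases) and the observation that the $|\lambda|^i$ factor cancels, making the threshold uniform in $i$, are exactly the points that make the argument work.
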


The proof is quite analogous to the proof of Proposition~\ref{H32hyp} and we omit it.

Concerning maps in $H_3^4$, we note that they have a specific peculiarity related to the fact that
the value $\tau=0$ is here a ``distinctive switch''
{between} the strips $\sigma_0^i$ and
horseshoes $T_1(\sigma_j^1)$ involved {in the} dynamics, {since they only occur} for even $i$ and $j$ {when}
$\tau>0$ and for odd $i$ and $j$ {when}  $\tau<0$.
Moreover, in this case the dynamics can be trivial only in the case $\tau=0$.
The corresponding result can be formulated as follows.

\begin{prop}
Let $f_0\in H_3^4$ and $\tau \neq 0$. Then there exists an integer $\bar k = \bar k(\tau) \to \infty$ as $\tau \to 0$ {such} that
the following holds.
If $\tau>0$ (respectively, $\tau<0$), the set $N_{\bar k}$ contains infinitely many horseshoes
$\Omega_i$, where $i\geq \bar k$ runs for all sufficiently large even integers (respectively, odd integers). Moreover,
$N_{\bar k}$ does not contain any orbit having intersection points with some strip $\sigma_j^0$ for
odd $j$ (respectively, for even $j$).
\label{H34}
\end{prop}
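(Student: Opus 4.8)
The plan is to specialize the intersection criteria of Lemma~\ref{lm:th01} to the sign pattern of $H_3^4$ and then read off the admissible symbolic itineraries. First I would fix the parameters: for $f_0\in H_3^4$ one has $\lambda<0$, $\gamma=\lambda^{-1}<0$ (so $\lambda\gamma=1$), and, after the normalization~(\ref{cdnew}), $d>0$; the sign $c>0$ is precisely what separates $H_3^4$ from $H_3^5$ (where $c<0$ would make one parity hyperbolic for every $\tau$). Writing $\gamma^{-i}=(-1)^i|\lambda|^i$ and $\lambda^j=(-1)^j|\lambda|^j$, the left-hand side of~(\ref{eq:regular})--(\ref{eq:irr2}) becomes
\[
Q(i,j):=d\bigl(\gamma^{-i}y^- - c\,\lambda^{j} x^+\bigr)=d\bigl[(-1)^{i}|\lambda|^{i}y^- - c\,(-1)^{j}|\lambda|^{j}x^+\bigr].
\]
On the diagonal this reduces to $Q(i,i)=d(-1)^{i}|\lambda|^{i}(y^--c\,x^+)$, and since $c,x^+,y^->0$ we have $\tau>0\Leftrightarrow c\,x^+<y^-\Leftrightarrow y^--c\,x^+>0$ (and the reverse for $\tau<0$). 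This one computation already produces the parity switch.

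For the existence of the horseshoes I would argue exactly as in Proposition~\ref{prH31}(2). Assume $\tau>0$. For even $i$, $Q(i,i)=d|\lambda|^{i}(y^--c\,x^+)$, so condition~(\ref{eq:regular}) with $i=j$ reads $d(y^--c\,x^+)>2S_1|\lambda|^{\bar k/2}$; as $d(y^--c\,x^+)$ is a fixed positive constant, this holds once $\bar k$ is large enough, and then Proposition~\ref{Sm-hsi} yields a Smale horseshoe $\Omega_i$. Since this works for every even $i\ge\bar k$, we obtain infinitely many $\Omega_i$ with even indices; the same argument with odd $i$ handles $\tau<0$. Because $y^--c\,x^+\to 0$ as $\tau\to 0$, the threshold forces $\bar k(\tau)\to\infty$, as required.

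The heart of the proof, and the step I expect to be the main obstacle, is the assertion that no orbit of $N_{\bar k}$ meets an odd strip when $\tau>0$. I would translate nonemptiness of $T_1(\sigma_m^1)\cap\sigma_j^0$ into an admissibility rule for the codings~(\ref{c8}) and rule out odd strip indices by backward monotonicity. Fix an odd strip index $j\ge\bar k$ and inspect $Q(j,m)$. For even $m$, $Q(j,m)=-d(|\lambda|^{j}y^-+c|\lambda|^{m}x^+)$, which violates~(\ref{eq:irr2}) once $\bar k$ is large, so even horseshoes miss odd strips; the limiting curve $l_u$ ($m\to\infty$) gives $Q(j,\infty)=-d|\lambda|^{j}y^-<0$, again empty, so a backward passage through an odd strip cannot come from the saddle $O$ itself. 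For odd $m$, $Q(j,m)=d(c|\lambda|^{m}x^+-|\lambda|^{j}y^-)$, and taking logarithms (absorbing the thickness term on the right of Lemma~\ref{lm:th01} into the inequality) shows that nonemptiness forces $m<j-\tau<j$. Hence the passage of an orbit through an odd strip $\sigma_j^0$ can be preceded only by a passage through an odd strip of strictly smaller index, and never by $\infty$. Tracing the orbit backward therefore yields an infinite strictly decreasing sequence of odd integers, all $\ge\bar k$ — a contradiction — so no bi-infinite orbit of $N_{\bar k}$ contains an odd strip; the case $\tau<0$ (excluding even strips) follows by interchanging the parities. The delicate points to control are the uniformity of these estimates in $i,j,m\ge\bar k$ and the correct treatment of the symbol $\infty$, which is exactly what closes off the ``escape to $O$'' loophole in the monotonicity argument.
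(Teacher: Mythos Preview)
Your proposal is correct and follows essentially the same route as the paper: specialize Lemma~\ref{lm:th01} to the sign pattern $\lambda=\gamma^{-1}<0$, $c>0$, $d>0$, obtain the horseshoes $\Omega_i$ from the diagonal case $i=j$ via Proposition~\ref{Sm-hsi}, and then exclude the wrong-parity strips by a monotone descent argument on the strip index under backward iteration. The paper's proof phrases the last step as ``forward the odd index only increases, backward it only decreases,'' but the content is identical to your infinite-descent argument; your explicit treatment of the limiting case $m\to\infty$ (ruling out backward escape to $O$ along $W^u$) makes a point the paper leaves implicit. The only imprecision is the intermediate inequality ``$m<j-\tau$'': what the logarithm actually gives is $m-j+\tau\le S|\lambda|^{\bar k/2}$, hence $m<j$ once $\bar k$ is large enough, which is all you use.
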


\begin{proof} Consider inequality~(\ref{eq:regular}) for even $i$ and $j$.
Since  $c>0,d>0,\lambda = \gamma^{-1}<0$, it  can be rewritten as $|d|(|\lambda|^i y^- - |c| |\lambda|^j x^+) > S_{ij}(\bar k)$ or, accordingly, as
\begin{equation}
j - i + \tau > S |\lambda|^{\bar{k}/2}, \;\; i,j=0(\mbox{mod}\;2).
\label{l10n2-3}
\end{equation}
{Clearly}, if $\tau>0$  and $\bar k$ is sufficiently large, this inequality has infinitely
many integer solutions with $j\leq i$ and, in particular, with $i=j$. This implies, by
Proposition~\ref{Sm-hsi}, that  infinitely many horseshoes $\Omega_i$ with even $i$ exist in $N_{\bar k}$.

The inequality~(\ref{eq:regular}) for odd $i$ and $j$ can be
written as $|d|(- |\lambda|^i y^- + |c| |\lambda|^j x^+) > S_{ij}(\bar k)$ or as
$$
j - i + \tau < - S |\lambda|^{\bar{k}/2}, \;\; i,j=1(\mbox{mod}2).
$$
If $\tau<0$, this inequality has infinitely many integer solutions of the form $j\geq i$ including
$j=i$.
This implies that (when $\tau<0$) any map $f_0\in H_3^4$ has infinitely many horseshoes $\Omega_i$
with odd $i$.

For $f_0\in H_3^4$ we have always that  $T_1(\sigma_j^1)\cap\sigma_i^0=\emptyset$ for even $j$ and
odd $i$, since the inequality~(\ref{eq:empty}) holds here (see also Figure~\ref{fig:f2345}(a)).
Let $\tau >0$. We consider some  strip
$\sigma_j^0$ with odd $j$.
Then the horseshoe $T_1(\sigma_j^1)$ can
intersect only those strips  $\sigma_i^0$ with odd numbers $i$ satisfying the inequality~(\ref{eq:irr2}) which is equivalent to
the inequality $j - i + \tau \leq S |\lambda|^{\bar{k}/2}$
for odd $i$ and $j$. Evidently, the last inequality has integer solutions {only} of the form $i>j$,
since $\tau>0$. Thus, there are no points on $\sigma_j^0$ which can return back to $\sigma_j^0$
after forward iterations {by} $f_0$. Moreover, under backward iterations all points of $\sigma_j^0$
leave $U$, since if $T_1^{-1}(\sigma_j^0)\cap \sigma_l^0 \neq\emptyset$, then $l$ is odd and $l<i$.
Thus, if $\tau>0$, only the strips $\sigma_k^0$ with even $k$ can contain points of orbits from
$N_{\bar k}$. The case $\tau<0$ is proved similarly.
\end{proof}

Concerning the maps in $H_3^5$, we note that they are not sensitive to the resonance $\tau=0$ and, moreover, like APMs of the second class,
the set $N_{\bar k}$ admits here a complete description when $\tau\in (-1, +1)$.
Indeed, consider a subsystem ${\cal B}^3_{0,5}$ of $B^3_{\bar k +q}$ containing the orbits $\hat O$ and
$\tilde\omega$ and such that, in any sequence~(\ref{c8}),

\begin{itemize}
\item if $k_s$ is odd, then $k_{s+1}$ is even and such that $k_{s+1} < k_s$;
\item if $k_s$ is even, then either  $k_{s+1}$ is any even {integer} ($\geq \bar k$) or $k_{s+1}$ is odd and such that $k_{s+1} > k_s$.
\end{itemize}

\begin{prop}
Let $f_0\in H_3^{5}$ and $\tau \in (-1,+1)$. Then, for some $\bar{k}=\bar{k}(\tau)\to\infty$
as $|\tau|\to 1$, the system ${f_0}|_{N_{\bar k}}$  is topologically conjugate to ${\cal
B}^3_{0,5}$.
\label{prop:H31-comp5}
\end{prop}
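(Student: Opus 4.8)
The plan is to follow the same symbolic-dynamics machinery used in Propositions~\ref{th2cl},~\ref{prH31} and~\ref{H32hyp}, but now tracking carefully the parity of the strip and horseshoe indices. For $f_0\in H_3^5$ we have $\lambda=\gamma^{-1}<0$, $c>0$, $d>0$ (after the normalizations of Section~\ref{3classes}). Because $\lambda<0$, the sign of $\lambda^j x^+$ depends on the parity of $j$, and likewise the sign of $\gamma^{-i}y^-=\lambda^i y^-$ depends on the parity of $i$. The key computation is therefore to substitute these signs into the regular-intersection criterion~(\ref{eq:regular}) and the nonemptiness/irregularity criteria~(\ref{eq:irr1})--(\ref{eq:irr2}) of Lemma~\ref{lm:th01}, in each of the four parity combinations of $(i,j)$. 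Taking logarithms as in the earlier proofs converts each criterion into a linear inequality in $j-i$ shifted by $\tau$, up to an exponentially small error $S|\lambda|^{\bar k/2}$.

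First I would run the four parity cases. When $i$ and $j$ have opposite parities, one of $\lambda^i y^-$, $\lambda^j x^+$ is positive and the other negative, so $d(\gamma^{-i}y^--c\lambda^j x^+)$ has a definite sign independent of $\tau$; this is what forces the rigid ordering constraints. Concretely I expect: for $k_s$ odd and $k_{s+1}$ even the intersection is regular precisely when $k_{s+1}<k_s$, matching the first bullet of the definition of ${\cal B}^3_{0,5}$; for $k_s$ even and $k_{s+1}$ odd it is regular precisely when $k_{s+1}>k_s$, matching the second bullet. For the two same-parity cases the $\tau$-dependence reappears: for both even the inequality becomes $j-i+\tau>S|\lambda|^{\bar k/2}$, which for $\tau\in(-1,1)$ and large $\bar k=\bar k(\tau)$ always admits all solutions with $j\le i$ (in particular $j=i$), giving unconditionally the even-indexed horseshoes of Proposition~\ref{H25}; for both odd the condition is similarly verified to allow any admissible pair, explaining why even $k_{s+1}$ is unrestricted. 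The constraint $\tau\in(-1,+1)$ is exactly what guarantees, via the irregular-intersection inequality~(\ref{l93r})-type bound $|j-i+\tau|\le S|\lambda|^{\bar k/2}$, that no irregular intersections survive once $\bar k=\bar k(\tau)$ is taken large enough, with $\bar k(\tau)\to\infty$ as $|\tau|\to 1$.

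Having established that every admissible pair of successive nonzero symbols corresponds to a regular intersection and that all other intersections are empty, I would then invoke the standard construction of the conjugating homeomorphism: each orbit in $N_{\bar k}$ is coded by its itinerary of visits to the strips $\sigma_i^0$, the nonzero symbols $\alpha_s\in\{1,2\}$ record the two connected components $\Delta^1,\Delta^2$ of each regular intersection, and the blocks of zeros of length $k_s+q$ record the number of local iterations. The admissibility conditions on $(k_s,k_{s+1})$ derived above are precisely the transition rules defining ${\cal B}^3_{0,5}$. The hyperbolicity and the two-component structure coming from Definition~\ref{def:regularinters} give the conjugacy to the Bernoulli subshift exactly as in~\cite{GS87,GS95}, with $\hbar(O)=\hat O$ and $\hbar(\Gamma_0)=\tilde\omega$.

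The main obstacle is the bookkeeping of the four parity cases: with $\lambda<0$ one must keep scrupulous track of which of $\lambda^i y^-$ and $c\lambda^j x^+$ is positive, since a sign error flips a regular intersection into an empty one and would produce the wrong admissibility rule. The delicate point is checking that the opposite-parity cases really yield the strict inequalities $k_{s+1}<k_s$ and $k_{s+1}>k_s$ uniformly, independently of $\tau$, so that ${\cal B}^3_{0,5}$ genuinely does not depend on the precise value of $\tau$ in $(-1,1)$ — this is the assertion that $H_3^5$ is ``not sensitive to the resonance $\tau=0$''. Verifying this insensitivity, rather than the coding construction itself (which is routine once the transition rules are pinned down), is where the real content lies.
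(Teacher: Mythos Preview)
Your overall strategy --- split into the four parity cases, plug into the criterion~(\ref{eq:regular}), take logarithms to convert to linear inequalities in $j-i$ shifted by $\tau$, then invoke the coding machinery of~\cite{GS87,GS95} --- is exactly the method the paper has in mind (its own proof is a one-liner pointing to~(\ref{eq:regular})). But there is a sign error that inverts your parity analysis.

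For $f_0\in H_3^5$ one has $\lambda=\gamma^{-1}<0$, $d>0$ and $c<0$, not $c>0$; see the proof line in the paper (``$\lambda=\gamma^{-1},\,c<0,\,d>0$'') and Proposition~\ref{prop:mutau12cl}, where $H_3^5$ is grouped with the first and second classes precisely because $c<0$. With $c<0$ the quantity in~(\ref{eq:regular}) reads $d(\lambda^i y^- + |c|\lambda^j x^+)$, so it is the \emph{same}-parity cases that have a definite sign independent of $\tau$: even--even is always regular (this is why $k_s$ even allows \emph{any} even $k_{s+1}$, not just $k_{s+1}\ge k_s$ as your inequality would give), and odd--odd is always empty (this is why $k_s$ odd forces $k_{s+1}$ even). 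The \emph{opposite}-parity cases are the $\tau$-dependent ones: there $j-i$ is odd, hence $|j-i|\ge 1$, and the hypothesis $|\tau|<1$ is exactly what makes the sign of $j-i\pm\tau$ agree with the sign of $j-i$, yielding the rules $k_{s+1}<k_s$ (for $k_s$ odd, $k_{s+1}$ even) and $k_{s+1}>k_s$ (for $k_s$ even, $k_{s+1}$ odd) with $\bar k(\tau)\to\infty$ as $|\tau|\to 1$. Your version, with $c>0$, would reproduce the $H_3^4$ picture instead and would not match the transition rules of ${\cal B}^3_{0,5}$. Once you correct the sign of $c$, the rest of your argument goes through and coincides with the paper's.
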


The proof follows immediately from the simple fact that inequality~(\ref{eq:regular}) automatically  holds for the pointed out integers
$j = k_s$ and $i=k_{s+1}$ when $\lambda = \gamma^{-1}, c<0,d>0$ and $|\tau|<1$.

We see that the value $\tau=0$ has a special meaning for the dynamics of  APMs of the third class,
{only maps in $H_3^5$ are not sensitive to the global resonance.} This feature will
effectively come to light {when we study} bifurcations of single-round periodic orbits.

\section{General unfoldings and bifurcations.} \label{sec:bifsingleround}

The main goal of the rest of the paper
is the study of bifurcations of {\it single-round periodic orbits} in one and two parameter {unfolding}
families $f_\varepsilon$ of APMs
(under condition \textbf{C})  {with} the
initial quadratic homoclinic tangency of the map $f_0$ satisfying conditions \textbf{A} and \textbf{B}. Recall that,
by  Definition~\ref{definition:p-round}, every such an orbit has only one
intersection point with $\Pi^+$ (or with $\Pi^-$). Thus, such a  point can be considered as a fixed
point of the corresponding {\it first return map} $T_k\equiv
T_1T_0^k:\sigma_{k}^{0}\mapsto\sigma_{k}^{0}$ with an appropriate integer $k\geq \bar k$. Note that
the integers $k$ can run {among} all values in the set $\{\bar k,\bar k+1,\dots\}$.

Concerning the parameter families we will consider either
one parameter families with the parameter $\varepsilon = \mu$ (general case) or two parameter
families with $\varepsilon = (\mu,\tau)$ (the global resonance case). The latter (two parameter)
family will be used only {to study the} bifurcation of APMs $f_0\in H_3^i$ with $i=1,2,3,4$, which are
{extremely} sensitive to the resonance $\tau=0$, see Section~\ref{sec:3cl}. Recall that $\mu$ is the
parameter of splitting manifolds $W^u(O)$ and $W^s(O)$ with respect to the homoclinic point $M^+$
(see maps~(\ref{eq:t1ext})), and $\tau$ is the invariant {quantity}
given in~(\ref{eq:tau}).

\subsection{The main rescaling lemma.}  \label{sec:mainresclemma}

In principle, one can  study bifurcations of the first return maps $T_k$ written in the initial
coordinates and with the initial parameters $\varepsilon$, using the corresponding formulae for the
local map $T_0$, its iterations $T_0^k$ and the  global map $T_1$  from
Section~\ref{sec:stofproblem}. However, there is a more effective way for studying homoclinic
bifurcations. Namely, we can bring maps $T_k$ to some unified form for all large $k$ using the
so-called rescaling method as it has been done in many papers, see e.g. the papers
\cite{BSh89,MR97,GS97,LS04,DGGTS13} where the rescaling method was applied to the conservative and
reversible cases.  After this, we can study (once) bifurcations in the unified map and
``project'' the obtained results onto the first return maps $T_k$ for various $k$.

The main technical result of this section
is the following

\begin{lm} {\sf [The main rescaling lemma]}\\
For every sufficiently large $k$ the first return map $T_k : \sigma_k^0 \rightarrow \sigma_k^0$ can
be brought, by a linear transformation of coordinates and parameters, to the following form
\begin{equation}
\begin{array}{l}
\bar{X} \; = \; Y  +
k\lambda^{2k}\varepsilon_k^1, \\
\displaystyle \bar{Y}  \;=\; M - \nu_1\; X - Y^2  + \nu_2\;\frac{f_{03}}{d^2}\lambda^{k} Y^3 +
k\lambda^{2k}\varepsilon_k^2 \;,
\end{array}
\label{henon0}
\end{equation}
where $\nu_1 =\; \mbox{{\rm sign}}\;\left( - bc \lambda^k\gamma^k\right)$, $\nu_2 =\; \mbox{{\rm
sign}}\;\left(\lambda^k\gamma^k\right)$;
the functions $\varepsilon_k^{1,2}(X,Y,M)$
are defined on a ball $\|(X,Y,M)\|\leq R$ with arbitrary large $R$ (when $k$ are big)
and are uniformly
bounded in $k$ along with all {their} derivatives up to order $(r-4)$.
Moreover, the following formulae take place for $M$ depending on the saddle $O$ {being} orientable or
not:
if $\lambda\gamma=+1$,
then
\begin{equation}
\begin{array}{l}
\displaystyle M = -d(1+\rho_k^1)\lambda^{-2k}\left(\mu + \lambda^k(cx^+ - y^-)(1 + k\beta_1
\lambda^k x^+y^-)\right) - s_0 + \rho_k^2\;,
\end{array}
\label{mui}
\end{equation}
if $\lambda\gamma=-1$,
then
\begin{equation}
\begin{array}{l}
\displaystyle M = -d(1+\rho_k^3)\lambda^{-2k}\left(\mu + c\lambda^k x^+ - \gamma^{-k}y^-)\right) -
s_0 + \rho_k^4\;,
\end{array}
\label{muinor}
\end{equation}
where
\begin{equation}
s_0   = dx^+(ac + f_{20}x^+) + \frac{1}{2}f_{11}x^+\left(1 + \nu_1 -
\frac{1}{2}f_{11}x^+\right)\; \label{s0}
\end{equation}
and
$\rho_k^{i} = O(k\lambda^k)$ are some small coefficients.
\label{henmain}
\end{lm}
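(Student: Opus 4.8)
The strategy is to explicitly compose the first return map $T_k = T_1 T_0^k$ using the normal-form representation of $T_0^k$ from Remark~\ref{rem:03bnf1} and the Taylor expansion of $T_1$ from~(\ref{eq:t1})--(\ref{eq:t1ext}), and then introduce rescaled coordinates $(X,Y)$ and a rescaled parameter $M$ that absorb all the $k$-dependent factors and bring the leading terms to the generalized H\'enon form~(\ref{henon0}). First I would write a point of $\sigma_k^0$ in the normal-form coordinates near $M^+=(x^+,0)$ and a point of $\sigma_k^1$ near $M^-=(0,y^-)$; using the cross-form~(\ref{eq:03bnf1}) (or its simpler version~(\ref{03bnf1-1}) when $\lambda\gamma=-1$), the iteration $T_0^k$ relates $(x_0,y_k)$-type variables with the $O(k^2\lambda^{3k})$ error control already supplied by the lemmas. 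Substituting these into~(\ref{eq:t1ext}) gives the first return map in the original coordinates with $k$-dependent coefficients.

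\textbf{Rescaling.} The key step is to choose the affine rescaling correctly. Writing the fixed-point-adapted variables as $x = x^+ + \cdots$ and expanding, the dominant term in $\bar y$ comes from $d(y-y^-)^2$, so to balance the quadratic term against the linear and constant ones one sets a scaling of the form $y - (\text{center}) = -d^{-1}\lambda^{-k}\cdot(\text{const})\, Y$ and similarly for $x$, with the sign factors $\nu_1 = \mbox{sign}(-bc\lambda^k\gamma^k)$ and $\nu_2 = \mbox{sign}(\lambda^k\gamma^k)$ emerging from tracking the signs of $bc$ and of $\lambda^k\gamma^k$ (which alternates with the parity of $k$ in the non-orientable case $\lambda\gamma=-1$). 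The coefficient $\nu_1$ multiplying $X$ records the orientability of $T_k$ via $J(T_k)=J(T_1)(\lambda\gamma)^k = -bc(\lambda\gamma)^k$, while the cubic coefficient $\nu_2 f_{03}\lambda^k/d^2$ comes from carrying the $f_{03}(y-y^-)^3$ term through the same rescaling; this is exactly the ``$a_2$''-type invariant surviving at the area-preserving order. The relations $|bc|\equiv 1$ and $R\equiv 0$ from~(\ref{bcR}) must be used to verify that the area-preserving structure is respected and that no unwanted linear $xy$ cross-term of lower order appears.

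\textbf{The parameter formula.} Deriving~(\ref{mui}) and~(\ref{muinor}) for $M$ is the most delicate bookkeeping. The rescaled parameter $M$ must collect the splitting parameter $\mu = G(0,0,\varepsilon)$ together with the geometric mismatch $cx^+ - y^-$ (orientable case) or $c\lambda^k x^+ - \gamma^{-k}y^-$ (non-orientable case), all multiplied by the blow-up factor $-d\lambda^{-2k}$, and then shifted by the constant $s_0$. I would obtain $s_0$ by expanding $G$ to second order at the fixed point, collecting the genuinely $k$-independent contributions $dx^+(ac+f_{20}x^+)$ from the quadratic terms and $\tfrac{1}{2}f_{11}x^+(1+\nu_1 - \tfrac{1}{2}f_{11}x^+)$ from completing the square in the $X$-variable after the shift that eliminates the linear term; the appearance of $(1+\nu_1)$ reflects that this contribution is present only when $T_k$ is orientable. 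The factors $(1+\rho_k^i)$ with $\rho_k^i = O(k\lambda^k)$ arise from the multiplicative corrections $R_n^{(k)}$ and $(1+k\beta_1\lambda^k x^+y^-)$ in the cross-form, which tend to $1$ but must be retained. In the non-orientable case one uses~(\ref{03bnf1-1}), where $\beta_1\equiv 0$, explaining why~(\ref{muinor}) lacks the $k\beta_1$ term.

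\textbf{Error estimates and the main obstacle.} Finally I would verify that the residual terms $\varepsilon_k^{1,2}(X,Y,M)$ are defined on a ball of radius $R\to\infty$ and are $C^{r-4}$-bounded uniformly in $k$. This follows by tracking how the smooth remainders $P_n^{(k)}, Q_n^{(k)}$ (bounded up to order $r-2n-1$ by Lemma~\ref{lemma:lm5}) and the higher-order terms of $T_1$ transform under a rescaling by powers of $\lambda^k$: each rescaling of a coordinate by $\lambda^{-k}$ costs smoothness but, because the error terms are of higher order in the original variables, the net powers of $\lambda^k$ remain positive and the prefactor $k\lambda^{2k}$ can be extracted. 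The main obstacle I anticipate is precisely this uniform-in-$k$ control of the rescaled remainders together with the simultaneous verification that the loss of derivatives (from $r$ down to $r-4$, consistent with $n=2$ in Lemma~\ref{lemma:lm5}) is exactly accounted for; keeping the orientable and non-orientable cases in parallel, with the correct parity-dependent signs $\nu_1,\nu_2$, is what makes the accounting intricate rather than merely routine.
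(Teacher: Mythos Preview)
Your proposal is correct and follows essentially the same route as the paper: compose $T_1T_0^k$ using the second-order normal-form cross-representation~(\ref{eq:03bnf1})/(\ref{03bnf1-1}) and the Taylor expansion~(\ref{eq:t1ext}), then apply a sequence of affine changes (shift to center near $M^+$, rescale $\xi,\eta$ by $\lambda^k/d$, further shifts and a linear change) to reach~(\ref{henon0}), using the area-preserving identity $R\equiv 0$ from~(\ref{bcR}) to kill the residual $XY$-term; the paper carries this out as five explicit successive changes (equations~(\ref{sys11})--(\ref{eq:fshift}) in the orientable case, then~(\ref{sys11-1})--(\ref{0018-1}) for odd $k$ when $\lambda\gamma=-1$), but your outline captures all the essential mechanisms, including the origin of $\nu_1,\nu_2$, the derivation of $s_0$, and the $C^{r-4}$ smoothness loss from the $n=2$ normal form.
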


\begin{proof}
We will use the representation of the map $T_0$ in the ``second normal form", i.e., as in~(\ref{eq:nfn2}).\footnote{Of course we
lose a little in the smoothness, since the second order
normal form is $C^{r-2}$ only, see Lemma~\ref{lemma:FSNF}. However, we {gain}
important information on form of the first return maps. On the other hand, our considerations cover
also the $C^\infty$ and real analytical cases.}  Then the map $T_0^k:\;\sigma_k^0 \rightarrow
\sigma_k^1$, for all sufficiently large $k$, can be written in form~(\ref{eq:03bnf1}).

First we consider the case where the saddle $O$ is {\em orientable}, $\gamma^{-1} = \lambda$.
Then, using~(\ref{eq:t1}),~(\ref{eq:t1ext}) and~(\ref{eq:03bnf1}),
we can write the first return  map $T_k : \sigma_k^0 \rightarrow \sigma_k^0$
in the following form
\begin{equation}
\begin{array}{l}
\bar{x}-x^+  =  a\lambda^k x + b(y-y^-) + e_{02}(y-y^-)^2 + \\
\qquad\qquad +O(k|\lambda|^{2k}|x| + |y-y^-|^3 +
|\lambda|^k|x||y-y^-|), \;\; \\ \\
\lambda^{k}\bar{y}\left(1+k\lambda^k\beta_1 \bar{x}\bar{y}\right)
+
k\lambda^{3k} O(|\bar{x}| + |\bar{y}|)\;= \; \\
\qquad \;=\; \mu + c\lambda^k x\left(1+k\lambda^k\beta_1
xy\right) + d(y-y^-)^2 +
\lambda^{2k}f_{20}\;x^2 + \\
\qquad\qquad + \lambda^{k}f_{11}(1+k\lambda^k\beta_1 xy)\;x(y-y^-) +
\lambda^{k}f_{12}\; x (y-y^-)^2 +
f_{03}(y-y^-)^3 + \\
\qquad\qquad + O\left((y-y^-)^4 + \lambda^{2k}|x||y-y^-| +
k|\lambda|^{3k}|x| + k\lambda^{2k}|x||y-y^-|^2\right),
\end{array}
\label{16}
\end{equation}
where we use the Shilnikov cross-coordinates  $x=x_0, y=y_k$, which are very convenient for the construction of return maps near saddles
(see e.g.~\cite{GaS73}).

Below, we will denote by $\alpha_{ki} = O(k\lambda^k)$, $i=0,1,2,...,$ some asymptotically small
coefficients. Now we shift the coordinates
$$
\eta = y - y^-,\; \xi = x - x^+ -  \lambda^k x^+(a+\alpha_k^0),
$$
in order to {vanish} the constant term (independent of coordinates) in the first equation of
(\ref{16}). Thus,~(\ref{16}) is recast as follows
\begin{equation}
\begin{array}{l}
\bar{\xi}  = a\lambda^k\xi + b(1+ {\alpha_{k1}})\eta + e_{02}\eta^2 +
O\left(k\lambda^{2k}|\xi| + |\eta|^3 +
|\lambda|^k|\xi||\eta|\right), \\ \\
\lambda^{k}\bar{\eta}(1 + \alpha_{k2})
+ k\lambda^{2k} O(|\bar{\xi}| + \bar{\eta}^2) +
k\lambda^{3k} O(|\bar{\eta}|)
\;=\; M_1 + c\lambda^k\xi(1  + \alpha_{k3}) + \\
\qquad\qquad + \eta^2(d + \lambda^k f_{12}x^+) + \lambda^{k}\eta(f_{11}x^+ +
\alpha_{k4}) + \lambda^k f_{11}\xi\eta +
f_{03}\eta^3  + \\
\qquad\qquad + O\left(\eta^4 + k|\lambda|^{3k}|\xi| +
k\lambda^{2k}(\xi^2 + \eta^2) + \lambda^{k}|\xi|\eta^2\right),
\end{array}
\label{sys11}
\end{equation}
where
\begin{equation}
M_1 = \mu + \lambda^k(cx^+ - y^-)(1 + k\lambda^k\beta_1 x^+y^-) + \lambda^{2k}x^+(ac + f_{20}x^+) +
O(k\lambda^{3k}).
\label{m1or}
\end{equation}

Now we rescale the variables:
\begin{equation}
\displaystyle \;\xi = -\frac{b(1+\alpha_{k1})(1+\alpha_{k2})}{d +
\lambda^kf_{12}x^+}\lambda^{k} u \;,\; \eta =
-\frac{1+\alpha_{k2}}{d + \lambda^kf_{12}x^+}\lambda^{k} v .
\label{reskk}
\end{equation}
System~(\ref{sys11}) in the coordinates $(u,v)$ is rewritten in the following form
\begin{equation}
\begin{array}{l}
\displaystyle \bar{u}  = v + a\lambda^k u -
\frac{e_{02}}{bd}\lambda^{k} v^2 +
O(k\lambda^{2k}), \\ 
\bar{v} \; = \;
M_2 - \nu_1 u(1 + \alpha_{k5}) - v^2 + \\
\displaystyle \qquad + v(f_{11}x^+ + \alpha_{k6})  -
\frac{f_{11}b}{d}\lambda^{k} uv + \frac{f_{03}}{d^2}\lambda^{k}
v^3 + O(k\lambda^{2k}) \;,
\end{array}
\label{0017}
\end{equation}
where $\nu_1 = -bc$ since the saddle $O$ is orientable for the case under consideration, i.e.,
$\nu_1=1$, if $T_1$ (and also $T_k$) is orientable map, and {$\nu_1 = -1$} if $T_1$ is non-orientable
(the globally non-orientable case), and
$$
\displaystyle M_2 = -\frac{d +
\lambda^kf_{12}x^+}{1+\alpha_{k2}}\lambda^{-2k} M_1.
$$
By the following shift of the coordinates
\begin{equation}
u_{new} = u - \frac{1}{2}(f_{11}x^+ + \alpha_{k6}),\; v_{new} = v
- \frac{1}{2}(f_{11}x^+ + \alpha_{k7})
\label{00170-z}
\end{equation}
with $\alpha_{k6},\alpha_{k7} = O(\lambda^k)$, we  bring the map~(\ref{0017}) {into} the following form
\begin{equation}
\begin{array}{l}
\displaystyle \bar{u}  = v + a\lambda^k u -
\frac{e_{02}}{bd}\lambda^{k} v^2 +
O(k\lambda^{2k}), \\ 
\displaystyle \bar{v}  = M_3 - \nu_1\; u - v^2  - \frac{f_{11}b}{d}\lambda^{k} uv +
\frac{f_{03}}{d^2}\lambda^{k} v^3 + O(k\lambda^{2k}) \;,
\end{array}
\label{00170}
\end{equation}
where
$$
M_3 = M_2 - \frac{f_{11}x^+}{2}(1 + \nu_1) + \frac{(f_{11}x^+)^2}{4}.
$$

Now we make the following linear change of coordinates
\begin{equation}
x \;=\; u + \tilde\nu_k^1\; v \;\;,\;\; y \;=\; v + \tilde\nu_k^2\; u
\;, \label{110c}
\end{equation}
where
\begin{equation}
\displaystyle \tilde\nu_k^1 = \frac{e_{02}}{bd}\lambda^k, \; \tilde\nu_k^2 = a\lambda^k -\nu_1\;
\frac{e_{02}}{bd}\lambda^k. \label{110d}
\end{equation}
Then system~(\ref{00170}) is rewritten as
\begin{equation}
\begin{array}{l}
\bar{x}  =  y + M_3\tilde\nu_k^1 +
O(k\lambda^{2k}), \\ 
\displaystyle \bar{y}  = M_3 - \nu_1\; x - y^2  + a\lambda^k y - \tilde R\lambda^{k} xy +
\frac{f_{03}}{d^2}\lambda^{k} y^3 + O(k\lambda^{2k}) \;,
\end{array}
\label{0018}
\end{equation}
where $ \tilde R =  \left(2a - 2e_{02}\nu_1/bd - bf_{11}/d\right)$. Since $\nu_1 = - bc$, we obtain, by~(\ref{bcR}), that
$\displaystyle \tilde R =\frac{1}{d}(2ad - 2ce_{02} - bf_{11})  \equiv 0 $.
Thus, the map~(\ref{0018}) takes the following form
\begin{equation}
\begin{array}{l}
\bar{x}  =  y + M_3 \tilde\nu_k^1 +
O(k\lambda^{2k}), \\ 
\displaystyle \bar{y}  = M_3 - \nu_1\; x - y^2  + a\lambda^k y + \frac{f_{03}}{d^2}\lambda^{k} y^3
+ O(k\lambda^{2k}).
\end{array}
\label{0019}
\end{equation}

Finally, we make the last shift of the coordinates:
\begin{equation}
X = x - \frac{1}{2}a\lambda^k - \tilde\nu_k^1 M_3,\;\; Y = y -
\frac{1}{2}a\lambda^k,
\label{eq:fshift}
\end{equation}
in order to {cancel} the constant term  in the first equation   and the linear {term} in $y$
in the second equation of~(\ref{0019}). After this, we obtain the final form~(\ref{henon0}) of the map $T_k$
in the rescaled coordinates, where formula~(\ref{mui}) takes place for the parameter $M$.

The proof in the case of the {\em non-orientable saddle} $O$, i.e., when $\gamma^{-1} = - \lambda$,
is quite similar and it is completely the same when $k$ is even, taking into account that
$\beta_1=0$. If $k$ is odd, then $\gamma^{-k} = - \lambda^k$ and the corresponding formulae change. Therefore, we consider the case of odd $k$.
By~(\ref{03bnf1-1}),
 system~(\ref{sys11}) can be written as follows
\begin{equation}
\begin{array}{l}
\bar{\xi}  = a\lambda^k\xi + (b + \hat\alpha_{k1}) \eta + e_{02}\eta^2 +
O\left(\lambda^{2k}(|\xi|  +
|\eta|) + |\eta|^3)\right), \\ 
\bar{\eta}
\;=\;
- M_1\lambda^{-k} - (c + \hat\alpha_{k2}) \xi
-  (d + \lambda^k f_{12}x^+)\lambda^{-k} \eta^2 - (f_{11}x^+ +  \hat\alpha_{k3}) \eta  - f_{11}\xi\eta -
f_{03}\lambda^{-k}\eta^3  + \\ 
\qquad +  O\left(|\lambda|^{-k}\eta^4 + |\eta^3| + |\lambda|^{k}( \xi^2  +
\eta^2)\right),
\end{array}
\label{sys11-1}
\end{equation}
where (recall that $\beta_1=0$ now)
\begin{equation}
M_1 = \mu + c\lambda^kx^+ +  \lambda^{k}y^-  + \lambda^{2k}x^+(ac + f_{02}x^+) + O(\lambda^{3k})
\label{m1or-1}
\end{equation}
and $\hat\alpha_{ki} = O(\lambda^k)$, $i=1,2,...$, are some small coefficients.

After the rescaling
\begin{equation}
\displaystyle \;\xi = \frac{(b+\hat\alpha_{k1}) \lambda^{k}}{d + \lambda^kf_{12}x^+}\; u \;,\; \eta
= \frac{\lambda^{k}}{d + \lambda^kf_{12}x^+} \;v \label{reskk-1}
\end{equation}
system~(\ref{sys11-1}) is rewritten in the
following form
\begin{equation}
\begin{array}{l}
\displaystyle \bar{u}  = v + a\lambda^k u +
\frac{e_{02}}{bd}\lambda^{k} v^2 +
O(\lambda^{2k}), \\ 
\displaystyle \bar{v} \; = \;
M_2 + u(1+\hat\alpha_{k4}) - v^2
- (f_{11}x^+ + \hat\alpha_{k3}) v   -
\frac{f_{11}b}{d}\lambda^{k} uv - \frac{f_{03}}{d^2}\lambda^{k}
v^3 + O(\lambda^{2k}) \;,
\end{array}
\label{0017-1}
\end{equation}
where $\displaystyle M_2 = -(d + \lambda^kf_{12}x^+) \lambda^{-2k} M_1$.

Recall that in this (locally non-orientable) case, by condition \textbf{D}, the homoclinic points $M^+$ and $M^-$ are of the needed type, i.e.,
 the global
map $T_1$ is orientable: $bc = -1$. Then the first return map $T_k$ for
odd $k$ will be non-orientable, i.e., the Jacobian of the map~(\ref{0017-1}) is equal identically to~$-1$.

After the coordinate shift~(\ref{00170-z})
with appropriate  $\alpha_{k6},\alpha_{k7} = O(\lambda^k)$,
the  map~(\ref{0017-1}) is recast as
\begin{equation}
\begin{array}{l}
\displaystyle \bar{u}  = v + a\lambda^k u +
\frac{e_{02}}{bd}\lambda^{k} v^2 +
O(\lambda^{2k}), \\ 
\qquad \displaystyle \bar{v}  = M_3 + u - v^2  -
\frac{f_{11}b}{d}\lambda^{k} uv - \frac{f_{03}}{d^2}\lambda^{k}
v^3 + O(\lambda^{2k}) \;,
\end{array}
\label{00170-v}
\end{equation}
where
$
\displaystyle M_3 = M_2 + \frac{1}{4}(f_{11}x^+)^2 + O(\lambda^k).
$
After the linear change of coordinates~(\ref{110c})--(\ref{110d})
with $\nu_1 =-1$, the system
(\ref{00170-v}) takes the form
\begin{equation}
\begin{array}{l}
\bar{x}  =  y + M_3\tilde\nu_k^1 +
O(\lambda^{2k}),\;\; 
\displaystyle \bar{y}  = M_3 +  x - y^2  + a\lambda^k y  - \frac{f_{03}}{d^2}\lambda^{k} y^3 +
O(\lambda^{2k}) \;.
\end{array}
\label{0018-1}
\end{equation}
Note that in the second equation of~(\ref{0018-1}), the same as in ~(\ref{0019}), the term with $xy$ vanishes.
Note also that the sign before $y^3$ is  opposite to that in~(\ref{0019}).

Finally, by means of the coordinate shift~(\ref{eq:fshift}), we bring the map~(\ref{0018-1}) into the form~(\ref{henon0}).
This completes the proof.
\end{proof}

\subsection{On bifurcations of fixed points in the conservative H\'enon maps.}

The Rescaling Lemma~\ref{henmain} shows that the
limit rescaled form of the first return maps $T_k$ is the conservative
H\'enon map
which is orientable if $\nu_1=1$, and non-orientable if {$\nu_1=-1$} (recall that $\nu_1$ is the Jacobian of $T_k$, i.e.,
$\nu_1=1$ if $T_k$
is orientable and $\nu_1=-1$ if $T_k$ is non-orientable).
Bifurcations of fixed points in these conservative maps are well-known.

\begin{figure}[ht]
\begin{tabular}{l}
\hspace*{35mm}\psfig{file=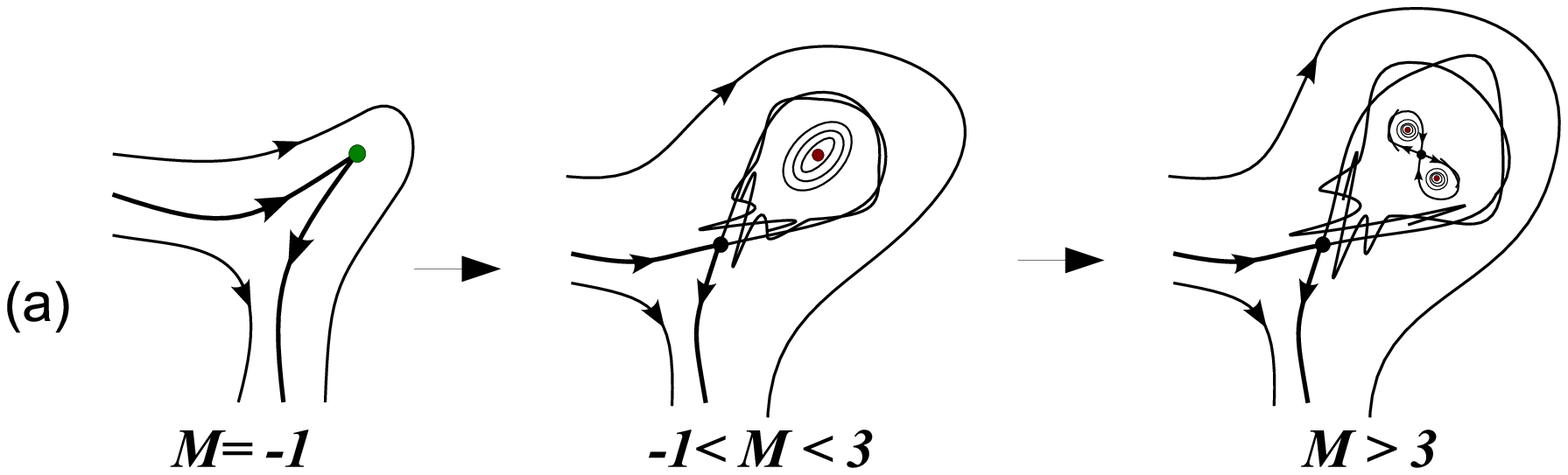, width=10cm}\\
\hspace*{8mm}\psfig{file=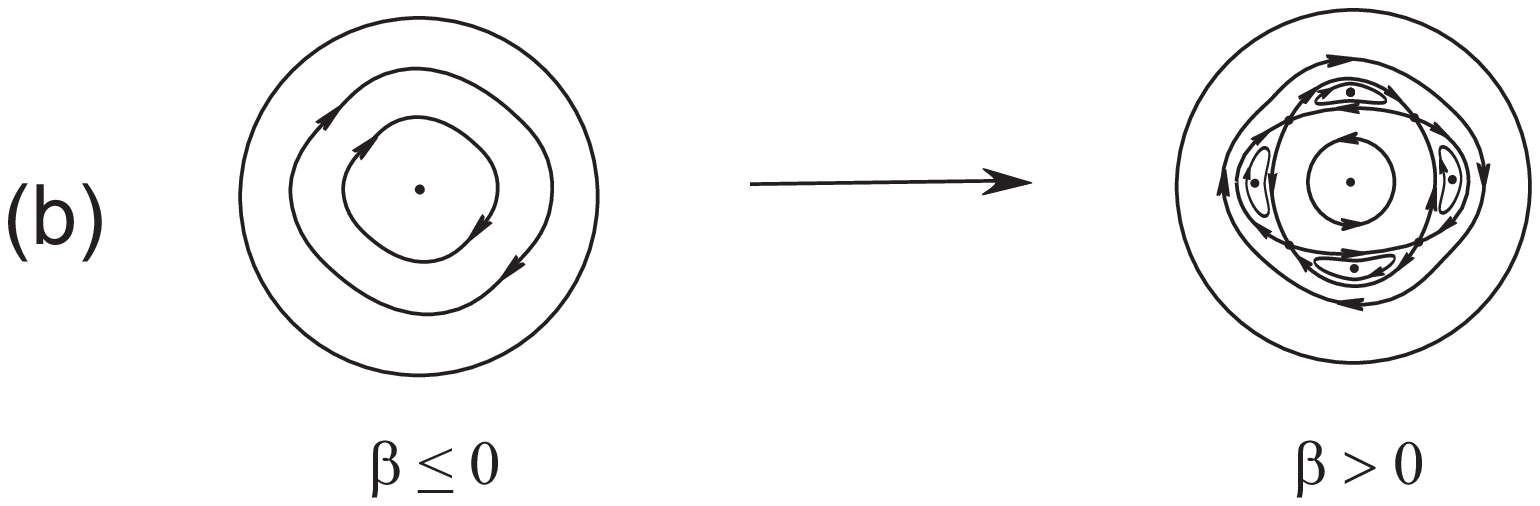,width=60mm,angle=0}
\hspace*{20mm}\psfig{file=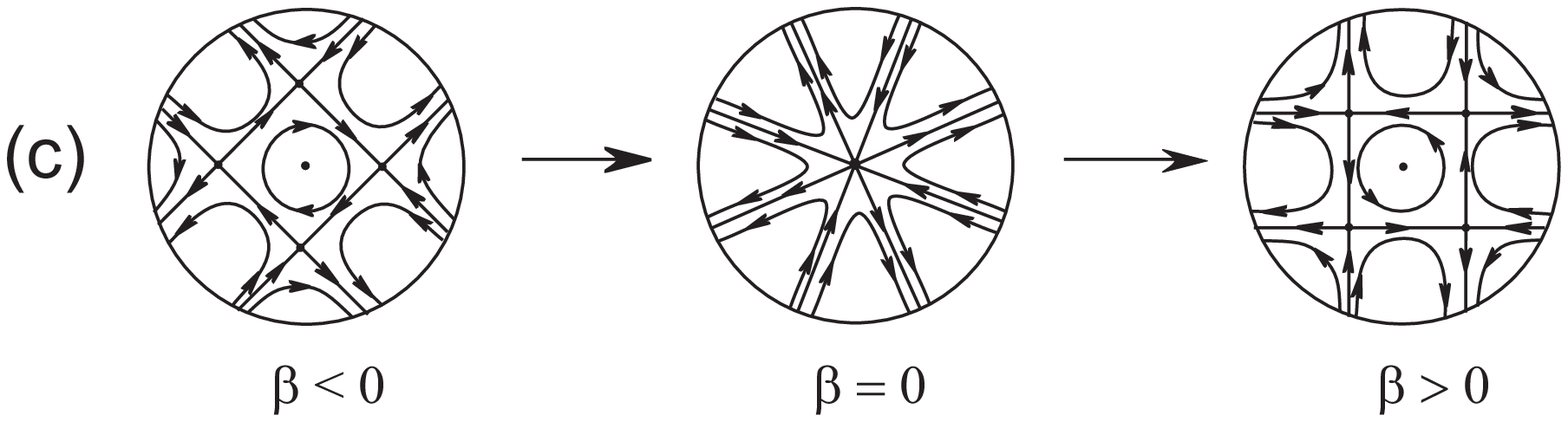,width=75mm,angle=0}
\end{tabular}
\caption{{\footnotesize Bifurcations of fixed points in the orientable conservative H\'enon map:
(a) the main scenario: $M<-1$, there are no fixed points; $M=-1$, a fixed parabolic point appears;
$-1<M<3$, two saddle and elliptic fixed points exist; $M=-3$, the period doubling bifurcation
with the elliptic fixed point; (b)--(c) bifurcations near resonance $1:4$ in the rescaled first return
map~(\ref{henon0}) for the cases (b) $\hat\nu_k= \nu_2 f_{03}d^{-2}\lambda^k>0$ (here the fixed point is always elliptic with multipliers
$e^{\pm i \psi}$)
and (c) $\hat\nu_k<0$ (at $\beta=0$ the fixed point is a saddle with eight separatrices) -- here
$\beta$ is a parameter characterizing a deviation of $\psi$ from $\pi/2$.
}} \label{fig:bifhen}
\end{figure}

\subsubsection{The orientable case.} \label{sec:421}

In the orientable case $\nu_1= 1$, the conservative H\'enon  map
\begin{equation}
\bar{x} \; = \; y, \;\; \bar{y}  \;=\; M - x - y^2 \;
\label{hen0or1}
\end{equation}
has an elliptic fixed point  {with multipliers $e^{\pm i\psi}$, $\psi = \arccos \left( 1 - \sqrt{1+M} \right)$,} for $M\in (-1,3)$.
This point
is generic (KAM-stable)
for all $M\in (-1,3)$ except for two values: $M=0$ for $\psi = \pi/2$ and $M = 5/4$ for $\psi = 2\pi/3$. The {fixed} point for $\psi = 2\pi/3$
is non-degenerate and {always} unstable: it is a saddle with six separatrices. {On the other hand}, the {fixed}
point for $\psi = \pi/2$ is degenerate:
the so-called case ``$A=1$'',
\cite{Arn-Geom,AAIS86}, takes place in the H\'enon map. However, the
 map $T_k$, in reality, has the rescaled form
(\ref{henon0}).
{Therefore}, if the coefficient $f_{03}$ is non-zero, the conservative resonance $1:4$ becomes
non-degenerate~\cite{B87,MGon05}. Namely, the corresponding fixed point will be KAM-stable (of elliptic type) if $f_{03}\lambda^k >0$
and unstable (a saddle with 8 separatrices) if $f_{03}\lambda^k <0$, see Figure~\ref{fig:bifhen}(b),(c).

The conservative H\'enon map has also  fixed parabolic points, for
$M=-1$ with double multiplier $+1$, and for $M=3$ with double multiplier $-1$. The
corresponding conservative bifurcations are non-degenerate. See Figure~\ref{fig:bifhen} for an illustration.

\subsubsection{The non-orientable case.} \label{sec:422}

In the non-orientable case $\nu_1=-1$, the conservative non-orientable H\'enon  map
\begin{equation}
\bar{x} \; = \; y, \;\; \bar{y}  \;=\; M + x - y^2 \;
\label{hen0nor1}
\end{equation}
does not have elliptic fixed
points. However, elliptic {2-periodic orbits} exist here for $M\in (0,1)$.

The non-orientable H\'enon map  
 {(\ref{hen0nor1})} has no fixed points
for $M<0$, it has one fixed point $\bar O(0,0)$ with multipliers $\nu_1=+1,\nu_2=-1$ for $M=0$ and
two saddle fixed points, {$\bar O_1(-\sqrt{M},-\sqrt{M})$ and $\bar O_2(\sqrt{M},\sqrt{M})$,} for
$M>0$. Besides, an elliptic {2-periodic} orbit exists for $0<M<1$, {consisting} of two points
$p_1(-\sqrt{M},\sqrt{M})$ and $p_2(\sqrt{M},-\sqrt{M})$ and has multipliers $e^{\pm i\psi}$, where $\psi = \arccos (1-2M)$.
The value $M=+1$ corresponds to the
period doubling bifurcation of this elliptic orbit. See Figure~\ref{fig:bifhen-1} for an illustration.

\begin{figure}[htb]
\centerline{\epsfig{file=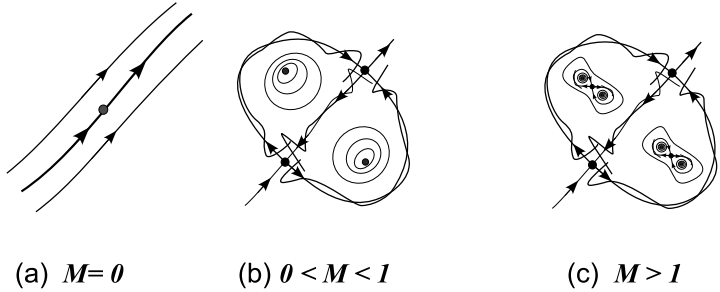,
height=5cm }} \caption{{\footnotesize The main bifurcation scenario in the non-orientable
conservative H\'enon map.}} \label{fig:bifhen-1}
\end{figure}

Note that the elliptic {2-periodic} orbit is generic for all $M\in (0,1)$, except for $M=1/2$ and $M= 3/4$ which correspond
to the strong resonances
$1:4$ and $1:3$,
respectively, and $M = {5}/{8}$ which corresponds to {the cancellation of} the first Birkhoff coefficient
at the cycle $\{p_1,p_2\}$, see~\cite{DGGTS13}.\footnote{However, we do not study here the question of KAM-stability of the corresponding
{2-periodic orbits} with $\psi = \pi/2, 2\pi/3, \arccos {-1/4}$. Note that in the case of the conservative (and orientable) H\'enon map the
corresponding problem was solved in~\cite{B87}, where it was established that the fixed elliptic point of the H\'enon map
at $M=9/16$, with $\psi = \arccos(-1/4)$, has zero first Birkhoff coefficients and nonzero second one, i.e., it is KAM-stable.  }

\section{One parameter cascades of elliptic periodic orbits.}\label{sec:fmu}

In this section we consider the problem on {existence of} cascades of elliptic {periodic orbits} in the case of APMs with
quadratic homoclinic tangencies. {We} establish not only their existence but {we also} analyze questions on the
coexistence of elliptic periodic orbits of different periods. {This} will  allow us to construct the  main
elements of the bifurcation diagrams near homoclinic tangencies for APMs.

Let $f_0$ be an APM with a quadratic homoclinic tangency for which conditions \textbf{A}
and \textbf{B} hold. We embed $f_0$ in a one parameter family $f_\mu$ of APMs which unfolds
generally, under condition~\textbf{C}, the initial quadratic homoclinic tangency.

{\bf 1.} {We first} consider {\em the symplectic case}.

By the Rescaling Lemma~\ref{henmain}, the conservative (orientable)
H\'enon map~(\ref{hen0or1})
is the limit rescaled form for the first return maps $T_k$ with sufficiently large $k$.
In this case,
we can rewrite the relation~(\ref{mui}) as follows
\begin{equation}
\begin{array}{l}
\displaystyle \mu  = - \lambda^k y^- \alpha (1 + k\beta_1 \lambda^k x^+y^-) - \frac{1}{d}(M + s_0)\lambda^{2k} +
O(k\lambda^{3k}),
\end{array}
\label{mui1}
\end{equation}
where
\begin{equation}
s_0   = dx^+(ac + f_{20}x^+) + f_{11}x^+\left(1 - \frac{1}{4}f_{11}x^+\right)
\label{s0symp}
\end{equation}
(that gives
formula~(\ref{s0}) with $\nu_1=1$) and
\begin{equation}
\begin{array}{l}
\displaystyle \alpha = \frac{cx^+}{y^-} - 1.
\end{array}
\label{alph1}
\end{equation}

Since the parabolic fixed points exist in the map~(\ref{hen0or1}) for $M=-1$ and $M=3$, we obtain that
the first return map $T_k$, for sufficiently large $k$, has a fixed point with double multiplier $+1$ and with double multiplier $-1$,
respectively, for such values of $\mu$:
\begin{equation}
\begin{array}{l}
\displaystyle \mu  = \mu_k^+ \equiv - \lambda^k y^- \alpha(1 +
k\beta_1 \lambda^k x^+y^-) - \frac{1}{d}(s_0  -1)\lambda^{2k} +
O(k\lambda^{3k}), \\ \\
\displaystyle \mu  = \mu_k^- \equiv - \lambda^k y^- \alpha (1 + k\beta_1 \lambda^k x^+y^-) -
\frac{1}{d}(s_0 + 3)\lambda^{2k} + O(k\lambda^{3k}).
\end{array}
\label{muk+1n}
\end{equation}
Thus, the values $\mu=\mu_k^+$ and $\mu=\mu_k^-$  correspond to the border points of the interval ${\sf
e}_k$ such that the first return map $T_k$ has an  elliptic fixed point for $\mu\in {\sf
e}_k$.
These elliptic points are generic (KAM-stable) for all values of $\mu\in {\sf
e}_k$, except for {the two values}  $\mu  = \mu_k^{\pi/2}$ and $\mu  = \mu_k^{2\pi/3}$
corresponding, respectively, to $M=0$ and $M=5/4$ in~(\ref{mui1}).

{\bf 2.} {Second, we} consider  {\em the globally non-orientable case}.

In this case, $f_0$ satisfies conditions \textbf{A} and \textbf{B} with $\lambda\gamma = +1$ and also the global map $T_1$ is non-orientable,
i.e., $bc=+1$.
Then, by the Rescaling Lemma~\ref{henmain}, the conservative {\em non-orientable}
H\'enon map~(\ref{hen0nor1})
will be the limit rescaled form for the first return maps $T_k$ with sufficiently large $k$.
Note that formula~(\ref{mui}), describing the relation between the parameters $M$ and $\mu$, takes place here as before, however, since $bc=+1$,
the quantity $s_0$ will be
 {different from the one of} the symplectic case:
\begin{equation}
s_0 = s_0^{nor}  = dx^+(ac + f_{20}x^+) - \frac{1}{4}\left(f_{11}x^+\right)^2
\label{s0nsymp}
\end{equation}
(that gives formula~(\ref{s0}) with $\nu_1=-1$). Note that in this (non-orientable) case we use the notation $s_0^{nor}$ for $s_0$, only for
more definiteness.

Then in the globally non-orientable case, the relation~(\ref{mui}) is rewritten  as follows
\begin{equation}
\begin{array}{l}
\displaystyle \mu  = - \lambda^k y^- \alpha (1 + k\beta_1 \lambda^k x^+y^-) - \frac{1}{d}(M + s_0^{nor})\lambda^{2k} +
O(k\lambda^{3k}),
\end{array}
\label{mui1gl}
\end{equation}
Since the map~(\ref{hen0nor1}) has parabolic-like fixed points for $M=0$ and $M=1$, we obtain, by~(\ref{mui1gl}),   that
the first return map $T_k$, for sufficiently large $k$, has a fixed point with multipliers $+1$ and $-1$ and a {2-periodic orbit} with double
multiplier $-1$, respectively, for such values of $\mu$:
\begin{equation}
\begin{array}{l}
\displaystyle \mu  = \mu_k^{\pm1}  \equiv - \lambda^k y^- \alpha(1 +
k\beta_1 \lambda^k x^+y^-) - \frac{1}{d}s_0^{nor}\;\lambda^{2k} +
O(k\lambda^{3k}),\\ \\
\displaystyle \mu  = \mu_k^{2-} \equiv - \lambda^k y^- \alpha (1 + k\beta_1 \lambda^k x^+y^-) -
\frac{1}{d}(s_0^{nor} + 1)\lambda^{2k} + O(k\lambda^{3k}).
\end{array}
\label{muk+1-1n}
\end{equation}

Thus, the values $\mu=\mu_k^{\pm1}$ and $\mu=\mu_k^{2-}$ correspond to the border points of the interval ${\sf
e}_k^2$ such that the first return map $T_k$ has an  elliptic {2-periodic orbit} for $\mu\in{\sf
e}_k^2$.
These elliptic {2-periodic orbits} are generic (KAM-stable) for all values of $\mu\in {\sf
e}_k^2$, except for three {values} $\mu  = \mu_k^{2,\pi/2}$, $\mu  = \mu_k^{2,2\pi/3}$ and $\mu  = \mu_k^{2,\arccos{-1/4}}$
corresponding, respectively, to $M=1/2$,   $M=3/4$ and $M=5/8$ in~(\ref{mui1}).

{\bf 3.} {Finally, we} consider {\em the locally non-orientable case}.

In this case $f_0$ satisfies conditions \textbf{A} and \textbf{B} with $\lambda\gamma = -1$ and $bc=-1$ by condition~\textbf{D}.
In the case under consideration,
$f_0 \in H_3^2 \cup H_3^3$, see section~\ref{sec:3cl}. Then
the first return maps $T_k$ will be orientable for even $k$ and non-orientable for odd $k$.

First, we consider the case of {\em even} $k$, $k = 2m$.

Then, by the Rescaling Lemma~\ref{henmain}, the conservative
{orientable} H\'enon map~(\ref{hen0or1}) is the limit rescaled form for the first return maps $T_k$, where
$M$ satisfies~(\ref{muinor}) and,
the same as in the symplectic case, formula~(\ref{s0symp}) holds for $s_0$.
We can rewrite  relation~(\ref{muinor}) for even $k$ as
\begin{equation}
\begin{array}{l}
\displaystyle \mu  =  - \lambda^k y^- \alpha - \frac{1}{d}(s_0 +M)\lambda^{2k} +
O(k\lambda^{3k}),\;\; k= 2m
\end{array}
\label{mui1norev}
\end{equation}
and  obtain  that
the first return map $T_k$, for sufficiently large {\em even} $k$, has a fixed point with double multiplier $+1$ and with double multiplier $-1$,
respectively, for such values of $\mu$:
\begin{equation}
\begin{array}{l}
\displaystyle \mu  = \tilde\mu_k^{+}  \equiv - \lambda^k y^- \alpha - \frac{1}{d}(s_0 -1)\lambda^{2k} +
O(k\lambda^{3k}), \\ \\
\displaystyle \mu  = \tilde\mu_k^{-} \equiv - \lambda^k y^- \alpha -
\frac{1}{d}(s_0 + 3)\lambda^{2k} + O(k\lambda^{3k}),\;\; k= 2m.
\end{array}
\label{muk+1-1ln}
\end{equation}

Thus, the values $\mu=\tilde\mu_{2m}^{+}$ and $\mu=\tilde\mu_{2m}^{-}$
correspond to the border points of the interval $\tilde{{\sf e}}_{2m}$ such that the first return map $T_{2m}$ has an  elliptic fixed point
for $\mu\in\tilde{{\sf
e}}_{2m}$.
These elliptic fixed points are generic (KAM-stable) for all values of $\mu\in \tilde{{\sf
e}}_k$, except for {the two values} $\mu  = \tilde\mu_k^{\pi/2}$ and $\mu  = \tilde\mu_k^{2\pi/3}$
corresponding, respectively, to $M=0$ and    $M=5/4$ in~(\ref{mui1norev}).

In the case where  $k$ is {\em odd}, $k=2m+1$, by the Rescaling Lemma~\ref{henmain}, the conservative
non-orientable H\'enon map~(\ref{hen0nor1}) becomes the limit rescaled form for the first return maps $T_k$, where $M$ satisfies~(\ref{muinor}).
However, for odd $k$, as in the globally non-orientable case, formula~(\ref{s0nsymp}) holds for $s_0=s_0^{nor}$.
Thus, when $k$ is odd, since $\lambda^k = - \gamma^{-k}$ and $\lambda<0,\gamma>0$ for $f_0\in H_3^j,\;j=2,3,$ relation~(\ref{muinor}) can be
rewritten as
\begin{equation}
\begin{array}{l}
\displaystyle \mu  =  - \lambda^k y^-(\alpha+2) - \frac{1}{d}(s_0^{nor} +M)\lambda^{2k} +
O(k\lambda^{3k}),\;\; k= 2m+1
\end{array}
\label{mui1norodd}
\end{equation}
where $\displaystyle \alpha +2 = \frac{cx^+}{y^-} +1$, see~(\ref{alph1}). Then
we obtain  that
the first return map $T_k$, for sufficiently large {\em odd} $k$, has a fixed point with multipliers $+1$ and $-1$ and {an elliptic 2-periodic
orbit}
with double multiplier~$-1$, respectively, for such values of $\mu$:
\begin{equation}
\begin{array}{l}
\displaystyle \mu  = \tilde\mu_k^{\pm 1}  \equiv - \lambda^k y^- (\alpha+2)  - \frac{1}{d}s_0^{nor}\;\lambda^{2k} +
O(k\lambda^{3k}),\\ \\
\displaystyle \mu  = \tilde\mu_k^{2-}  \equiv - \lambda^k y^- (\alpha+2)  - \frac{1}{d}(s_0^{nor}+1)\lambda^{2k} + O(k\lambda^{3k}),\;\; k= 2m+1.
\end{array}
\label{muk+1-1lln}
\end{equation}

Thus, the values $\mu=\tilde\mu_{2m+1}^{\pm 1}$ and $\mu=\tilde\mu_{2m+1}^{2-}$
correspond to the border points of the interval $\tilde{{\sf
e}}^2_{2m+1}$ such that the first return map $T_{2m+1}$ has an elliptic {2-periodic orbit} for $\mu\in\tilde{{\sf
e}}^2_{2m+1}$.
These elliptic {2-periodic orbits} are generic (KAM-stable) for all values of $\mu\in \tilde{\sf
e}_k^2$, except for {the three values} $\mu  = \tilde\mu_k^{2,\pi/2}$, $\mu  = \tilde\mu_k^{2,2\pi/3}$ and $\mu  = \tilde\mu_k^{2,\arccos{-1/4}}$
corresponding, respectively, to $M=1/2$,   $M=3/4$ and $M=5/8$ in~(\ref{mui1norodd}).

Now we collect the results obtained in this section in the following theorem.

\begin{theorem} {\bf [On a one parameter cascade of elliptic points]} \\
Let $f_\mu$ be a one parameter family of APMs {satisfying conditions \textbf{A}, \textbf{B} and \textbf{C}}.  Then in any segment
$[-\mu_0,\mu_0]$
of $\mu$, there exist infinitely many intervals (i) ${\sf e}_k$, $k=\bar k,\bar k+1,\dots$, in the symplectic case; (ii) ${\sf e}^2_k$,
$k=\bar k,\bar k+1,\dots$, in the globally non-orientable case;
(iii) $\tilde{{\sf e}}_{2m}$ and $\tilde{{\sf e}}^2_{2m+1}$, $m=\bar m,\bar m+1,\dots$, in the locally non-orientable case
\footnote{  with border points given by formulae (i)~(\ref{muk+1n}); (ii)~(\ref{muk+1-1n}); (iii)~(\ref{muk+1-1ln}) with $k=2m$ and, respectively,
(\ref{muk+1-1lln})  with $k=2m+1$} such that

{\rm 1.a)} $f_\mu$ has a {\rm single-round elliptic periodic orbit} (of period $k+q$) either at (i) $\mu \in {\sf e}_k$  or  at
(iii) $\mu \in \tilde{{\sf e}}_{2m}$, where $k=2m$;

{\rm 1.b)} $f_\mu$ has a {\rm double-round elliptic periodic orbit} (of period $2(k+q)$ corresponding to
a {$2$-periodic point} of $T_k$) either at (ii) $\mu \in {\sf e}^2_k$  or  at (iii) $\mu \in \tilde{{\sf e}}^2_{2m+1}$, where $k=2m+1$.

{\rm 2)} These elliptic orbits are generic (KAM-stable) at almost all values of the parameter $\mu$ in the pointed out intervals (except for
those values which correspond to the points with multipliers $e^{\pm i\psi}$, where $\psi = \pi/2, 2\pi/3$ in all cases and $\psi = \arccos(-1/4)$
in the cases (ii) as well as (iii) with $k=2m+1$).

{\rm 3)} If $\alpha\neq 0$  {($\alpha$ is given in~(\ref{alph1}))}, the intervals ${\sf e}_k$ and ${\sf e}^2_k$ as well as
$\tilde{{\sf e}}_{k}$
with $k=2m$ do not intersect for
different large $k$; if $\alpha\neq -2$, the intervals $\tilde{{\sf e}}^2_{2m+1}$ do not intersect for different large $m$.

\label{th:1parcasc}
\end{theorem}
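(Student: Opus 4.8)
The plan is to assemble the statement by collecting the explicit interval constructions carried out in parts \textbf{1}--\textbf{3} above, using the Rescaling Lemma~\ref{henmain} as the bridge between the first return maps $T_k$ and the model conservative H\'enon maps whose bifurcations are recorded in Sections~\ref{sec:421} and~\ref{sec:422}. Concretely, for each sufficiently large $k$ the Rescaling Lemma brings $T_k$ to the form~(\ref{henon0}), which is an $O(\lambda^k)$-perturbation of the orientable H\'enon map~(\ref{hen0or1}) when $\nu_1=+1$ (the symplectic case, and the locally non-orientable case with $k=2m$ even) and of the non-orientable H\'enon map~(\ref{hen0nor1}) when $\nu_1=-1$ (the globally non-orientable case, and the locally non-orientable case with $k=2m+1$ odd). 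Everything then reduces to transporting known facts about these two models, together with the $\mu$--$M$ dictionaries~(\ref{mui1}), (\ref{mui1gl}), (\ref{mui1norev}) and~(\ref{mui1norodd}), back to $T_k$.

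First I would establish the existence statements 1.a) and 1.b). From Section~\ref{sec:421} the orientable H\'enon map has an elliptic fixed point precisely for $M\in(-1,3)$, while from Section~\ref{sec:422} the non-orientable H\'enon map has an elliptic $2$-periodic orbit precisely for $M\in(0,1)$. Since~(\ref{henon0}) differs from these models only by area-preserving terms of order $\lambda^k$, the elliptic fixed point (resp. the elliptic $2$-periodic orbit, viewed as a fixed point of the orientable map $T_k^2$) persists for all large $k$ by the implicit function theorem, and its reciprocal pair of multipliers, being constrained by $J=\pm1$ and unable to leave the unit circle without first reaching $\pm 1$, stays on the circle. This yields single-round elliptic orbits of period $k+q$ in the orientable cases and double-round elliptic orbits of period $2(k+q)$ in the non-orientable cases. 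Translating the endpoints $M=-1,3$ (resp. $M=0,1$) through the relations above reproduces exactly the interval endpoints~(\ref{muk+1n}), (\ref{muk+1-1n}), (\ref{muk+1-1ln}) and~(\ref{muk+1-1lln}); since these endpoints tend to $0$ as $k\to\infty$, all the intervals ${\sf e}_k$, ${\sf e}^2_k$, $\tilde{\sf e}_{2m}$, $\tilde{\sf e}^2_{2m+1}$ eventually lie in $[-\mu_0,\mu_0]$, giving the required infinitude.

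For part 3) I would compare the two natural scales visible in the endpoint formulae. Each interval is centred at its leading term, namely $-\lambda^k y^-\alpha$ in~(\ref{mui1}), (\ref{mui1gl}), (\ref{mui1norev}) and $-\lambda^k y^-(\alpha+2)$ in~(\ref{mui1norodd}), and has width of order $\lambda^{2k}$, coming from the difference of the $O(\lambda^{2k})$ terms at the two endpoints. If $\alpha\neq 0$, the centres of two intervals with distinct large indices (of the relevant parity) differ by $|\alpha|\,y^-\,|\lambda|^{k}\bigl(1-|\lambda|^{p}\bigr)(1+o(1))$ with $p=1$ or $2$, which is of order $|\lambda|^{k}$ and therefore dominates the $O(\lambda^{2k})$ widths for all large $k$; hence these intervals are pairwise disjoint, establishing the claim for ${\sf e}_k$, ${\sf e}^2_k$ and $\tilde{\sf e}_{2m}$. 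The identical estimate with $\alpha$ replaced by $\alpha+2$ gives disjointness of the $\tilde{\sf e}^2_{2m+1}$ precisely when $\alpha\neq -2$.

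The delicate point, and the real content of part 2), is the genericity (KAM-stability) of these elliptic orbits for almost every $\mu$ in each interval; this is the main obstacle, since it cannot be read off from the model map alone. KAM-stability fails exactly at the strong resonances and where the twist (first Birkhoff) coefficient vanishes, so the full rescaled form~(\ref{henon0}), with its $\nu_2 f_{03}d^{-2}\lambda^k y^3$ term, is essential rather than the bare H\'enon limit: as recalled in Sections~\ref{sec:421}--\ref{sec:422}, this term renders the resonance $1:4$ ($\psi=\pi/2$) non-degenerate, so that point is KAM-stable when $f_{03}\lambda^k>0$ and a non-degenerate saddle with eight separatrices otherwise, while $\psi=2\pi/3$ is always a non-degenerate saddle and $\psi=\arccos(-1/4)$ must additionally be excluded in the non-orientable cases. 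Away from this finite set of exceptional values, which corresponds precisely to the listed parameters $\mu=\mu_k^{\pi/2},\mu_k^{2\pi/3},\dots$, the twist coefficient of the Birkhoff normal form of~(\ref{henon0}) is nonzero for all large $k$, so Moser's invariant-curve theorem applies and yields KAM-stability; verifying this non-vanishing uniformly in $k$ and checking that only the enumerated resonances survive is the step requiring the most care.
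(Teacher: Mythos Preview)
Your proposal is correct and follows essentially the same route as the paper: the theorem is stated precisely as a summary of the computations carried out in parts \textbf{1}--\textbf{3} of Section~\ref{sec:fmu}, which in turn rest on the Rescaling Lemma~\ref{henmain} and the bifurcation analysis of the conservative H\'enon maps in Sections~\ref{sec:421}--\ref{sec:422}. Your scale comparison for part~3) (centres of order $|\lambda|^k$ versus widths of order $\lambda^{2k}$) is exactly the argument implicit in the endpoint formulae~(\ref{muk+1n})--(\ref{muk+1-1lln}), and your identification of the genericity step as the only place where the cubic term $\nu_2 f_{03}d^{-2}\lambda^k Y^3$ matters mirrors the paper's discussion.
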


For the symplectic case, Theorem~\ref{th:1parcasc}  was established in~\cite{GG09}. {Indeed}, also in~\cite{MR97} the existence of cascade of elliptic
periodic orbits was proved, however, the problem on their coexistence was not considered.

We notice that the cases of global resonances, {that is,} maps $f_0$ with $\alpha =0$ as well as with $\alpha=-2$ for  $f_0\in H_3^{2,2},H_3^{3,2} $, are
of special interest, since elliptic periodic orbits (even infinitely many of them) can coexist. The related {phenomena} will be considered in
next section.

\section{On bifurcations of single-round periodic orbits in two parameter general unfoldings.} \label{sec:2parfam}

In principle, in Section~\ref{sec:fmu} we have studied bifurcations of single-round periodic orbits for APMs with quadratic homoclinic tangencies.
Only
some small questions remain  {unanswered}, e.g. bifurcations of strong resonances in the non-orientable cases etc. However,
we have not yet constructed more or
less complete bifurcation diagrams which include not only the results related to bifurcations of
concrete (single-round) periodic orbits but also,
{what is more important}, the problem of coexistence of
(elliptic) periodic orbits and, correspondingly, the problem of the order of
bifurcations. When the intervals of existence of elliptic orbits of different periods do not intersect, we can assume that the bifurcation
problem has been solved completely (up to some small details). However, as we saw, in the case under consideration, these intervals (from
Theorem~\ref{th:1parcasc})
may intersect and, moreover, they intersect undoubtedly when values of the parameter $\alpha$ varies near zero
(or near $\alpha = -2$ when $f_0\in H_3^{j,2}, j=2,3$). Thus, the cases of the maps $f_0$ with $\alpha =0$ ({respectively,} $\alpha=-2$)
are special and it is
clear that {these cases require} to consider {at least} two parameter families including the parameters $\mu$ and $\alpha$
as the governing ones.

Note that symplectic two-dimensional maps with quadratic homoclinic tangencies with $\alpha =0$ were studied in the papers by Gonchenko and
Shilnikov~\cite{GS01,GS03}, where the  phenomenon of ``global resonance'' was discovered. This phenomenon consists in the fact that
{for the values}
$\mu=0,\alpha=0$ the map $f_0$ can have infinitely many single-round elliptic periodic orbits of {\em all successive periods } starting at some
number. This unusual dynamical property takes place when $-3 < s_0 <1$, i.e., this is a codimension 2 effect.
This phenomenon can be considered as very interesting from various points of view, since elliptic orbits play an important r\^{o}le in conservative
dynamics and applications (including Celestial Mechanics,~\cite{Arn63}, smooth billiards,~\cite{TR},  etc).

In the present paper we also observe this phenomenon but from a different point of view: we study  bifurcation diagrams (for single-round periodic
orbits) in two parameter families, say $f_{\mu,\alpha}$, and show that all the domains of  existence of single-round elliptic orbits can
contain the
point $\mu=0,\alpha=0$. Moreover, we show that the phenomenon of global resonance takes place also in the non-symplectic case.

Note that the invariants $\tau$, see~(\ref{eq:tau}) and Section~\ref{sec3}, and $\alpha$ are closely related:
$$
\tau = \frac{1}{\ln|\lambda|} \ln\left|\alpha +1\right| \Leftrightarrow
\left\{
\begin{array}{l}
\alpha = |\lambda|^\tau -1 \;\;\mbox{if},\;\; \alpha>-1 \\
\alpha = -1 - |\lambda|^\tau \;\;\mbox{if},\;\; \alpha< -1
\end{array} \right.
$$
and, thus, both values  $\alpha=0$ and $\alpha = -2$ {are equivalent} to $\tau=0$. However, $\alpha$ and $\tau$ appear in homoclinic dynamics in
different ways: $\tau$ is a natural parameter describing the structure of nonwandering orbits of $f_0$, i.e., {when the tangency takes place},
and $\alpha$ is a natural {parameter} when studying bifurcations within the family $f_\mu$. However, in principle, they are the same. Therefore, in this
section, we will study bifurcation by means of the families $f_{\mu,\alpha}$ which unfold generally the initial tangency at $\mu=0,\alpha=0$,
except for the case $f_0\in H_3^{j,2}, j=2,3$ where the initial tangency exists at $\mu=0,\alpha = -2$.
We do this more for the simplicity of the presentation, since, in fact, we have already got, in Section~\ref{sec:fmu}, the formulae (namely,
(\ref{muk+1n}),
(\ref{muk+1-1n}),~(\ref{muk+1-1ln}) and
(\ref{muk+1-1lln}))  for the main bifurcation curves on the plane of parameters $\mu$ and $\alpha$.
 {We introduce the following notations for the bifurcation curves and the corresponding domains of existence of elliptic
periodic orbits.}

\begin{df}
\begin{itemize}
\item
For the {\em symplectic case}, we denote the curves~(\ref{muk+1n}) by $B_k^+$ and $B_k^-$, i.e., for $(\mu,\alpha)\in B_k^+$ (respectively for
$(\mu,\alpha)\in B_k^-$), the map $f_{\mu,\alpha}$ has a single-round periodic (of period $(k+q)$) orbit with double multiplier $+1$ (respectively
with double multiplier $-1$). Denote also by $E_k$ the domain between the curves $B_k^+$ and $B_k^-$, where the map $f_{\mu,\alpha}$ has a
single-round elliptic periodic orbit (of period $(k+q)$).
\item
For the {\em globally non-orientable case}, we denote the curves~(\ref{muk+1-1n}) by $B_k^{\pm 1}$ and $B_k^{2-}$, i.e., the map $f_{\mu,\alpha}$
has  a single-round periodic (of period $(k+q)$) orbit with multipliers $1$  and $-1$  {for $(\mu,\alpha)\in B_k^{\pm 1}$}
and  a double-round (of period $2(k+q)$) periodic orbit with double multiplier $-1$  {for $(\mu,\alpha)\in
B_k^{2-}$}. Denote also by $E_k^2$ the domain between the curves
$B_k^{\pm 1}$ and $B_k^{2-}$, where the map $f_{\mu,\alpha}$ has a double-round elliptic periodic orbit (corresponding to an elliptic {2-periodic
orbit} of the first return map $T_k$).
\item
For the {\em locally non-orientable case} we use the following notations. Denote the curves~(\ref{muk+1-1ln}) with even $k=2m$  by
$\tilde{B}_{2m}^{+}$ and $\tilde{B}_{2m}^{-}$, i.e., for $(\mu,\alpha)\in \tilde{B}_{2m}^+$ (respectively for $(\mu,\alpha)\in \tilde{B}_{2m}^-$),
the map $f_{\mu,\alpha}$ has a single-round periodic (of period $(2m+q)$) orbit with double multiplier $+1$ (respectively with double multiplier $-1$).
Denote also by $\tilde{E}_{2m}$ the domain between the curves $\tilde{B}_{2m}^+$ and $\tilde{B}_{2m}^-$, where the map $f_{\mu,\alpha}$ has a
single-round elliptic periodic orbit.
We denote the curves~(\ref{muk+1-1lln}) with $k=2m+1$ by $\tilde{B}_{2m+1}^{\pm 1}$ and $\tilde{B}_{2m+1}^{2-}$, i.e., the map $f_{\mu,\alpha}$
has  a single-round periodic (of period $(2m+1+q)$) orbit with multipliers~$+1$ and~$-1$ for $(\mu,\alpha)\in \tilde{B}_{2m+1}^{\pm 1}$ and
a double-round (of period $2(2m+1+q)$) periodic orbit with double multiplier~$-1$ for
$(\mu,\alpha)\in \tilde{B}_{2m+1}^{2-}$. Denote also by
$\tilde{E}_{2m+1}^2$ the domain between the curves $\tilde{B}_{2m+1}^{\pm 1}$ and $\tilde{B}_{2m+1}^{2-}$, where the map $f_{\mu,\alpha}$ has
a double-round elliptic periodic orbit (corresponding to an elliptic {2-periodic orbit} of the first return map $T_{2m+1}$).
\end{itemize}
\label{df:BkEk}
\end{df}

\begin{figure}[htb]
\centerline{\epsfig{file=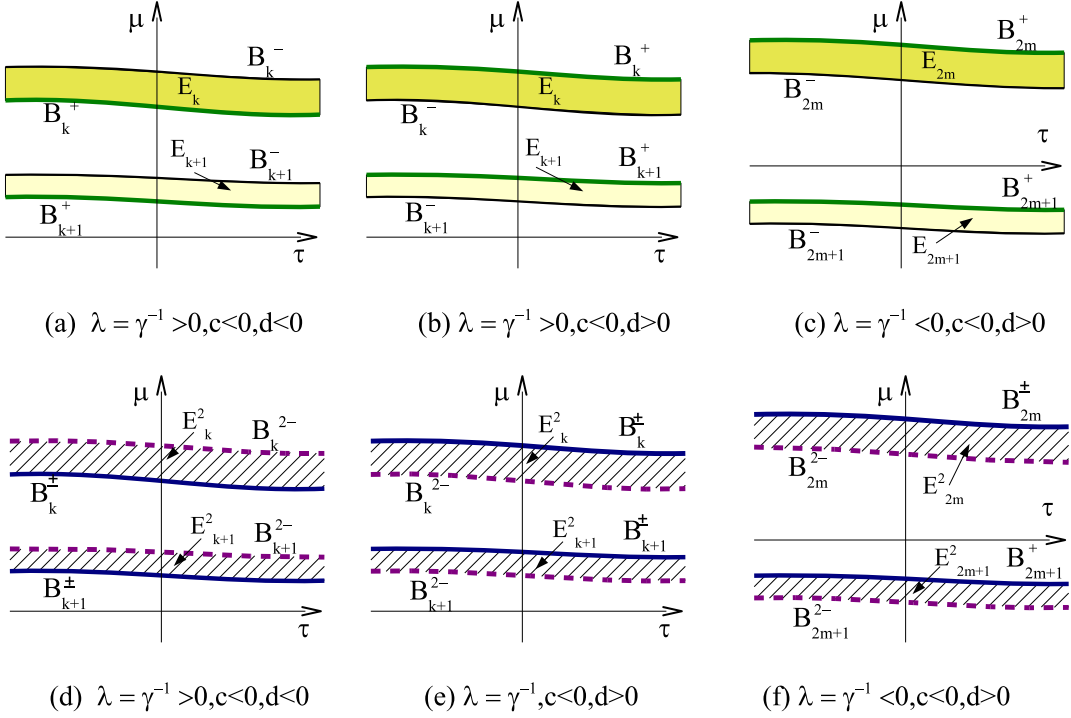, width=16cm
}} \caption{{\footnotesize Elements of the bifurcation diagrams for families $f_{\mu,\tau}$
with $c<0$: a)--c) the symplectic case; d)--f) the globally non-orientable case.} }
\label{fig:bifc<0}
\end{figure}

We note that the bifurcation diagrams in all cases with $c<0$, except for the locally non-orientable case $f_0\in H_3^{j,2}, j=2,3$,
are simple, since, by Theorem~\ref{th:1parcasc}, the intervals ${\sf e}_k$ (and ${\sf e}_k^2$) of existence of single-round (double-round)
elliptic periodic orbits do not intersect. If  we {now} consider the family $f_{\mu,\tau}$, then we obtain a picture {such} as {the one} in
Figure~\ref{fig:bifc<0}. {More precisely},
the following result is valid.

\begin{prop}
Let $f_0$ be a map of the first or second class or of the third class in $H_3^5$ and {assume} $\tau=0$ {in~(\ref{eq:tau})}.
Then, in any sufficiently small neighbourhood
of the origin of the plane $(\mu,\tau)$, the domains $E_k$ in the symplectic case or the domains $E_k^2$ in the globally non-orientable case do not
intersect and accumulate as $k\to\infty$ to the axis $\mu=0$, from one side for $\lambda=\gamma^{-1}>0$ and from both sides for
$\lambda=\gamma^{-1}<0$.
\label{prop:mutau12cl}
\end{prop}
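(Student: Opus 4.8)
The plan is to read the statement off directly from the explicit equations of the boundary curves obtained in Section~\ref{sec:fmu}, so that essentially no new analysis is required. First I would record that for all the classes in question (first, second, and $H_3^5$) one has $c<0$, whence by~(\ref{alph1}) the invariant $\alpha=cx^+/y^--1$ satisfies $\alpha<-1$. Using the relation $\alpha=-1-|\lambda|^{\tau}$ between $\alpha$ and $\tau$ (the branch valid for $\alpha<-1$), the resonance $\tau=0$ corresponds to $\alpha=-2$, and since $\tau\mapsto\alpha$ is a local diffeomorphism near $\tau=0$, working in a sufficiently small neighbourhood of the origin of the $(\mu,\tau)$--plane is equivalent to working near $(\mu,\alpha)=(0,-2)$; in particular $\alpha$ stays bounded away from the degenerate value $0$ throughout the neighbourhood. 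I would carry out the comparison in the coordinates $(\mu,\alpha)$ and transport the conclusions back to $(\mu,\tau)$ at the end.

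Next I would describe each domain as a thin strip and compare its width with the separation between strips. In the symplectic case, by~(\ref{muk+1n}) the curves $B_k^{\pm}$ bounding $E_k$ are, at fixed $\alpha$,
\[
\mu = -\lambda^k y^-\alpha\bigl(1+k\beta_1\lambda^k x^+y^-\bigr)-\tfrac{1}{d}(s_0+c^{\pm})\lambda^{2k}+O(k\lambda^{3k}),
\]
with $c^+=-1$, $c^-=3$, so that $E_k$ is, for each fixed $\alpha$, a $\mu$--interval of width $4|d|^{-1}|\lambda|^{2k}(1+o(1))$ centred at $\mu_k(\alpha)=-\lambda^k y^-\alpha(1+o(1))$. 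The globally non-orientable strips $E_k^2$ are treated identically from~(\ref{muk+1-1n}), with width $|d|^{-1}|\lambda|^{2k}(1+o(1))$. For $k\neq k'$ and fixed $\alpha$ one then has $\mu_k(\alpha)-\mu_{k'}(\alpha)=-y^-\alpha(\lambda^k-\lambda^{k'})+O(k|\lambda|^{2\min(k,k')})$, and because $\alpha$ is bounded away from $0$ while $|\lambda^k-\lambda^{k'}|\ge(1-|\lambda|)|\lambda|^{\min(k,k')}$ for $k\neq k'$, this separation is of order $|\lambda|^{\min(k,k')}$, which dominates the widths of order $|\lambda|^{2\min(k,k')}$. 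Hence for all sufficiently large $k$ the strips are pairwise disjoint at every $\alpha$ in the neighbourhood; equivalently, this is exactly part~3 of Theorem~\ref{th:1parcasc} applied at each fixed $\alpha\neq0$, with the estimates now taken uniformly over the neighbourhood.

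Finally I would read off the accumulation and its sidedness. Since at fixed $\alpha$ near $-2$ one has $\mu_k(\alpha)=-\lambda^k y^-\alpha(1+o(1))\to 0$ as $k\to\infty$, the strips accumulate onto the axis $\{\mu=0\}$; and as $y^->0$ and $\alpha<0$, the sign of $\mu_k(\alpha)$ equals the sign of $\lambda^k$. For $\lambda=\gamma^{-1}>0$ (first and second class) this sign is constant, giving one-sided accumulation, whereas for $\lambda=\gamma^{-1}<0$ (the case $H_3^5$) it alternates with the parity of $k$, giving accumulation from both sides. Transporting these facts through the diffeomorphism $\tau\mapsto\alpha$ yields the statement in the $(\mu,\tau)$--plane.

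The only point requiring care---and the one I would treat as the main (though minor) obstacle---is uniformity: one must guarantee that the $o(1)$ and $O(k\lambda^{3k})$ remainders in~(\ref{muk+1n})--(\ref{muk+1-1n}) are controlled uniformly in $\alpha$ over the whole neighbourhood, so that both the disjointness (which rests on $k|\lambda|^k\to0$ relative to the gap of order $|\lambda|^k$) and the sign determination survive for all large $k$ simultaneously. This is automatic here, since $\alpha$ remains in a fixed compact set bounded away from the degenerate value $0$, on which all the coefficients entering the boundary curves and their remainders are uniformly bounded.
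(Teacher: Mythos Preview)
Your proof is correct and follows essentially the same approach as the paper's: both arguments rest on the observation that, since $c<0$ for all the classes in question, the leading term in the centre of each strip is of order $|\lambda|^k$ with a coefficient bounded away from zero (you phrase this as $\alpha$ bounded away from $0$ near $\alpha=-2$, the paper as $|\lambda|^\tau+1>1$), while the strip widths are $O(|\lambda|^{2k})$, whence disjointness and one- or two-sided accumulation follow at once. The only cosmetic difference is that the paper substitutes $cx^+-y^-=-y^-(|\lambda|^\tau+1)$ and works directly in the $(\mu,\tau)$--plane, whereas you pass through the $(\mu,\alpha)$--plane and then transport back; you also spell out the separation estimate and the uniformity in $\alpha$ more explicitly than the paper does.
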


\begin{proof}
Since  $c<0$ in the cases under consideration, we have that $cx^+-y^- = -y^-(|c|x^+/y^- +1) =
-y^-(|\lambda|^\tau +1)$, by~(\ref{eq:tau}) and~(\ref{alph1}). Then we obtain from~(\ref{muk+1n})
the following equations for the  curves $B_k^+$ and $B_k^-$:
\begin{equation}
\begin{array}{l}
\displaystyle B_k^+:\;\;\mu = \lambda^ky^-(|\lambda|^\tau +1)(1 + k\beta_1 \lambda^kx^+y^-) +
\frac{1-s_0+\dots}{d}\lambda^{2k},\\
\displaystyle B_k^-:\;\;\mu = \lambda^ky^-(|\lambda|^\tau +1)(1 + k\beta_1 \lambda^kx^+y^-) -
\frac{3+s_0+\dots}{d}\lambda^{2k},
\end{array}
\label{eq:Bk+Bk-}
\end{equation}
which take place in the symplectic case.
Accordingly, we obtain from~(\ref{muk+1-1n}) the following equations for
the curves $B_k^-$ and $B_k^{2-}$:
\begin{equation}
\begin{array}{l}
\displaystyle B_k^-:\;\; \mu  =  \lambda^ky^-(|\lambda|^\tau +1)(1 + k\beta_1
\lambda^k x^+y^-) - \frac{1}{d}(s_0^{nor} + \hat\rho_k)\lambda^{2k},\\
\displaystyle B_k^{2-}:\;\; \mu  =  \lambda^ky^-(|\lambda|^\tau +1)(1 + k\beta_1 \lambda^k x^+y^-)
- \frac{1}{d}(s_0^{nor} + 1 + \hat\rho_k)\lambda^{2k},
\end{array}
\label{eq:Bk+-}
\end{equation}
which take place in the globally non-orientable case. The proposition follows immediately from these formulae, since $|\lambda|^\tau +1>1$ and
the ``strips'' $E_k$ and $E_k^2$ have thickness of order {$\lambda^{2k}$ }.
\end{proof}

In the remaining cases of APMs with quadratic homoclinic tangencies, the structure of the domains of existence of elliptic periodic orbits
near the origin $(\mu=0,\tau=0)$ will be absolutely  {different}, {since} the domains $E_k$,  $E_k^2$,  $\tilde E_k$ or  $\tilde E_k^2$
can intersect.
Moreover, {an infinity of}  such domains can contain the origin and, hence, infinitely many single-round or double-round elliptic periodic orbits
can coexist. Below we consider {such phenomena}.

First of all, we consider the symplectic case for which we establish the following result.
\begin{figure}[htb]
\centerline{\epsfig{file=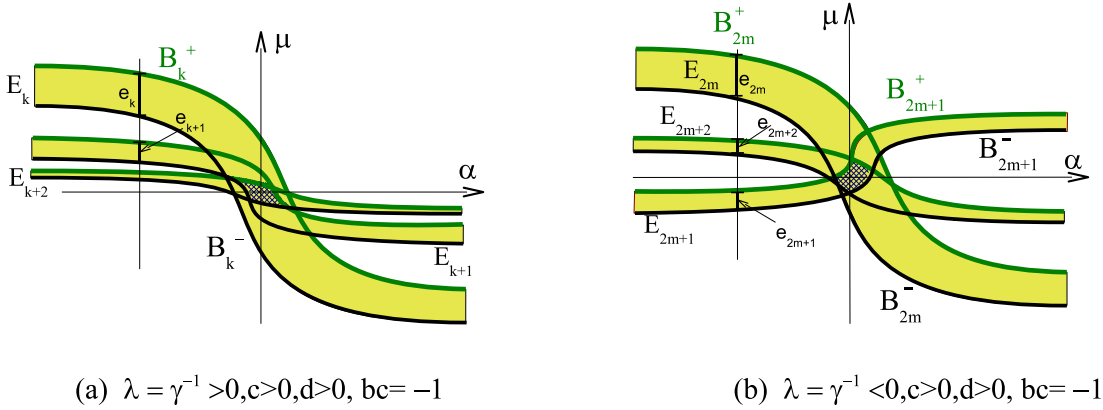, width=16cm
}} \caption{{\footnotesize Elements of the bifurcation diagrams for families $f_{\mu,\alpha}$ in the case
of symplectic maps $f_0$ of the third class in
(a) $H_3^1$ and (b) $H_3^4$. Here the  {case} of ``global
resonance'' at $\mu=0,\alpha=0$ is shown,
when all the domains $E_k$ contains the origin
$(\mu=0,\alpha=0)$. }}
\label{fig2marn}
\end{figure}

\begin{theorem}
Let $f_0 \in H_3^1\cup H_3^4$ be a symplectic map
and $f_{\mu,\alpha}$ be a two parameter general unfolding with the governing parameters $\mu$ and $\alpha$.
Then, in
any sufficiently small neighbourhood of the origin of the parameter plane $(\mu,\alpha)$, there are infinitely many
domains $E_k$, $k=\bar k, \bar k +1,...$, which accumulate to the axis $\mu=0$ as $k\to\infty$ and such that\\
{\rm 1)} all the domains $E_k$ are mutually crossed and intersect the axis $\mu=0$; \\
{\rm 2)} if $\;-3 < s_0 <1$, all the domains $E_k$ contains the origin $(\mu=0,\alpha=0)$ and, hence, the map $f_{0,0}$ has infinitely many
single-round elliptic periodic orbits of all successive periods $k=\bar k+q, \bar k +1+q,...$. Moreover, if $s_0 \neq 0, -5/4$ all these orbits
are generic.
\label{th:sympinf}
\end{theorem}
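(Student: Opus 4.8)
The plan is to read everything off the explicit expressions for the two border curves $B_k^\pm$ obtained in Section~\ref{sec:fmu} and to analyse their mutual position in the $(\mu,\alpha)$-plane. By the Rescaling Lemma~\ref{henmain}, for every sufficiently large $k$ the first return map $T_k$ becomes, in rescaled coordinates, the orientable conservative H\'enon map~(\ref{hen0or1}), which has an elliptic fixed point exactly for $M\in(-1,3)$; the parameter $M$ is tied to $(\mu,\alpha)$ by~(\ref{mui1}). Thus $E_k$ is the strip bounded by $B_k^+$ (where $M=-1$) and $B_k^-$ (where $M=3$), given in~(\ref{muk+1n}), with $\alpha$ as in~(\ref{alph1}) and $s_0$ as in~(\ref{s0symp}). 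The two curves share the same linear part $-\lambda^k y^-\alpha(1+k\beta_1\lambda^k x^+y^-)$ and differ only in their $\lambda^{2k}$-constant terms, so $E_k$ is a strip of slope $-\lambda^k y^-(1+o(1))$ and of width $\tfrac{4}{d}\lambda^{2k}(1+o(1))>0$ in the $\mu$-direction; here $d>0$ in both $H_3^1$ and $H_3^4$, and $\lambda^{2k}>0$ whatever the sign of $\lambda$, so the discussion is uniform over the two classes.

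For part~1 I would first intersect each boundary curve with the axis $\mu=0$: solving~(\ref{muk+1n}) for $\alpha$ gives $\alpha=O(\lambda^k)\to 0$, so $B_k^+$ and $B_k^-$ meet $\mu=0$ at two distinct nearby points and $E_k$ crosses the axis in a short $\alpha$-segment. For mutual crossing of $E_{k_1}$ and $E_{k_2}$ with $k_1<k_2$ I would equate the two centre-lines (the loci $M=1$); the leading-order computation yields a crossing at $\alpha^*=-\tfrac{1+s_0}{dy^-}\lambda^{k_1}(1+\lambda^{k_2-k_1})+\cdots=O(\lambda^{k_1})$, which lies inside any fixed neighbourhood of the origin once $\bar k$ is large, i.e.\ within the admissible range of Lemma~\ref{henmain}. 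Since at $\alpha=\alpha^*$ both centre-lines take a common value $\mu^*$ while each strip keeps positive width, the point $(\mu^*,\alpha^*)$ belongs to both $E_{k_1}$ and $E_{k_2}$, so the strips pairwise intersect. Both assertions in part~1 hold for every $s_0$.

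For part~2 I would evaluate the borders at $\alpha=0$. From~(\ref{muk+1n}), $\mu_k^+|_{\alpha=0}=-\tfrac{s_0-1}{d}\lambda^{2k}(1+O(k\lambda^k))$ and $\mu_k^-|_{\alpha=0}=-\tfrac{s_0+3}{d}\lambda^{2k}(1+O(k\lambda^k))$. When $-3<s_0<1$ and $d>0$, the leading terms force $\mu_k^+>0$ and $\mu_k^-<0$ for all large $k$, so $\mu=0$ lies strictly between the two borders; hence $(\mu,\alpha)=(0,0)\in E_k$ for every $k\ge\bar k$, and $f_{0,0}$ carries a single-round elliptic orbit of period $k+q$ for each such $k$ — infinitely many, of all successive periods $\bar k+q,\bar k+1+q,\dots$. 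For genericity I would return to~(\ref{mui1}): at the origin it gives $M=M_k=-s_0+O(k\lambda^k)\to-s_0$. As the H\'enon elliptic point is generic for all $M\in(-1,3)$ except $M=0$ and $M=5/4$ (Section~\ref{sec:421}), the hypotheses $s_0\neq0,-5/4$ keep $-s_0$ bounded away from those resonant values, so $M_k$ avoids them for large $k$ and every such orbit is KAM-stable.

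The algebra of~(\ref{muk+1n}) and~(\ref{mui1}) is routine; the one point needing genuine care is that the $O(k\lambda^{3k})$ remainders never overturn the strict sign inequalities nor push $\alpha^*$ outside the domain of validity, both of which follow by choosing $\bar k$ large so that the displayed leading terms dominate uniformly. The subtlest borderline is $s_0=0$ (the resonance $1{:}4$), where stability is decided by $\mathrm{sign}(f_{03}\lambda^k)$ and hence is $k$-dependent when $\lambda<0$ (the $H_3^4$ case); excluding $s_0=0$ is precisely what removes this ambiguity.
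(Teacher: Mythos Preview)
Your proof is correct and follows essentially the same route as the paper: both arguments read the positions of $B_k^\pm$ directly from~(\ref{muk+1n}), check their intersections with the axes $\mu=0$ and $\alpha=0$, and then invoke the Rescaling Lemma to see that at $(\mu,\alpha)=(0,0)$ one has $M=-s_0+o(1)$, so that Section~\ref{sec:421} gives genericity when $s_0\neq 0,-5/4$. Your explicit computation of the centre-line crossing for part~1 is a bit more detailed than the paper's ``it is easy to see'', but the underlying idea is identical.
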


\begin{proof}
In the symplectic case, when $f_0$ belongs to $H_3^1$ (where $\lambda = \gamma^{-1}>0, c>0, d>0$) or $H_3^4$ (where
$\lambda = \gamma^{-1}<0, c>0, d>0$), the bifurcation curves $B_k^+,B_k^-$ (the boundaries of the domain $E_k$) on the parameter plane $(\mu,\alpha)$
are given  by ~(\ref{muk+1n}). It is easy to see from these formulae that all the curves $B_k^+,B_k^-$  mutually intersect
and that they cross the axis $\alpha=0$ at 
$\mu~=~\mu_k^+~= -(d)^{-1}(s_0-1 + ...)\lambda^{2k}$ and
$\mu = \mu_k^- = -(d)^{-1}(s_0+3 + ...)\lambda^{2k}$,
and the axis $\mu=0$ at the points  $\alpha = \alpha_k^+ = (dy^-)^{-1}(s_0-1 + ...)\lambda^{k}$ and
$\alpha = \alpha_k^- = (dy^-)^{-1}(s_0+3 + ...)\lambda^{k}$. Then if $-3 < s_0 <1$, all the domains $E_k$ with sufficiently large $k$ contains
the origin $(\mu=0,\alpha=0)$.

Moreover, by the Rescaling Lemma~\ref{henmain}, all the first return maps $T_k$ {have ``the same expression'' for} $\mu=0, \alpha=0$. Indeed,
we obtain from~(\ref{mui}) that the rescaled form~(\ref{henon0}) of $T_k$ in this case looks as
\begin{equation}
\bar{X} \; = \; Y  + O(k\lambda^{2k}), \bar{Y}  \;=\; (- s_0 + \rho_k^2) - X + \frac{f_{03}}{d^2}\lambda^{k} Y^3 +
O(k\lambda^{2k}) \;,
\label{henon000n}
\end{equation}
where $\rho_k^2 = O(k\lambda^k)$ is a small coefficient (a correction to $s_0$).
Then,  see Section~\ref{sec:421}, if $-3<s_0<1$ every map~(\ref{henon000n}) with sufficiently large $k$ has an elliptic fixed point which is
generic if $s_0\neq 0, - 5/4$, i.e., if the strong resonances ($\psi =\pi/2, 2\pi/3$) are absent.
\end{proof}

In Figure~\ref{fig2marn}
we give an illustration of this theorem for different cases.

A similar result takes place in the globally non-orientable case.

\begin{theorem}
Let $f_0 \in H_3^1\cup H_3^4$  in the globally non-orientable case and
$f_{\mu,\alpha}$ be a two parameter general unfolding of $f_0$ with the governing parameters $\mu$ and $\alpha$. Then, in
any sufficiently small neighbourhood of the origin of the parameter plane $(\mu,\alpha)$, there are infinitely many
domains $E_k^2$, $k=\bar k, \bar k +1,...$, which accumulate to the axis $\mu=0$ as $k\to\infty$ and such that
{\rm 1)} all the domains $E_k^2$ are mutually crossed and intersect the axis $\mu=0$;\\
{\rm 2)} if $-1 < s_0^{nor} <0$, all the domains $E_k^2$ contain the origin $(\mu=0,\alpha=0)$ and, hence, the map $f_{0,0}$ has infinitely
many double-round elliptic periodic orbits  of all successive even periods $2(\bar k+q), 2(\bar k +q+1),...$. Moreover, if
$s_0 \neq -1/2, -3/4, - 5/8$, all these orbits are generic.
\label{th:gnorinf}
\end{theorem}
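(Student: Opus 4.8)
The plan is to follow the proof of Theorem~\ref{th:sympinf} almost verbatim, making two substitutions: the orientable Hénon analysis of Section~\ref{sec:421} is replaced by the non-orientable one of Section~\ref{sec:422}, and the elliptic object of $T_k$ is now a $2$-periodic orbit (giving a \emph{double-round} orbit of $f_{\mu,\alpha}$ of period $2(k+q)$) rather than a fixed point. First I would write down the boundary curves of $E_k^2$. By Definition~\ref{df:BkEk} these are $B_k^{\pm1}$ and $B_k^{2-}$, with equations given by~(\ref{muk+1-1n}) in the $(\mu,\alpha)$ plane. Both are graphs $\mu=\mu(\alpha)$ that are, to leading order, straight lines of the common slope $-\lambda^k y^-(1+k\beta_1\lambda^k x^+y^-)$, differing only in their constant terms $-\tfrac1d s_0^{nor}\lambda^{2k}$ and $-\tfrac1d(s_0^{nor}+1)\lambda^{2k}$. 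Hence $E_k^2$ is a thin strip of width $O(\lambda^{2k})$; since the slopes depend on $k$ through $\lambda^k$, any two curves with $k\neq k'$ are non-parallel lines and therefore cross, and, because the intercepts are $O(\lambda^{2k})\to0$, the strips accumulate to the axis $\mu=0$ and each meets it. Reading off the intersections with the axes exactly as in the symplectic case ($\alpha=0$ gives $\mu=-\tfrac1d s_0^{nor}\lambda^{2k}$ and $\mu=-\tfrac1d(s_0^{nor}+1)\lambda^{2k}$; $\mu=0$ gives $\alpha=O(\lambda^k)$) establishes part~1).

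For part~2) I would locate the origin through the rescaled parameter $M$. By~(\ref{mui1gl}) the point $\mu=0,\alpha=0$ corresponds to $M=-s_0^{nor}+O(k\lambda^k)$ in the limit map. By the Rescaling Lemma~\ref{henmain} (with $\nu_1=-bc=-1$ since $bc=+1$ here, and $\nu_2=+1$ since $\lambda\gamma=+1$), every first return map $T_k$ at $\mu=0,\alpha=0$ takes, for large $k$, the form of the non-orientable conservative Hénon map~(\ref{hen0nor1}):
\[
\begin{array}{l}
\bar X = Y + O(k\lambda^{2k}),\\
\bar Y = (-s_0^{nor}+\rho_k) + X - Y^2 + \frac{f_{03}}{d^2}\lambda^k Y^3 + O(k\lambda^{2k}),
\end{array}
\]
with $\rho_k=O(k\lambda^k)\to0$ and effective parameter $M=-s_0^{nor}+\rho_k$, where $s_0^{nor}$ is given by~(\ref{s0nsymp}) and $\alpha$ by~(\ref{alph1}). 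By Section~\ref{sec:422} this map possesses an elliptic $2$-periodic orbit precisely for $M\in(0,1)$. Therefore, whenever $-1<s_0^{nor}<0$, the value $M=-s_0^{nor}$ lies strictly inside $(0,1)$, and for all sufficiently large $k$ the correction $\rho_k$ keeps it there; thus the origin lies in every $E_k^2$, and $f_{0,0}$ carries a double-round elliptic periodic orbit of period $2(k+q)$ for each large $k$, i.e. orbits of all successive even periods starting at $2(\bar k+q)$.

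Genericity follows from the same section: the elliptic $2$-orbit of~(\ref{hen0nor1}) is KAM-stable for every $M\in(0,1)$ except $M=\tfrac12,\tfrac34,\tfrac58$ (the resonances $1{:}4$, $1{:}3$, and the vanishing of the first Birkhoff coefficient, respectively), and the $\frac{f_{03}}{d^2}\lambda^k Y^3$ term keeps these strong resonances non-degenerate for the actual map $T_k$. Translating through $M=-s_0^{nor}$, the exceptional values become $s_0^{nor}=-\tfrac12,-\tfrac34,-\tfrac58$, exactly the set excluded in the statement. I do not anticipate a genuine obstacle: the whole argument is parallel to Theorem~\ref{th:sympinf}, and the only point requiring care is the bookkeeping that the controlling quantity is $s_0^{nor}$ of~(\ref{s0nsymp}) (not the symplectic $s_0$) and that the relevant window is the elliptic-$2$-orbit interval $(0,1)$ of~(\ref{hen0nor1}) rather than the elliptic-fixed-point interval $(-1,3)$.
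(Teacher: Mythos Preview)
Your proposal is correct and follows essentially the same route as the paper's own proof: read off the boundary curves from~(\ref{muk+1-1n}), compute their intercepts with the axes $\alpha=0$ and $\mu=0$ to get part~1), then invoke the Rescaling Lemma at $\mu=0,\alpha=0$ to obtain the non-orientable H\'enon form~(\ref{henon000ng}) with $M=-s_0^{nor}+o(1)$ and conclude via Section~\ref{sec:422}. One small remark: your aside that the $\frac{f_{03}}{d^2}\lambda^k Y^3$ term ``keeps these strong resonances non-degenerate'' is not needed (the theorem only claims genericity away from $s_0^{nor}=-1/2,-3/4,-5/8$) and is in fact not established in the paper --- the footnote in Section~\ref{sec:422} explicitly leaves that question open --- so you should simply drop it.
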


\begin{proof}
In the globally non-orientable case, when $f_0$ belongs to $H_3^1$ (with $\lambda = \gamma^{-1}>0, c>0, d~>0, bc=+1$) or $H_3^4$
(with $\lambda = \gamma^{-1}<0, c>0, d>0,bc = +1$), the bifurcation curves $B_k^{\pm 1},B_k^{2-}$ on the parameter plane $(\mu,\alpha)$ are given
by~(\ref{muk+1-1n}). It is easy to see from these formulae that all the curves $B_k^{\pm 1},B_k^{2-}$ mutually intersect
and cross the axis $\alpha=0$ at the points $\mu = -(d)^{-1}(s_0^{nor}+...)\lambda^{2k}$ and $\mu = -(d)^{-1}(s_0^{nor}+1+...)\lambda^{2k}$,
and the axis $\mu=0$ at the points  $\alpha = (dy^-)^{-1}(s_0^{nor}+...)\lambda^{k}$ and $\alpha =  (dy^-)^{-1}(s_0^{nor}+1+...)\lambda^{k}$.
Then, if $-1 < s_0 <0$, all the domains $E_k^2$ with sufficiently large $k$ contain the origin $(\mu=0,\alpha=0)$.

Moreover, by the Rescaling Lemma~\ref{henmain}, all the first return maps $T_k$ {have ``the same expression'' for} $\mu=0$ and $\alpha=0$.
Indeed, we obtain from~(\ref{mui}) and~(\ref{s0}) that the rescaled form~(\ref{henon0}) of $T_k$ in this case looks as
\begin{equation}
\bar{X} \; = \; Y  + O(k\lambda^{2k}), \bar{Y}  \;=\;  (- s_0^{nor} + \rho_k^4) + X + \frac{f_{03}}{d^2}\lambda^{k} Y^3 +
O(k\lambda^{2k}) \;.
\label{henon000ng}
\end{equation}
Then, if $-1<s_0<0$, every map~(\ref{henon000ng}) with sufficiently large $k$ has an {elliptic 2-periodic orbit} which is generic if
$s_0\neq -1/2, -3/4, -5/8$, see Section~\ref{sec:422}.
\end{proof}

In Figure~\ref{fig2marng}
we give an illustration of this theorem.
\begin{figure}[htb]
\centerline{\epsfig{file=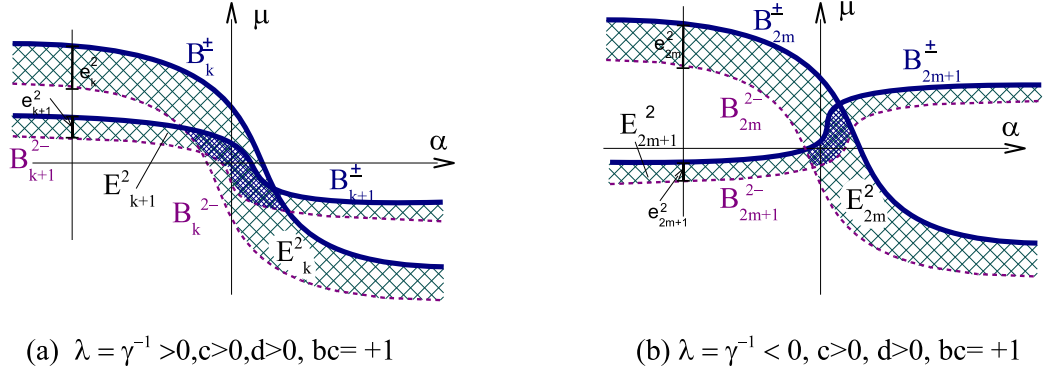, width=16cm
}} \caption{{\footnotesize Elements of the bifurcation diagrams for families $f_{\mu,\alpha}$ in the globally non-orientable maps $f_0$ of
the third class in
(a) $H_3^1$ and (b)  $H_3^4$.
}}
\label{fig2marng}
\end{figure}

We consider now the locally non-orientable case. Then we recall that $f_0 \in H_3^2\cup H_3^3$,  $\lambda\gamma = -1$ and $bc=-1$ (i.e., the
local map $T_0$ is non-orientable and the global map $T_1$ is orientable).

\begin{theorem}
{\rm (I)} Let $f_0$
belong to $H_3^{2,1}$ or $H_3^{3,1}$ and
$f_{\mu,\alpha}$ be a two parameter general unfolding
with the governing parameters $\mu$ and $\alpha$. Then in any sufficiently small neighbourhood $V$ of the point $(\mu=0,\alpha=0)$ there are
infinitely many
 domains $\tilde E_{2m}$ and $\tilde E_{2m+1}^2$,
$m=\bar m, \bar m +1,...$, which accumulate to the axis $\mu=0$ as $m\to\infty$, and the following holds.

\begin{itemize}
\item[{\rm Ia.}]
In $V$   all the domains $\tilde E_{2m}$ are crossed and intersect the axis $\mu$, whereas the domains $\tilde E_{2m+1}^2$
are not mutually crossed and do not intersect the axis $\mu$.
\item[{\rm Ib.}]
If  $-3 < s_0 <1$, all the domains $\tilde E_{2m}$ contain the origin $(\mu=0,\alpha=0)$ and, hence, the map $f_{0,0}$
has infinitely many single-round elliptic periodic orbits of all periods of the form $2m+q$, where $m=\bar m, \bar m +1,...$. Moreover, if
$s_0 \neq 0, - 5/4$, all these orbits are generic. 
\end{itemize}

{\rm (II)} Let $f_0$
belong to $H_3^{2,2}$ or $H_3^{3,2}$ and
$f_{\mu,\tilde\alpha}$ be a two parameter general unfolding with the governing parameters $\mu$ and $\tilde\alpha$.
\footnote{Recall that $\alpha = cx^+/y^- -1$ and $\tilde\alpha = cx^+/y^- +1$, i.e., $\tilde\alpha = \alpha +2$ and  both
$\alpha = 0$ and $\tilde\alpha = 0$ correspond to $\tau =0$, see~(\ref{eq:tau}).} Then in any sufficiently small neighbourhood $\tilde V$
of the origin $(\mu=0,\tilde\alpha=0)$ there are infinitely many
the domains $\tilde E_{2m}$ and $\tilde E_{2m+1}^2$,
$m=\bar m, \bar m +1,...$, which accumulate to the axis $\mu=0$ as $m\to\infty$, and the following holds.
\begin{itemize}
\item[{\rm IIa.}]
In $\tilde V$
all the domains $\tilde E_{2m+1}^2$ mutually intersect, whereas the domains $\tilde E_{2m}$ with different $m$ do not intersect and do not cross
the axis $\mu=0$.
\item[{\rm IIb.}]
If $\;-1 < s_0^{nor} <0$, all the domains $\tilde E_{2m+1}^2$ contain the point $(\mu=0,\tilde\alpha=0)$ and, hence, the map $f_{0,0}$  has
infinitely many double-round elliptic periodic orbits of all periods of the form $2(2m+1+q)$, where $m=\bar m, \bar m +1,...$. Moreover,
if $s_0 \neq -1/2, -3/4, -5/8$, all these orbits are generic.
\end{itemize}
\label{th:lnorinf}
\end{theorem}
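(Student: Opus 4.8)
The plan is to follow verbatim the strategy of the proofs of Theorems~\ref{th:sympinf} and~\ref{th:gnorinf}, but now splitting the analysis according to the parity of $k$, since for $\lambda\gamma=-1$ the return map $T_{2m}$ is orientable (single-round elliptic fixed points, domains $\tilde E_{2m}$ bounded by the curves~(\ref{muk+1-1ln})) while $T_{2m+1}$ is non-orientable (elliptic $2$-periodic points, double-round domains $\tilde E_{2m+1}^2$ bounded by~(\ref{muk+1-1lln})). The one genuinely new feature, which drives the whole statement, is that the even-$k$ boundary curves are affine in the resonance coordinate $\alpha=cx^+/y^--1$, whereas the odd-$k$ ones are affine in the shifted coordinate $\tilde\alpha=\alpha+2=cx^+/y^-+1$; since $\alpha=0$ and $\tilde\alpha=0$ both correspond to $\tau=0$, the two subfamilies ``see'' the resonance at parameter values differing by $2$.

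For Part~(I), with governing parameters $(\mu,\alpha)$, I would first note from~(\ref{muk+1-1ln}) that, to leading order, each curve $\tilde B_{2m}^{\pm}$ is a line $\mu=-\lambda^{2m}y^-\alpha-\frac1d(\cdots)\lambda^{4m}$ of slope $-\lambda^{2m}y^-$; distinct geometric slopes force any two of them to cross inside $V$, and the axis $\{\mu=0\}$ is met at $\alpha=O(\lambda^{2m})$, close to the origin. This gives the mutual crossing and the intersection with $\{\mu=0\}$ in~Ia, exactly as in the symplectic case. By contrast, inserting $\alpha$ small into~(\ref{muk+1-1lln}) gives $\mu\approx-\lambda^{2m+1}y^-(\alpha+2)$, which is of order $\lambda^{2m+1}$ and of fixed sign on $V$ (positive, since $\lambda^{2m+1}<0$), dwarfing the strip width $\lambda^{4m+2}$; hence on $V$ the domains $\tilde E_{2m+1}^2$ stay off the axis $\{\mu=0\}$ and meet it, as well as one another, only near $\alpha=-2\notin V$. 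This is the non-crossing half of~Ia. For~Ib I substitute $\mu=0,\alpha=0$ into~(\ref{mui1norev}) to obtain $M=-s_0+O(k\lambda^k)$, so by the Rescaling Lemma~\ref{henmain} every $T_{2m}$ with $m$ large is, after rescaling, the orientable H\'enon map~(\ref{hen0or1}) at this $M$; Section~\ref{sec:421} then shows that $-3<s_0<1$ places $M$ in $(-1,3)$, producing a single-round elliptic fixed point (of period $2m+q$) inside every $\tilde E_{2m}$, generic whenever $s_0\neq 0,-5/4$.

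Part~(II) is the mirror image under the change of governing parameter to $\tilde\alpha=\alpha+2$. Now the odd-$k$ curves~(\ref{muk+1-1lln}) are affine in $\tilde\alpha$ with slopes $-\lambda^{2m+1}y^-$ and cross $\{\mu=0\}$ at $\tilde\alpha=O(\lambda^{2m+1})$, so the domains $\tilde E_{2m+1}^2$ mutually intersect and meet the axis $\{\mu=0\}$ near the origin, whereas the even-$k$ curves~(\ref{muk+1-1ln}), being affine in $\alpha=\tilde\alpha-2$, acquire the leading term $-\lambda^{2m}y^-(\tilde\alpha-2)\approx 2\lambda^{2m}y^->0$ and are pushed off $\{\mu=0\}$ inside $\tilde V$; this yields~IIa. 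For~IIb I put $\mu=0,\tilde\alpha=0$ into~(\ref{mui1norodd}) to get $M=-s_0^{nor}+O(k\lambda^k)$; the Rescaling Lemma identifies each $T_{2m+1}$ with the non-orientable H\'enon map~(\ref{hen0nor1}), and by Section~\ref{sec:422} the assumption $-1<s_0^{nor}<0$ forces $M\in(0,1)$, so every $\tilde E_{2m+1}^2$ contains the origin and carries an elliptic $2$-periodic orbit (a double-round orbit of period $2(2m+1+q)$), generic unless $s_0^{nor}\in\{-1/2,-3/4,-5/8\}$.

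The only delicate point, and thus the main obstacle, is the parity bookkeeping: one must carry along simultaneously the sign of $\lambda^k$, the correct constant ($s_0$ for the orientable even-$k$ maps, $s_0^{nor}$ for the non-orientable odd-$k$ maps), and the shift between $\alpha$ and $\tilde\alpha$. The entire mechanism separating the ``wrong-parity'' domains from the origin in~Ia and~IIa is precisely this shift by $2$: it promotes the leading coefficient of the boundary curves from the $O(\lambda^{2k})$ term (present when the resonance coordinate vanishes) to an $O(\lambda^{k})$ term, ejecting those domains from a small neighbourhood of $(\mu=0,\alpha=0)$ while leaving the ``right-parity'' domains nested around it.
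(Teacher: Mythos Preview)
Your proposal is correct and follows essentially the same approach as the paper: both proofs read off the boundary curves from~(\ref{muk+1-1ln}) and~(\ref{muk+1-1lln}), use the distinct slopes $-\lambda^k y^-$ to get mutual crossing of the ``right-parity'' domains, and invoke the Rescaling Lemma~\ref{henmain} together with Sections~\ref{sec:421}--\ref{sec:422} to identify $M=-s_0$ (resp.\ $M=-s_0^{nor}$) at the origin and conclude ellipticity and genericity. The paper phrases the separation mechanism for the ``wrong-parity'' domains via the sign of $c$ (namely $c>0\Rightarrow\alpha>-1\Rightarrow\alpha+2>1$ in case~I, and $c<0\Rightarrow\alpha<-1$ in case~II), whereas you phrase it as the shift by $2$ between the resonance coordinates $\alpha$ and $\tilde\alpha$; these are the same observation.
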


\begin{proof}
I) If $f_0$ belongs to $H_3^{2,1}$ (where $\lambda = -\gamma^{-1}<0, c>0, d<0, bc=-1$) or $H_3^{3,1}$ (where
$\lambda = - \gamma^{-1}<0, c>0, d>0,bc = -1$), the bifurcation curves $\tilde B_{2m}^{+},\tilde B_k^{-}$ on the parameter plane $(\mu,\alpha)$ are
given by formulae~(\ref{muk+1-1ln}) with $k=2m$, where $s_0$ satisfies~(\ref{s0symp}). Then it follows from these formulae that all the
curves $\tilde B_{2m}^{+},\tilde B_{2m}^{-}$ mutually intersect and they intersect the axis $\alpha=0$ at the points
$\mu = -(d)^{-1}(s_0 -1 +...)\lambda^{4m}$ and $\mu = -(d)^{-1}(s_0+3+...)\lambda^{4m}$,
and the axis $\mu=0$ at the points  $\alpha = - (dy^-)^{-1}(s_0-1 +...)\lambda^{2m}$ and $\alpha =  -(dy^-)^{-1}(s_0+3+...)\lambda^{2m}$.
Then, if $-3 < s_0 <1$, all the domains $E_{2m}^2$ with sufficiently large $k$ contain the origin $(\mu=0,\alpha=0)$.

Moreover, by the Rescaling Lemma~\ref{henmain}, all the first return maps $T_k$ {have ``the same expression'' for} $\mu=0$ and $\alpha=0$. Indeed,
we obtain from~(\ref{mui}) that the rescaled form~(\ref{henon0}) of $T_k$  takes the form
(\ref{henon000n}) with $k=2m$.
Then, if $-3<s_0<1$ every map~(\ref{henon000ng}) with sufficiently large $k=2m$ has an elliptic fixed point which is generic if
$s_0\neq 0, -5/4$, see Section~\ref{sec:421}.

Since $c>0$ in the case under consideration, it follows from~(\ref{eq:tau}) that $\alpha > -1$. This means that the curves
$\tilde B_{2m+1}^{\pm 1}$ and $\tilde B_{2m+1}^{2-}$, given by formulae~(\ref{muk+1-1lln}) with $k=2m+1$, as well as the corresponding
domains $\tilde E_{2m+1}^2$ do not mutually intersect for different sufficiently large $m$. Moreover, they accumulate to the axis
$\mu =0$ as $m\to \infty$ from one side ($\mu>0$, since $\alpha+2>0$ and $\lambda^k <0$ for odd $k$). This completes the proof for the case I.

II. Let now $f_0$ belong to $H_3^{2,2}$ (where $\lambda = -\gamma^{-1}<0, c<0, d<0, bc=-1$) or $H_3^{3,2}$ (where
$\lambda = - \gamma^{-1}<0, c<0, d>0,bc = -1$). Therefore, since $\alpha <-1$, in contrast to the previous case, the bifurcation curves
$\tilde B_{2m}^{+}$ and $\tilde B_{2m}^{-}$ (see formula~(\ref{muk+1-1ln}) with $k=2m$) as well as the corresponding domains $\tilde E_{2m}$
do not mutually intersect in $\tilde V$. Moreover, they
accumulate to the axis $\mu =0$ as $m\to \infty$ from one side ($\mu>0$, since $\alpha < -1$ and $\lambda^k >0$ for even $k$).

Since $\alpha < -1$ in the case under consideration,  the curves $\tilde B_{2m+1}^{\pm 1}$ and $\tilde B_{2m+1}^{2-}$, given by
formulae~(\ref{muk+1-1lln}) with $k=2m+1$, can now intersect in $\tilde V$
for different sufficiently large $m$.
In this case the intersection points with the axis $\tilde\alpha=0$ have the coordinates
$\mu = -(d)^{-1}(s_0^{nor} +...)\lambda^{2k}$ and $\mu = -(d)^{-1}(s_0+1+...)\lambda^{2k}$ with $k=2m+1$; and with the axis $\mu=0$ the
coordinates $\tilde\alpha = - (dy^-)^{-1}(s_0^{nor} +...)\lambda^{2m+1}$ and $\tilde\alpha =  -(dy^-)^{-1}(s_0^{nor}+1+...)\lambda^{2m+1}$.
Then if $-1 < s_0 <0$, all the domains $\tilde E_{2m+1}^2$ with sufficiently large $m$ contains the origin $(\mu=0,\tilde\alpha=0)$ of $\tilde V$.

Moreover, by the Rescaling Lemma~\ref{henmain}, all the first return maps $T_{2m+1}$ {have ``the same expression'' for} $\mu=0$ and
$\tilde\alpha=0$ (i.e., $cx^+ = - y^-$). Indeed, we obtain from~(\ref{muinor}) that the rescaled form~(\ref{henon0}) of $T_k$  takes the form
\begin{equation}
\bar{X} \; = \; Y  + O(k\lambda^{2k}), \bar{Y}  \;=\;  (- s_0^{nor} + \rho_k^4) + X - \frac{f_{03}}{d^2}\lambda^{k} Y^3 +
O(k\lambda^{2k}) \;,
\label{henon000ngl}
\end{equation}
where $k=2m+1$ and $\rho_k^4 = O(k\lambda^k)$ is a small coefficient (a correction to $s_0^{nor}$). (This map differs from~(\ref{henon000ngl})
only by the sign {in front of} $Y^3$).
Then, if $-1<s_0^{nor}<0$ every map~(\ref{henon000ngl}) with sufficiently large $k=2m+1$ has an elliptic {2-periodic orbit} which is generic,
if $s_0\neq -1/2, -3/4, -5/8$, see Section~\ref{sec:422}.
\end{proof}

In Figures~\ref{fig23mar} and~\ref{fig33mar}
we give an illustration of this theorem for different cases.

\begin{figure}[htb]
\centerline{\epsfig{file=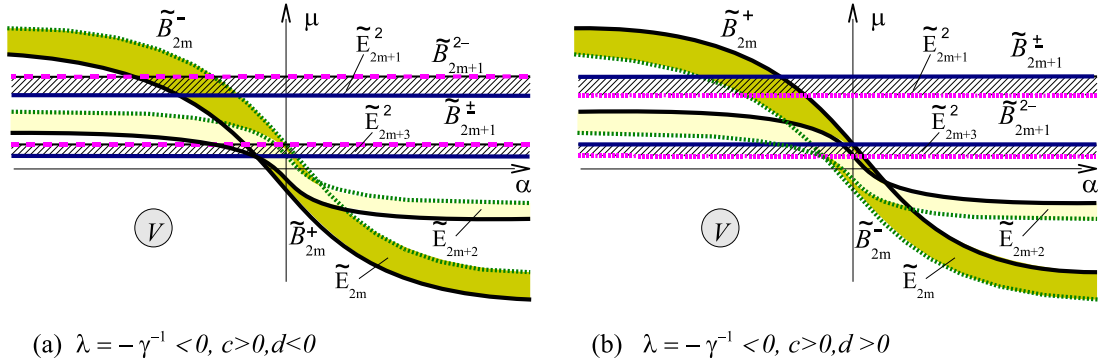, width=16cm
}} \caption{{\footnotesize Elements of the bifurcation  diagram in a neighbourhood $V(\mu=0,\alpha=0)$  for
the families $f_{\mu,\alpha}$ in the cases where (a)$f_0 \in H_3^{2,1}$; (b) $f_0 \in H_3^{3,1}$. } }
\label{fig23mar}
\end{figure}

\begin{figure}[htb]
\centerline{\epsfig{file=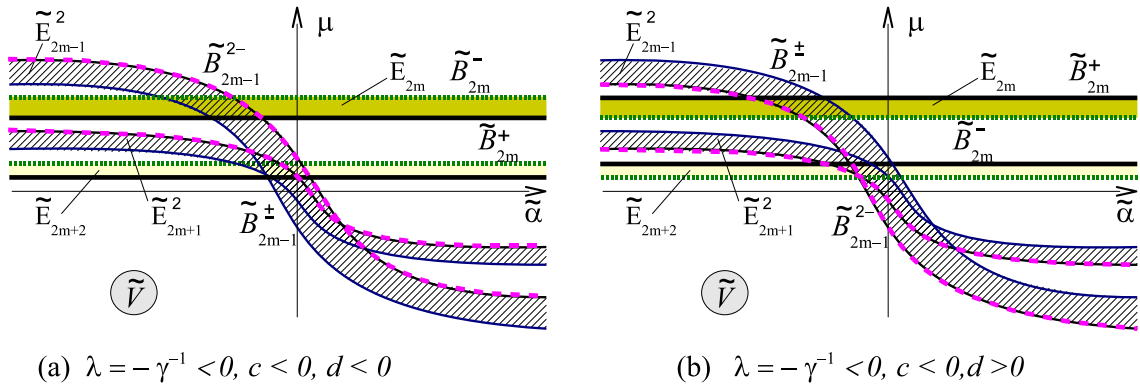, width=16cm
}} \caption{{\footnotesize Elements of the bifurcation  diagram in a neighbourhood $\tilde V(\mu=0,\tilde\alpha=0)$  for
the families $f_{\mu,\alpha}$ in the cases where (a)$f_0 \in H_3^{2,2}$; (b) $f_0 \in H_3^{3,2}$. } }
\label{fig33mar}
\end{figure}

\section{Invariants of homoclinic tangencies.}
\label{newinv}

We {have just seen} that in the case of global resonance $\tau=0$ the dynamics of APMs 
of the
third class (except for maps in $H_3^5$) depends, {indeed},  only on the  quantity $s_0$. In this
section we prove, for completeness,  the invariance of $s_0$.

First, we recall the result from~\cite{GS87} that the quantity $\tau$ is an invariant of two-dimensional
diffeomorphisms with homoclinic tangencies to a saddle with $\sigma\equiv |\lambda\gamma| = 1$. In particular, it was proved in~\cite{GS87} that
the value of $\tau$ does not depend {neither on} the  choice of pairs of homoclinic points $M^+$ and $M^-$ {nor in the}
coordinate changes conserving the
first order normal
form of the saddle map~$T_0$. This implies that, in the case of APMs, 
$\tau$ is invariant in those
$C^r$-coordinates which conserve the first order normal form~(\ref{eq:nf1}) of the saddle map $T_0$.
Note that, as it was shown in~\cite{AfrT05}, $\tau$ is also invariant under $C^1$-linearization
coordinates. In principle, this result {could} be used for proving the existence/absence of topological Smale horseshoes
near a homoclinic tangency.

{We now} prove the invariance of $s_0$. However, in contrast to $\tau$, we prove the invariance of $s_0$ in those
$C^{r-2}$-coordinates which conserve the second order normal form~(\ref{eq:nfn2}) of the local map $T_0$ (or any $n$-order normal
form~(\ref{eq:nfn}) for $n\geq 2$).
Naturally, $s_0$ ``disappears'' when a $C^1$-linearization is used, since $s_0$ depends on the coefficients
of $T_1$ in the quadratic terms which become indefinite for
$C^1$-changes.

\begin{lm}
Let $f_0\in H_3^1\cup H_3^4$ and $\tau=0$. Then, in those coordinates where the local map $T_0$ takes the second normal form~(\ref{eq:nfn2}),
the value of   $s_0$ does not depend on the choice of pairs of  {homoclinic} points of the orbit $\Gamma_0$.
\label{invs0}
\end{lm}

\begin{proof}
We take first the  pair $M^{+\prime}=T_0(M^+)$ and $M^-$ of points of
$\Gamma_0$. Then the
new global map $T_1^\prime  = T_0 T_1:\Pi^-\to T_0(\Pi^+)$ can be written, by~(\ref{eq:nfn2}), in the form
\begin{equation}
\begin{array}{l}
\bar x^\prime = \lambda \bar x (1+ \beta_1\bar x\bar y) +
O\left[\bar x^3\bar y^2\right],\;\;
\bar y^\prime = \gamma \bar y (1 - \beta_1\bar x\bar y) +
O\left[\bar x^2\bar y^3\right],\;
\end{array}
\label{01nf2}
\end{equation}
where $\bar x = x^+ + F(x,y-y^-), \bar y = G(x,y-y^-)$.
We will calculate the corresponding coefficients (that define a new $s_0^\prime$) at the homoclinic
point $M^-(x=0,y=y^-)$ using that  $\bar x = x^+, \bar y =0$,  $G_y(0,0)=0$ at this point. It follows from~(\ref{01nf2}) that
$$
\frac{\partial\bar y^\prime}{\partial \bar x} = 0, \;\; \frac{\partial\bar x^\prime}{\partial \bar
x} = \lambda,\;\; \frac{\partial\bar y^\prime}{\partial \bar y} = \gamma \;\;{\mbox{at}}\;\; \bar x
= x^+, \bar y =0
$$
and the  $O$-terms in~(\ref{01nf2}) vanish for $\bar y = 0$ along with all the
required derivatives (note that {only} the second derivatives  {of $\bar y^\prime$} are needed). Thus, we
have
$$
\begin{array}{l}
\displaystyle a^\prime = \frac{\partial\bar x^\prime}{\partial x}
= \lambda \frac{\partial F}{\partial x} + \lambda \beta_1 (\bar
x)^2 \frac{\partial G}{\partial x} + O(\bar y), \;\;
c^\prime = \frac{\partial\bar y^\prime}{\partial x} = \gamma
\frac{\partial G}{\partial x} +  O(\bar y), \\~\\
\displaystyle d^\prime = \frac{1}{2}\frac{\partial^2\bar
y^\prime}{\partial y^2} =
\displaystyle \frac{1}{2}\gamma \frac{\partial^2 G}{\partial y^2}
+ O(\bar y) + O(\partial\bar y/\partial y), \\~\\
\displaystyle f_{20}^\prime = \frac{1}{2}\frac{\partial^2\bar
y^\prime}{\partial x^2} =
\displaystyle \frac{1}{2}\gamma \left(\frac{\partial^2 G}{\partial
x^2} - 2\beta_1 \bar x\left(\frac{\partial G}{\partial
x}\right)^2\right)  + O(\bar y), \\~\\
\displaystyle f_{11}^\prime = \frac{\partial^2\bar
y^\prime}{\partial x\partial y} =
\displaystyle \gamma \frac{\partial^2 G}{\partial x\partial y} +
O(\bar y) + O(\partial\bar y/\partial y).
\end{array}
$$
Since we calculate these derivatives at the point $x=0,y=y^-$, where $\bar x=x^+,\bar y =0$ and
$\partial\bar y/\partial y \equiv G_y =0$, {we get}
\begin{equation}
{x^+}^\prime = \lambda x^+, \;
a^\prime = \lambda a + \lambda (x^+)^2\beta_1 c, \;\; c^\prime = \gamma c,\;\; d^\prime = \gamma d,
\;\; f_{20}^\prime = \gamma f_{20} - \gamma c^2\beta_1 x^+,\;\;f_{11}^\prime =  \gamma f_{11}.
\label{s0prime1}
\end{equation}

Then, by ~(\ref{s0}), we obtain that
$$
\begin{array}{l}
s_0^\prime = d^\prime{x^+}^\prime(a^\prime c^\prime + f_{20}^\prime {x^+}^\prime) + \frac{1}{2}f_{11}^\prime {x^+}^\prime\left(1 + \nu_1 -
\frac{1}{2}f_{11}^\prime {x^+}^\prime\right) = \\~\\ = \lambda\gamma dx^+\left((a + (x^+)^2\beta_1 c) c + (f_{20} - c^2\beta_1 x^+)x^+ \right) +
\lambda\gamma \frac{1}{2}f_{11} {x^+}\left(1 + \nu_1 -
\frac{1}{2}f_{11}{x^+}\right) = s_0.
\end{array}
$$

We take now the pair $M^{+\prime}=M^+$ and $M^{-\prime}=
T_0^{-1}(M^-)$ of points of $\Gamma_0$. Then
the new global map
$T_1^\prime = T_1
T_0:  T_0^{-1}(\Pi^-)\to\Pi^+$ can be written as
$$
\bar x = x^+ + F(x^\prime, y^\prime-y^-),\;\;
\bar y  = G(x^\prime, y^\prime-y^-),
$$
where $x^\prime = \lambda  x (1+ \beta_1 x y) + O(x^3 y^2)$ and $y^\prime = \gamma y (1 - \beta_1 x y) + O(x^2 y^3)$ are coordinates in $\Pi^-$
and $(x,y)\in
T_0^{-1}\Pi^-$,   $(\bar x,\bar y)\in
\Pi^+$. Thus, we have that $x^{+\prime}=x^+,
y^{-\prime}=\gamma^{-1}y^-$. Further, we calculate other coefficients
as the corresponding derivatives of $(\bar x, \bar y)$ {with respect to} $(x,y)$ calculated at the point $x=0, y= \gamma^{-1}y^-$. We
{get}
$$
\begin{array}{l}
\displaystyle a^{\prime} = \frac{\partial F }{\partial
x^\prime}\frac{\partial x^\prime}{\partial x} + \frac{\partial F
}{\partial y^\prime} \frac{\partial y^\prime}{\partial x}, \;\;\;
\displaystyle c^{\prime} = \frac{\partial G }{\partial
x^\prime}\frac{\partial x^\prime}{\partial x} + \frac{\partial G
}{\partial y^\prime} \frac{\partial y^\prime}{\partial x}, \\~\\
\displaystyle f_{11}^\prime = \frac{\partial^2 G }{(\partial
x^\prime)^2}\frac{\partial x^\prime}{\partial x}\frac{\partial
x^\prime}{\partial y} + \frac{\partial^2 G }{\partial
x^\prime\partial y^\prime}\left(\frac{\partial y^\prime}{\partial
x}\frac{\partial x^\prime}{\partial y} + \frac{\partial
x^\prime}{\partial x}\frac{\partial y^\prime}{\partial y}\right) +
\frac{\partial^2 G }{\partial y^{\prime 2}}\frac{\partial
y^\prime}{\partial x}\frac{\partial y^\prime}{\partial y} +
\frac{\partial G }{\partial x^\prime}\frac{\partial^2
x^\prime}{\partial x\partial y} + \frac{\partial G }{\partial
y^\prime}\frac{\partial^2 y^\prime}{\partial x\partial y}, \\~\\
\displaystyle d^\prime = \frac{1}{2} \left(\frac{\partial^2 G
}{(\partial x^\prime)^2}\left(\frac{\partial x^\prime}{\partial
y}\right)^2 + 2 \frac{\partial^2 G }{\partial x^\prime\partial
y^\prime} \frac{\partial y^\prime}{\partial y}\frac{\partial
x^\prime}{\partial y} + \frac{\partial^2 G }{\partial y^{\prime
2}} \left(\frac{\partial y^\prime}{\partial y}\right)^2 +
\frac{\partial G }{\partial x^\prime}\frac{\partial^2
x^\prime}{\partial y^2} + \frac{\partial G }{\partial y^\prime}
\frac{\partial^2 y^\prime}{\partial y^2}\right), \\~\\
\displaystyle f_{20}^\prime = \frac{1}{2} \left(\frac{\partial^2 G
}{(\partial x^\prime)^2}\left(\frac{\partial x^\prime}{\partial
x}\right)^2 + 2 \frac{\partial^2 G }{\partial x^\prime\partial
y^\prime} \frac{\partial y^\prime}{\partial x}\frac{\partial
x^\prime}{\partial x} + \frac{\partial^2 G }{\partial y^{\prime
2}} \left(\frac{\partial y^\prime}{\partial x}\right)^2 +
\frac{\partial G }{\partial x^\prime}\frac{\partial^2
x^\prime}{\partial x^2} + \frac{\partial G }{\partial y^\prime}
\frac{\partial^2 y^\prime}{\partial x^2}\right).
\end{array}
$$
Since
$$
\frac{\partial G }{\partial y^\prime}=0,\;\;
\frac{\partial}{\partial y}\left(x^\prime,\frac{\partial
x^\prime}{\partial x},\frac{\partial x^\prime}{\partial y}\right)
=0, \;\;\frac{\partial x^\prime}{\partial x} = \lambda, \;\;
\frac{\partial y^\prime}{\partial y} = \gamma,\;\; \frac{\partial
y^\prime}{\partial x} = -\gamma^{-1}\beta_1 (y^-)^2
$$
at the point $M^{-\prime}\left(x=0,y=\gamma^{-1}y^-\right)$, we obtain that
\begin{equation}
\begin{array}{l}
a^\prime  = \lambda a - b\beta_1\gamma^{-1}(y^-)^2, \;\; c^\prime
= \lambda c, \;\; f_{11}^\prime = \lambda\gamma f_{11} - 2d\beta_1 (y^-)^2, \;\;
d^\prime = d\gamma^2, \\
f_{20}^\prime = f_{20}\lambda^2 - f_{11} \lambda^2\beta_1 (y^-)^2
+ d\lambda^2 \beta_1^2 (y^-)^4 + c\lambda^2\beta_1 y^-,\;\;
\end{array}
\label{s0prime2}
\end{equation}
Since $\lambda\gamma =1$, we obtain, by~(\ref{s0}), that
$$
\begin{array}{l}
s_0^\prime = d^\prime{x^+}^\prime(a^\prime c^\prime + f_{20}^\prime {x^+}^\prime) + \frac{1}{2}f_{11}^\prime {x^+}^\prime\left(1 + \nu_1 -
\frac{1}{2}f_{11}^\prime {x^+}^\prime\right) = \\ = dx^+\left[ac - cb\beta_1(y^-)^2 + f_{20}x^+ -
f_{11}x^+ \beta_1 (y^-)^2 +
dx^+ \beta_1^2 (y^-)^4 + c\beta_1 x^+y^-\right] + \\
+ \frac{1}{2}(f_{11}x^+ - 2d\beta_1 (y^-)^2x^+)\left(1 + \nu_1 -
\frac{1}{2}(f_{11}x^+
- 2d\beta_1 (y^-)^2x^+)\right) = \\ =  s_0 + d\beta_1 x^+y^- (cx^+ - bc
y^- -y^-(1+\nu_1)).
\end{array}
$$
Note that $\nu_1 = -bc$ in the case $\lambda\gamma=1$ and, thus, $s_0^\prime = s_0 + d\beta_1 x^+y^- (cx^+ - y^-)$. It follows that
$s_0^\prime = s_0$ at $cx^+ = y^-$ which is equivalent to $\tau=0$ if $c>0$.
\end{proof}

{In the locally non-orientable case, $s_0$ remains invariant with respect to
the choice of
any pair of homoclinic points of the needed type (see the definition of homoclinic
points of the needed time just before the condition \textbf{D} in
Section~\ref{sec:globmap}).}

\begin{lm}
Let  $f_0\in H_3^2\cup H_3^3$. Then, in coordinates where the  map $T_0$ takes the second normal form~(\ref{eq:nfn2}), the value of
$s_0$ does not depend on the choice of pairs of homoclinic points of the needed type.
\label{invs01}
\end{lm}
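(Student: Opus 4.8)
The plan is to mirror the proof of Lemma~\ref{invs0}, tracking how the global-map coefficients entering~(\ref{s0}) transform under elementary changes of the homoclinic pair, but now adapted to the locally non-orientable setting, where $\lambda\gamma=-1$ and, by~(\ref{eq:nfn2}), the first Birkhoff coefficient vanishes, $\beta_1\equiv 0$. It is convenient to isolate the two building blocks of $s_0$, namely $Q=dx^+(ac+f_{20}x^+)$ and $P=f_{11}x^+$, so that~(\ref{s0}) reads $s_0=Q+\frac{1}{2}P(1+\nu_1)-\frac{1}{4}P^2$. The structural remark that organizes the argument is that, since $T_0$ is non-orientable, condition~\textbf{D} forces us to compare only pairs of the needed type (those for which $T_1$ is orientable, $bc=-1$); hence any two admissible pairs $(T_0^p(M^+),T_0^r(M^-))$ differ by an \emph{even} number of elementary shifts $M^+\mapsto T_0^{\pm1}(M^+)$ and $M^-\mapsto T_0^{\pm1}(M^-)$, because the new global map $T_0^{p}T_1T_0^{-r}$ is orientable precisely when $p-r$ is even, a single shift reversing the orientation of $T_1$.

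First I would compute the transformations of $x^+,a,c,d,f_{20},f_{11}$ under the two elementary shifts $\Phi\colon M^+\mapsto T_0(M^+)$ (new global map $T_0T_1$) and $\Psi\colon M^-\mapsto T_0^{-1}(M^-)$ (new global map $T_1T_0$). These are exactly formulae~(\ref{s0prime1}) and~(\ref{s0prime2}) specialized to $\beta_1=0$, giving for $\Phi$ the rules $x^+\mapsto\lambda x^+$, $a\mapsto\lambda a$, $c\mapsto\gamma c$, $d\mapsto\gamma d$, $f_{20}\mapsto\gamma f_{20}$, $f_{11}\mapsto\gamma f_{11}$, and for $\Psi$ the rules $x^+$ unchanged, $a\mapsto\lambda a$, $c\mapsto\lambda c$, $d\mapsto\gamma^2 d$, $f_{20}\mapsto\lambda^2 f_{20}$, $f_{11}\mapsto\lambda\gamma f_{11}$. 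The one point requiring genuine verification, and the hard part of the argument, is that these formulae survive the passage from $\beta_1\neq 0$ to $\beta_1=0,\ \beta_2\neq 0$: one must check that the second Birkhoff terms $\beta_2(xy)^2$ now present in~(\ref{eq:nfn2}) do not contribute to the relevant first and second derivatives of $T_0$ at the homoclinic points. For $\Phi$ this is immediate, since all $\beta_2$-contributions carry a factor $\bar y$ and vanish at $M^+\in\{y=0\}$; for $\Psi$ the only surviving $\beta_2$-term, $\partial^2 y'/\partial x^2\neq 0$ on $\{x=0\}$, enters the chain-rule expressions for $d',f_{20}',f_{11}'$ only multiplied by $\partial\bar y/\partial y'=G_y(M^-)=0$, while the $\beta_2$-corrections to the $x'$-derivatives vanish identically at $x=0$. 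Thus the $\beta_2$-terms drop out exactly as the $\beta_1$-terms did in Lemma~\ref{invs0}.

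Finally I would substitute these rules into $Q$ and $P$ and use $\lambda\gamma=-1$, whence $(\lambda\gamma)^2=1$. A direct check gives, for both $\Phi$ and $\Psi$, that $Q\mapsto(\lambda\gamma)^2\,Q=Q$ and $P\mapsto\lambda\gamma\,P=-P$; so each elementary shift fixes $Q$ and reverses the sign of $P$, and two elementary shifts restore $P$ as well. Since pairs of the needed type differ by an even number of elementary shifts, both $Q$ and $P$ are invariant, and therefore $s_0=Q+\frac{1}{2}P(1+\nu_1)-\frac{1}{4}P^2$ is invariant for either value of $\nu_1$. In particular this covers simultaneously the even-$k$ quantity~(\ref{s0symp}) (where $\nu_1=+1$) and the odd-$k$ quantity $s_0^{nor}$ of~(\ref{s0nsymp}) (where $\nu_1=-1$), which completes the proof.
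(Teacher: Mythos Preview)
Your argument is correct and follows essentially the same route as the paper's proof: both compute how the coefficients $x^+,a,c,d,f_{20},f_{11}$ transform under the two elementary shifts $M^+\mapsto T_0(M^+)$ and $M^-\mapsto T_0^{-1}(M^-)$ (formulae~(\ref{s0prime1}) and~(\ref{s0prime2}) with $\beta_1=0$), and then use that admissible pairs differ by an even number of such shifts. Your packaging via the building blocks $Q=dx^+(ac+f_{20}x^+)$ and $P=f_{11}x^+$, together with the observation that each elementary shift fixes $Q$ and flips the sign of $P$, is a clean way to organise the same computation; the paper instead verifies three concrete generating cases (a)~$T_0^2(M^+),M^-$, (b)~$M^+,T_0^{-2}(M^-)$, (c)~$T_0(M^+),T_0^{-1}(M^-)$.

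There is one small point you glide over that the paper makes explicit in its case~(c): after a single $\Phi$-shift one has ${x^+}'=\lambda x^+<0$ (since $\lambda<0$ here), so the intermediate global map is no longer in the standard form with positive $x^+$, and one should in principle restore it by the reflection $x\mapsto -x$. This does not damage your argument, because that reflection sends $(x^+,a,c,d,f_{20},f_{11})\mapsto(-x^+,a,-c,d,f_{20},-f_{11})$ and hence leaves both $Q$ and $P$ invariant; but it is worth stating, since otherwise one has to check that the transformation rules for $\Psi$ can legitimately be applied after a single $\Phi$ even though the intermediate pair is not of the needed type and has ${x^+}'<0$. Your verification that the $\beta_2$-terms do not contribute is correct (the only nonvanishing second derivative $\partial^2 y'/\partial x^2$ enters only $f_{20}'$, multiplied by $G_y(M^-)=0$).
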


\begin{proof}
By condition~\textbf{D}, the pair $M^+$ and $M^-$ of homoclinic points is of the needed type (i.e., the corresponding map $T_1$ is orientable).
We prove the invariance of $s_0$ for the pairs a) $T_0^2(M^+)$ and $M^-$; b) $M^+$ and $T_0^{-2}(M^-)$ and c) $T_0(M^+)$ and $T_0^{-1}(M^-)$, which
are {all} of the needed type. Note that in the case $\lambda\gamma =-1$ the
calculations become much simpler, since $\beta_1=0$.

a) For the pair ${M^+}^\prime = T_0(M^+)$ and ${M^-}^\prime = M^-$ of homoclinic points, we obtain from~(\ref{s0prime1}) that
\begin{equation}
{x^+}^\prime = \lambda x^+, \;
a^\prime = \lambda a, \;\; c^\prime = \gamma c,\;\; d^\prime = \gamma d,
\;\; f_{20}^\prime = \gamma f_{20},\;\;f_{11}^\prime =  \gamma f_{11}.
\label{s0prime1a}
\end{equation}
Since $\lambda\gamma = -1$, we obtain then that $s_0^\prime = - s_0$. Analogously, for the pairs ${M^+}^{''} = T_0({M^+}^\prime)$ and
${M^-}^{''} = M^-$, we obtain that
$s_0^{''} =  - s_0^\prime$ and, hence $s_0^{''} =   s_0$.

b) For the pair ${M^+}^\prime = M^+$ and ${M^-}^\prime = T_0^{-1}(M^-)$ of homoclinic points, we have that ${x^+}^\prime = x^+$ and, by
(\ref{s0prime2}),
\begin{equation}
\begin{array}{l}
a^\prime  = \lambda a, \;\; c^\prime
= \lambda c, \;\; f_{11}^\prime =  - f_{11}, \;\;
d^\prime = d\gamma^2, \;\;
f_{20}^\prime = f_{20}\lambda^2.
\end{array}
\label{s0prime2b}
\end{equation}
Since $\lambda\gamma = - 1$, we obtain by~(\ref{s0}) that
\begin{equation}
\left[dx^+(ac + f_{20}x^+)\right]^\prime = dx^+(ac + f_{20}x^+) \;\;\mbox{and}\;\; \left[f_{11}x^+\right]^\prime = - f_{11}x^+.
\label{s0prime2bn}
\end{equation}
This implies evidently that $s_0^{''} = s_0$ for the needed type pair ${M^+}^{''} = M^+$ and ${M^-}^{''} = T_0^{-2}(M^-)$ of homoclinic points.

c) Consider first the pair ${M^+}^\prime = T_0(M^+)$ and ${M^-}^\prime = M^-$ for which formula~(\ref{s0prime1a}) holds {with a negative}
coordinate ${x^+}^\prime$ of the point  ${M^+}^\prime$. Therefore, we make the coordinate change $x\to -x, y \to y$ after which the
new map $T_1^\prime$ will have the following coefficients
$$
{x^+}^\prime = - \lambda {x^+}, \;\;
a^\prime  = \lambda a, \;\; c^\prime
= - \gamma c, \;\; f_{11}^\prime = -\gamma f_{11}, \;\;
d^\prime = d\gamma, \;\;
f_{20}^\prime = f_{20}\gamma ,
$$
which gives relation~(\ref{s0prime2bn}). Evidently, at the further transition to the pair ${M^+}^{''} = T_0(M^+)$ and ${M^-}^{''} = T_0^{-1}(M^-)$,
this gives the required equality $s_0^{''} =  s_0$.
\end{proof}

\section{The proof of Lemma~\ref{lemma:FSNF}.}
\label{appendix}

We start from the well-known fact that  the local stable and unstable manifolds of $O$ can be
straightened {out} by means of a certain $C^r$-symplectic change of coordinates,\footnote{Let us recall
some details of this. We can always write the local map in the form $\bar x = \lambda(\varepsilon)
x + h_1(x,y,\varepsilon) \;,\; \bar y = \gamma(\varepsilon)y + h_2(x,y,\varepsilon)$, where
$|\lambda\gamma|=1$, $h_i(0,0,\varepsilon)= 0$.
Let
$y=\varphi(x,\varepsilon)$ be the equation of $W^s_{loc}$. Then, by the change $\xi = x, \eta = y -
\varphi(x,\varepsilon)$, we straighten {out} $W^s_{loc}$. Moreover, this change is symplectic, since it
is produced by the generating function $V(x,\eta,\varepsilon)=x \eta + \int
\varphi(x,\varepsilon) dx$. The manifold $W^u_{loc}$ is straightened {out} analogously.} i.e., the map $T_0$
can be written in the following form
\begin{equation}
\begin{array}{l}
\bar x = \lambda(\varepsilon) x + f(x,y,\varepsilon)x \;,\; \bar y = \gamma(\varepsilon)y +
g(x,y,\varepsilon)y \;, \label{eq:form1}
\end{array}
\end{equation}
where $f(0,0,\varepsilon)\equiv 0, g(0,0,\varepsilon)\equiv 0$.
In these coordinates, the fixed point $O_\varepsilon$ is in the origin and the equations of
$W^s_{loc}$ and $W^u_{loc}$ are $y=0$ and $x = 0$, respectively, for all sufficiently small
$\varepsilon$.

We consider  the map $T_\varepsilon$ in the initial form~(\ref{eq:form1}). This map is $C^r$ and can
be represented in the following ``n-th order extended form''
\begin{equation}
\begin{array}{l}
\bar x = \lambda(\varepsilon) x\{1 +
[\varphi_1^{(0)}(x,\varepsilon) + \psi_1^{(0)}(y,\varepsilon)] +
[\beta_1^{(1)} +
\varphi_1^{(1)}(x,\varepsilon) + \psi_1^{(1)}(y,\varepsilon)]\cdot xy + \\
\quad + [\beta_1^{(2)} + \varphi_1^{(2)}(x,\varepsilon) +
\psi_1^{(2)}(y,\varepsilon)]\cdot (xy)^2
+ \cdots + \\
\quad + [\beta_1^{(n)} + \varphi_1^{(n)}(x,\varepsilon) +
\psi_1^{(2)}(y,\varepsilon)]\cdot (xy)^n \} +
O(x^{n+2}y^{n+1})  \;,\;  \\
\bar y = \gamma(\varepsilon) y\{1 +
[\varphi_2^{(0)}(x,\varepsilon) + \psi_2^{(0)}(y,\varepsilon)] +
\quad + [\beta_2^{(1)} + \varphi_2^{(1)}(x,\varepsilon) +
\psi_2^{(1)}(y,\varepsilon)]\cdot xy\\
\quad + [\beta_2^{(2)} +
\varphi_2^{(2)}(x,\varepsilon) + \psi_2^{(2)}(y,\varepsilon)]\cdot
(xy)^2
+ \cdots  + \\
\quad + [\beta_2^{(n)} + \varphi_2^{(n)}(x,\varepsilon) +
\psi_2^{(2)}(y,\varepsilon)]\cdot (xy)^n\} + O(x^{n+1}y^{n+2})
\end{array}
\label{can3.1}
\end{equation}
where $|\lambda\gamma|=1$, $\beta_1^{(i)}$ and $\beta_2^{(i)}$ are  {constants}, 
$i=1,\ldots,n,$ $\varphi_k^{(i)}(0,\varepsilon)=\psi_k^{(i)}(0,\varepsilon)\equiv 0 \;,\; k=1,2$.
Denote $\alpha_{ki} \equiv [\varphi_k^{(i)}(x,\varepsilon) + \psi_k^{(i)}(y,\varepsilon)]$. Since
$T_\varepsilon \in C^r$, we have, due to the  {expansion} 
in~(\ref{can3.1}),  that $\alpha_{ki}\in C^{r-2i-1}$.

{Lemma~\ref{lemma:FSNF}} states that there exist canonical changes which {cancel the} functions $\alpha_{ki}$ and
transform constants $\beta_1^{i}$ and $\beta_2^{i}$ into the ``Birkhoff-Moser coefficients'' $\beta_i$
and $\tilde\beta_i$~, respectively. {In making} these changes {we will} see that
the change {cancelling} the term $\alpha_{ki}$ is $C^{r-2i-2}$~, while the next term
$\alpha_{k,i+1}$ is $C^{r-2(i+1)-1} = C^{r-2i-3}$~. That is, such a change {will} not change the
smoothness of the high order terms (in the sense of the  {expansion} 
in~(\ref{can3.1}). Thus, the
final smoothness will be equal to the smoothness of the last coordinate transformation.

Now we prove the lemma by induction on $i$. Note that Lemma~\ref{lemma:NF1order} can be considered
here as ``the first step of induction''.

Suppose that for some $i \leq n$ we have brought the map $T_\varepsilon$ to the form
\begin{equation}
\begin{array}{l}
\bar x = \lambda(\varepsilon) x\{1 + \beta_1(\varepsilon) \cdot xy
+ \beta_2(\varepsilon) \cdot (xy)^2 + ... +
\beta_{i-1}(\varepsilon) \cdot (xy)^{i-1} + \\
\quad + \beta_1^{(i)} + [\varphi_1^{(i)}(x,\varepsilon) +
\psi_1^{(i)}(y,\varepsilon)]\cdot (xy)^i \} +
O(x^{i+2}y^{i+1})  \;,\;  \\
\bar y = \gamma(\varepsilon) y\{1 + \tilde\beta_1(\varepsilon) \cdot xy +
\tilde\beta_2(\varepsilon) \cdot (xy)^2 + ...
+ \tilde\beta_{i-1}(\varepsilon) \cdot (xy)^{i-1} + \\
\quad + \tilde\beta_2^{(i)} + [\varphi_2^{(i)}(x,\varepsilon) + \psi_2^{(i)}(y,\varepsilon)]\cdot
(xy)^i\} + O(x^{i+1}y^{i+2})
\end{array}
\label{can3.2}
\end{equation}

Let us show that there exists a canonical change {cancelling} the terms $\alpha_{1i}$ and
$\alpha_{2i}$ and that the smoothness of such a change is equal to the smoothness of functions
$\alpha_{k,i}$ minus one. Then, the lemma will be proven.

For this goal we make two consecutive canonical changes with the following generating functions
\begin{equation}
V_1^{(i)}(x,\eta) = x\eta + (x\eta)^{i+1} v_1^{(i)}(x,\varepsilon) \;\;{\rm and}\;\; V_2^{(i)}(x,\eta) = x\eta +
(x\eta)^{i+1} v_2^{(i)}(\eta,\varepsilon),
\label{canv1-v2}
\end{equation}
where $v_k^{(i)}(0,\varepsilon)=0\;,\;k=1,2$. By means of these changes one can vanish functions
$\varphi_1^{(i)}$ and $\psi_2^{(i)}$ in~(\ref{can3.2}), respectively. After this, we show that the new
functions $\tilde\varphi_2^{(i)}$ and $\tilde\psi_1^{(i)}$  vanish due to equality to one of
$|J(T_\varepsilon)|$.

First, we make the change {associated to} the generating function
$V_1^{(i)}$ where $v_1^{(i)}(0,\varepsilon)=0$~. Thus, this change
is
\begin{equation}
\begin{array}{l}
\xi = x + (i+1)  x^{i+1}\eta^i v_1^{(i)}(x,\varepsilon) \;\;,\;\;
y = \eta +  x^{i}\eta^{i+1} \tilde v_1^{(i)}(x,\varepsilon)
\end{array}
\label{can3.3}
\end{equation}
where $\tilde v_1^{(i)}(x,\varepsilon) = (i+1)
v_1^{(i)}(x,\varepsilon) + x \cdot\partial v_1^{(i)}/\partial x$
and $\tilde v_1^{(i)}(0,\varepsilon) \equiv 0$~.

The first equation of~(\ref{can3.2}) is transformed to
\begin{equation}
\begin{array}{l}
\bar \xi = \bar x + (i+1) \bar x^{i+1}\bar\eta^i v_1^{(i)}(\bar x,\varepsilon) =
 \lambda x\{1 + \beta_1 \cdot xy +
\beta_2 \cdot (xy)^2 + \cdots  \\ \quad + \beta_{i-1} \cdot (xy)^{i-1} +
\beta_1^{(i)}\cdot (xy)^i +
\varphi_1^{(i)}(x,\varepsilon)\cdot (xy)^i + \\ \quad
+ \psi_1^{(i)}(y,\varepsilon)\cdot (xy)^i \} + (i+1) \lambda^{i+1}x^{i+1}\gamma^{i}y^{i}
v_1^{(i)}(\lambda x,\varepsilon) + O(\xi^{i+2}\eta^{i+1}) \\ =   \lambda\xi  +
 x^{i+1}y^i
\left[-(i+1) \lambda v_1^{(i)}(x,\varepsilon)) + (i+1) \lambda \delta_i v_1^{(i)}(\lambda
x,\varepsilon) +
\lambda \varphi_1^{(i)}(x,\varepsilon)\right] + \\ \quad
+ \lambda \xi\{\beta_1 \cdot \xi\eta + \beta_2 \cdot (\xi\eta)^2 +
\cdots   + \beta_{i-1} \cdot (\xi\eta)^{i-1}
+ \beta_1^{(i)}\cdot (\xi\eta)^i) +  \\ \quad
+ \psi_1^{(i)}(\eta,\varepsilon)\cdot \xi(\xi\eta)^i\}  + O(\xi^{i+2}\eta^{i+1}),
\end{array}
\label{can3.4}
\end{equation}
where $\delta_i = \mbox{sign}\;(\lambda\gamma)^i$. Now we take a function
$v_1^{(i)}(x,\varepsilon)$ to {cancel} the expression inside the square brackets in~(\ref{can3.4}),
i.e.,
\begin{equation}
\begin{array}{l}
v_1^{(i)}(\lambda x,\varepsilon) = \delta_i v_1^{(i)}(x,\varepsilon))
-\frac{1}{i+1}\varphi_1^{(i)}(x,\varepsilon)
\end{array}
\label{can3.5}
\end{equation}

Note that this equation has a solution in the class of functions (of variable $x$) whose smoothness
coincides with the smoothness of the function $\varphi_1^{(i)}(x,\varepsilon)$ (recall that
$\varphi_1^{(i)} \in C^{r-2i-1}$). The sought solution, $u = v_1^{(i)}(x,\varepsilon)$, can be
viewed as the equation
of the strong stable invariant manifold
$W^{ss}_i$ containing the point $(0,0)$ of the following planar map
\begin{equation}
\begin{array}{l}
\bar u = \delta_i u - \frac{1}{i+1}\varphi_1^{(i)}(x,\varepsilon) \;\;,\;\; \bar x =
\lambda(\varepsilon)x
\end{array}
\label{can3.6}
\end{equation}
(since  $W^{ss}$ is invariant, its equation $u = \phi_{ss}(x,\varepsilon)$ has to satisfy the
following homological equation:  $\phi_{ss}(\lambda x,\varepsilon) = \delta_i
\phi_{ss}(x,\varepsilon) -\frac{1}{i+1}\varphi_1^{(i)}(x,\varepsilon)$ that is,~(\ref{can3.5}).
Since $\delta_i = \pm 1$, such a manifold exists, it is $C^{r-2i-1}$ and, thus, the change
(\ref{can3.5}) is $C^{r-2i-2}$.

We can see from~(\ref{can3.3}) that the sought change is of the form
$$
x = \xi + O((\xi\eta)^{i+1}) \;,\; y = \eta + O((\xi\eta)^{i+1}).
$$
This means that, in the second equation of~(\ref{can3.2}), such a change can {affect}  only the
function $\lambda^{-1}\varphi_2^{(i)}(x,\varepsilon)x^iy^{i+1}$ from the explicitly shown ones in
(\ref{can3.2}): $\varphi_2^{(i)} \Rightarrow \tilde\varphi_2^{(i)}$~.

Thus, after change~(\ref{can3.3}), the map $T_\varepsilon$ {has} the form~(\ref{can3.2}) where
\begin{equation}
\begin{array}{l}
\varphi_1^{(i)}(x,\varepsilon) \equiv 0 \;,\; \varphi_2^{(i)} \equiv \tilde\varphi_2^{(i)},
\end{array}
\label{can3.75}
\end{equation}
and the other explicitly given functions are the same. Note
that the function $\psi_2^{(i)}(y,\varepsilon)$ does not {change}.

It is evident that the second coordinate transformation, {associated to} the second generating
function $V_2^{(i)} = x\eta + (x\eta)^{i+1} v_2^{(i)}(\eta,\varepsilon)$ with
$v_2^{(i)}(0,\varepsilon)=0$, is carried out quite similarly, 
due to the condition
$|\lambda\gamma|\equiv 1$, see also~\cite{GG09}.

Thus, after the canonical changes with {associated} generating functions $V_1^{(i)}$ and $V_2^{(i)}$ from
(\ref{canv1-v2}),  the map $T_\varepsilon$ takes the following form
\begin{equation}
\begin{array}{l}
\bar x = \lambda(\varepsilon) x\{1 + \beta_1(\varepsilon) \cdot xy
+ ... +
\beta_{i}(\varepsilon) \cdot (xy)^{i}\} 
+ \tilde\psi_1^{(i)}(y,\varepsilon)\cdot x^{i+1}y^i  +
O(x^{i+2}y^{i+1})  \;,\;  \\
\bar y = \gamma(\varepsilon) y\{1 + \tilde\beta_1(\varepsilon) \cdot xy + ...
+ \tilde\beta_{i}(\varepsilon) \cdot (xy)^{i}\} 
+ \tilde\varphi_2^{(i)}(x,\varepsilon)\cdot x^iy^{i+1} +
O(x^{i+1}y^{i+2})
\end{array}
\label{can3.12}
\end{equation}

Let us show that the equality $J(T_\varepsilon) \equiv 1$ implies
$\tilde\psi_1^{(i)} \equiv 0$ and $\tilde\varphi_2^{(i)} \equiv
0$. Indeed, we can represent the map~(\ref{can3.12})  as
\begin{equation}
\begin{array}{l}
\bar x = \lambda(\varepsilon) x B_i(xy) +
\tilde\psi_1^{(i)}(y,\varepsilon)\cdot x^{i+1}y^i  +
O(x^{i+2}y^{i+1})  \;,\;  \\
\bar y = \gamma(\varepsilon) y B_i^{-1}(xy) \tilde\varphi_2^{(i)}(x,\varepsilon)\cdot x^iy^{i+1} +
O(x^{i+1}y^{i+2})
\end{array}
\label{can3.13}
\end{equation}
where $B_i$ and $B_i^{-1}$ are the {truncations} of the Bikhoff-Moser normal form. Then the Jacobian of~(\ref{can3.13}) has the following form
$$
J = \pm 1 + (i+1)(\lambda\tilde\varphi_2^{(i)}(x,\varepsilon) +
\gamma\tilde\psi_1^{(i)}(y,\varepsilon))\cdot x^{i}y^i  + O((xy)^{i+1}),
$$
{from which} it follows that $\tilde\varphi_2^{(i)}\equiv 0$ and $\tilde\psi_1^{(i)}\equiv 0$.

In the non-orientable case $\lambda\gamma = -1$, the monomials of the form
$\beta_i x (xy)^i$ in the equation for $\bar x$ and $\tilde\beta_i y(xy)^i$ in the equation for $\bar y$ with odd $i$ are {non}-resonant.
Therefore, they can be {cancelled} (inside
every corresponding  step of the proof) by the canonical polynomial coordinate transformations with
generating functions $\tilde V_i = x\eta + \nu_i (x\eta)^{i+1}$. One can check that if
in~(\ref{can3.2}) all terms~$\beta_i$ and~$\tilde\beta_i$ vanish for odd $i$, except for the
last ones  $\beta_n$ and $\tilde\beta_n$ for odd $n$, then $\beta_n = -\tilde\beta_n$. Then the
change with the generating functions $\tilde V_n$ {cancels} both these terms simultaneously.

This completes the proof of the lemma.

\subsection*{Acknowledgments}

The authors would like to thank D.V.~Turaev and L.M.~Lerman for very fruitful discussions. The authors also thank
the two anonymous referees for the careful reading of the manuscript and the valuable comments and corrections which greatly improved
the final version of the paper.
This work has been partially supported by the Russian Scientific Foundation
Grant 14-41-00044.
Sections~5--7 has been carried out by the RSciF-grant (project No.14-12-00811). SG was partially supported
by the RFBR grants 13-01-00589 and 14-01-00344.
AD and MG have been
partially supported by the Spanish MINECO-FEDER Grant MTM2012-31714 and the Catalan
Grant 2014SGR504. MG has also been supported by the DFG~Collaborative Research Center TRR~109 ``Discretization in
Geometry and Dynamics''.

\end{document}